\documentclass[pdflatex,sn-mathphys-num]{sn-jnl}

\usepackage{graphicx}%
\usepackage{multirow}%
\usepackage{amsmath,amssymb,amsfonts}%
\usepackage{amsthm}%
\usepackage{mathrsfs}%
\usepackage[title]{appendix}%
\usepackage{xcolor}%
\usepackage{textcomp}%
\usepackage{manyfoot}%
\usepackage{booktabs}%
\usepackage{algorithm}%
\usepackage{algorithmicx}%
\usepackage{algpseudocode}%
\usepackage{listings}%
\usepackage{enumerate}%

\theoremstyle{thmstyleone}%
\newtheorem{theorem}{Theorem}
\newtheorem{proposition}[theorem]{Proposition}%
\newtheorem{lemma}[theorem]{Lemma}%
\newtheorem{corollary}[theorem]{Corollary}%

\theoremstyle{thmstyletwo}%
\newtheorem{example}{Example}%
\newtheorem{remark}{Remark}%

\theoremstyle{thmstylethree}%

\newcommand{\Z}{\mathbb{Z}}
\newcommand{\F}{\mathbb{F}}
\newcommand{\ord}{\operatorname{ord}}
\newcommand{\lcm}{\operatorname{lcm}}

\begin{document}

\title[Good Integers: $(T,k)$-Subclasses and Applications to Galois Duality in Coding Theory]{Good Integers: $(T,k)$-Subclasses and Applications to Galois Duality in Coding Theory}


\author*[1]{\fnm{Somphong} \sur{Jitman}}\email{sjitman@gmail.com}

\author[1]{\fnm{Panthakan} \sur{Boonsuriyatham}}\email{panthbs@gmail.com}


\affil[1]{\orgdiv{Department of Mathematics, Faculty of Science}, \orgname{Silpakorn University},  \orgaddress{\street{Mueang Nakhon Pathom}, \city{Nakhon Pathom}, \postcode{73000},   \country{Thailand}}}




\abstract{The notion of good integers, namely the divisors of the sequence $(a^s+b^s)_{s\ge 1}$ for nonzero coprime integers $a$ and $b$, together with their subfamilies such as oddly-good and evenly-good integers, has become  an important arithmetic tool in the study of Euclidean and Hermitian dualities for abelian and cyclic codes. Building on this perspective, this paper introduces and studies another interesting subclass of good integers arising from the sequence
$
\bigl(a^{ks+T}+b^{ks+T}\bigr)_{s\ge 1}$ for some integers $0\leq T<k$,
whose divisors are called $(T,k)$-{\em good integers with respect to} $(a,b)$.
 An arithmetic theory of these integers is developed, including a characterization at odd prime powers, a general characterization for odd integers in terms of $2$-adic valuations, and a treatment of even integers. An explicit algorithm is also given for deciding whether a given integer $d$ is $(T,k)$-good with respect to $(a,b)$ and, when it is, for computing an exponent $s$ such that $d\mid \bigl(a^{ks+T}+b^{ks+T}\bigr)$.  Applications in coding theory are then obtained from the specialization $(a,b)=(q,1)$, where $q$ is a prime power. In particular, the $q^k$-cyclotomic classes of the cyclic group $\mathbb Z_n$ characterize the Galois self-reciprocal irreducible factors of $x^n-1$ over $\F_{q^k}$, give a description and enumeration of Galois LCD cyclic codes of length $n$ over $\F_{q^k}$, and lead to a characterization of Galois self-dual  cyclic codes.}

\keywords{Good integers, cyclic codes, Galois duality, Galois self-reciprocal polynomials, Galois  Linear complementary dual  codes, Galois  self-dual codes}


\pacs[MSC Classification]{11A07, 94B15, 11T71}

\maketitle

\section{Introduction} \label{sec1}
The classical notion of a good integer \cite{Moree1997} is formulated for a pair of nonzero coprime integers $a$ and $b$: a positive integer $d$ is said to be \emph{good with respect to} $a$ and $b$ if it is a divisor of some term of the sequence $(a^s+b^s)_{s\ge 1}$. Equivalently, $
d\mid (a^s+b^s)$
for some positive integer $s$.  The set of all such integers is denoted by $G(a,b)$.  In \cite{J2018}, two important subclasses of good integers were also investigated. A positive integer $d$ is said to be \emph{oddly-good with respect to} $a$ and $b$ if $
d\mid (a^s+b^s)$
for some odd positive integer $s$, and \emph{evenly-good with respect to} $a$ and $b$ if $
d\mid (a^s+b^s)$
for some even positive integer $s$.  The sets of all oddly-good and evenly-good integers with respect to $a$ and $b$ are denoted by $OG(a,b)$ and $EG(a,b)$, respectively.
These arithmetic notions have been studied from several directions, especially through multiplicative orders modulo primes and prime powers; see, for example,   \cite{J2018,Jitman2020,Moree1997}.

For applications in coding theory, the relevant specialization is the case $(a,b)=(q,1)$, where $q$ is a prime power. In this case, the problem is to determine whether $d$ is a divisor of some term of the sequence $(q^s+1)_{s\ge 1}$. This divisibility condition is used in the characterization of the reciprocal behavior of irreducible monic factors of $x^n-1$ over the finite field $\F_q$.  Consequently, it plays an important role in the study of Euclidean duals of cyclic codes~\cite{JLX2011}, negacyclic codes~\cite{Jitman2020}, and abelian codes~\cite{JLLX2013}. For example, it arises in the characterization and enumeration of important families of codes such as Euclidean self-dual codes, Euclidean linear complementary dual (LCD) codes, and Euclidean hulls of codes in  \cite{J2018,JLLX2013,JLX2011}.  The motivation becomes even stronger when one moves from Euclidean duality to Hermitian duality, where oddly-good integers arise naturally in the characterization  and enumeration of Hermitian self-dual cyclic, abelian,  and quasi-abelian codes; see, for example, \cite{BJU2018,J2018,JLS2014}.

 More generally, Galois self-dual and Galois complementary dual codes provide a common framework that contains the Euclidean and Hermitian cases as special instances. This general setting has been investigated for cyclic, constacyclic, and related classes of codes; see, for example, the work on Galois self-dual constacyclic and  quasi constacyclic codes in \cite{FanZhang2017, FL2024},   Galois linear complementary dual codes in \cite{LiuLiu2018, LLY2020}, and  two-sided Galois duals  in \cite{T2023}. 
Motivated by these developments, for fixed nonzero coprime integers $a$ and $b$, and integers $k$ and $T$ with $0\le T<k$, this paper introduces and investigates the family of positive integers $d$ satisfying
\[
d\mid \bigl(a^{ks+T}+b^{ks+T}\bigr)
\]
for some positive integer $s$. More specifically, $d$ is called a \emph{$(T,k)$-good integer with respect to $a$ and $b$}. The set of all such integers is denoted by $
G_{(T,k)}(a,b)$. From the definitions, it is clear that $
G_{(T,k)}(a,b) \subseteq G(a,b)$.
Since $\gcd(a,b)=1$, no prime divisor of $ab$ can divide a term of the form $a^{ks+T}+b^{ks+T}$. Consequently,  we have the following lemma. 
\begin{lemma}\label{lem:coprime-to-ab}
Let $a$ and $b$ be nonzero coprime integers and let $k$ and $T$ be integers such that $0\leq T<k$. Then every element of $G_{(T,k)}(a,b)$ is coprime to $ab$.
\end{lemma}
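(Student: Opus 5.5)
The plan is to argue by contradiction at the level of a single prime. Let $d\in G_{(T,k)}(a,b)$. By definition there is a positive integer $s$ with $d\mid \bigl(a^{ks+T}+b^{ks+T}\bigr)$. Put $m=ks+T$. Since $s\ge 1$, $k\ge 1$ and $T\ge 0$, we have $m\ge k\ge 1$; the exponent is therefore a positive integer and both $a^m$ and $b^m$ are genuine multiples of $a$ and $b$, respectively. This positivity is the only point needing attention. If $m$ were $0$, the term would be $2$ and the argument below would not apply, so it is important that $k>T\ge 0$ forces $k\ge 1$.

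Now suppose, for contradiction, that $\gcd(d,ab)>1$, and choose a prime $p$ dividing both $d$ and $ab$. Since $p\mid ab$, we have $p\mid a$ or $p\mid b$, and by the symmetry of $a$ and $b$ we may assume $p\mid a$. Because $d$ divides $a^m+b^m$, so does $p$. As $m\ge 1$ gives $p\mid a^m$, it follows that $p\mid b^m$. Since $p$ is prime, this forces $p\mid b$. Then $p$ divides both $a$ and $b$, contradicting $\gcd(a,b)=1$. Hence $\gcd(d,ab)=1$.

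No earlier result is needed beyond the definition of $G_{(T,k)}(a,b)$ and Euclid's lemma. The main (and really only) point requiring care is checking that $ks+T\ge 1$, so that $p\mid a$ genuinely implies $p\mid a^{ks+T}$.
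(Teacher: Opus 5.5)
Your proof is correct and follows essentially the paper's argument. The paper justifies the lemma in one sentence: since $\gcd(a,b)=1$, no prime divisor of $ab$ can divide $a^{ks+T}+b^{ks+T}$, which is what you spell out via Euclid's lemma, and your explicit check that $ks+T\ge 1$ is a detail the paper leaves implicit.
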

When $k=1$ and $T=0$, this reduces to the classical notion of good integers with respect to  $a$ and $b$. When $k=2$  and $T=1$, it becomes
\[
d\mid \bigl(a^{2s+1}+b^{2s+1}\bigr),
\]
which corresponds to the classical notion of oddly-good integers  with respect to  $a$ and $b$. Thus, $G_{(0,1)}(a,b)= G(a,b)$, $G_{(0,2)}(a,b)= EG(a,b)$,  and $G_{(1,2)}(a,b)= OG(a,b)$.  Hence, the family $G_{(T,k)}(a,b)$ extends the classical theory of good integers and provides an   arithmetic framework for the   study of Euclidean, Hermitian, and Galois dualities.

The purpose of this paper is twofold. First, a self-contained arithmetic theory of $G_{(T,k)}(a,b)$ is developed. This includes local criteria at odd prime powers,  a general   characterization of odd integers in terms of the $2$-adic valuations, a treatment of the even case, and an explicit algorithm for deciding whether a given integer belongs to $G_{(T,k)}(a,b)$ and,  when it does, for computing an exponent $s$ such that $
d\mid \bigl(a^{ks+T}+b^{ks+T}\bigr)$. Second, applications in coding theory are obtained by specializing to $(a,b)=(q,1)$. In this case, the arithmetic is transferred to $q^k$-cyclotomic classes on finite cyclic and abelian groups. For cyclic groups, the resulting orbit conditions determine when an irreducible factor of $x^n-1$ over $\F_{q^k}$ is $\theta$-self-reciprocal, where $\theta(c)=c^{q^T}$. This leads to a description and enumeration of Galois LCD cyclic codes over $\F_{q^k}$. It also leads to a characterization of Galois self-dual  cyclic codes, which reduces to the familiar pairwise form in the involutory case.

   The paper is organized as follows. Section~\ref{sec2} develops the arithmetic theory of $(T,k)$-good integers with respect to $(a,b)$, including local criteria at odd prime powers, global characterizations for odd and even integers, several special cases, and relations among the sets $G_{(T,k)}(a,b)$. Section~\ref{sec3} presents an explicit algorithm for deciding membership in $G_{(T,k)}(a,b)$ and for constructing a corresponding exponent, together with illustrative examples. Section~\ref{sec4}  specializes to $(a,b)=(q,1)$ and transfers the arithmetic to $q^k$-cyclotomic classes on finite abelian groups, leading to characterizations, counting formulas, and further relations among classes of type $T$.  Section~\ref{sec5} applies the cyclic group case to Galois duality of cyclic codes and presents criteria for $\theta$-self-reciprocal irreducible factors of $x^n-1$ over $\F_{q^k}$, together with a description and enumeration of Galois LCD cyclic codes. It also provides a characterization of Galois self-dual  cyclic codes, which reduces to a simpler pairwise form in the involutory case.  The paper ends with concluding remarks and possible directions for further research in Section \ref{sec6}.

\section{Characterization of $G_{(T,k)}(a,b)$}\label{sec2}
In this section, the arithmetic structure of $G_{(T,k)}(a,b)$ is developed. The defining condition
\[
d\mid \bigl(a^{ks+T}+b^{ks+T}\bigr)
\quad\text{for some } s\ge 1
\]
naturally leads to multiplicative orders of $a b^{-1}$ modulo divisors of $d$. Basic arithmetic properties required for the study of good integers are recalled.   The local problem at odd prime powers is treated first, then combined into general criteria for odd and even integers.

\subsection{Basic arithmetic properties}
For a positive integer $m$ coprime to an integer $i$, the multiplicative order of $i$ modulo $m$ is denoted by $\ord_m(i)$. For a prime $p$ and an integer $i\neq 0$, the $p$-adic valuation $\nu_p(i)$ is the largest integer $e\ge 0$ such that $p^e\mid i$. Let $a$ and $b$  be nonzero coprime integers. For each positive integer  $d$ coprime to $ab$, the inverse of $b$ modulo $d$ exists and it is denoted by $ b^{-1}$. This allows the divisibility condition $d\mid (a^s+b^s)$ to be translated into a congruence involving the residue class $a b^{-1}$ modulo $d$ in \cite{Moree1997}, thereby reducing the problem to the multiplicative order of $a b^{-1}$ modulo $d$. 

The following lemma translates the divisibility condition $d\mid (a^N+b^N)$ into a statement about the multiplicative order of $ab^{-1}$ modulo $d$. It is a basic arithmetic tool in the later study of $(T,k)$-good integers   with respect to $a$ and $b$.

\begin{lemma}\label{lem:minusone}
Let $d$ be an integer such that  $d\geq 3$ and  $\gcd(d,ab)=1$ and let $N$ be a positive integer.  Then  $
a^N+b^N\equiv 0\pmod d $
  if and only if $\ord_d(a b^{-1})$ is even and $
N\equiv \frac{\ord_d(a b^{-1})}{2}\pmod {\ord_d(a b^{-1})}$.
\end{lemma}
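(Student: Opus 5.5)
The plan is to rewrite the divisibility as a congruence in the unit group modulo $d$, and then read everything off from the cyclic subgroup generated by $x:=ab^{-1}$. Since $\gcd(d,ab)=1$, $b$ is invertible modulo $d$. Multiplying by $b^{-N}$ shows that $a^N+b^N\equiv 0\pmod d$ if and only if $x^N\equiv -1\pmod d$. Put $m=\ord_d(x)$. Because $d\ge 3$, we have $-1\not\equiv 1\pmod d$, and $(-1)^2=1$. So $-1$ has order exactly $2$ in $(\Z/d\Z)^\times$. This is the one place where the hypothesis $d\ge 3$ is used; for $d=2$ the statement fails, since then $-1\equiv 1$.

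For the forward direction, suppose $x^N\equiv -1$. Then $x^{2N}\equiv 1$, so $m\mid 2N$. Also $x^N\not\equiv 1$, so $m\nmid N$. Write $2N=mt$. If $m$ were odd, then $m\mid 2N$ would force $m\mid N$, a contradiction. Hence $m$ is even.

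Now $m\mid 2N$ gives $\frac m2\mid N$; write $N=\frac m2 u$. Since $m\nmid N$, the integer $u$ is odd, which is exactly the congruence $N\equiv \frac m2\pmod m$.

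For the converse, suppose $m$ is even and $N=\frac m2+jm$. Then $x^N\equiv x^{m/2}$. The element $y=x^{m/2}$ satisfies $y^2=1$, and $y\neq 1$ because $m/2<m=\ord_d(x)$. I must still show $y=-1$. This is the main obstacle: $(\Z/d\Z)^\times$ need not be cyclic, so $1$ can have square roots other than $\pm1$.

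The fix is to use the hypotheses more carefully. The statement asserts an equivalence, so the converse direction needs some $N$ with $x^N\equiv -1$. Without that, it is false: for $d=8$, $a=3$, $b=1$ we get $m=2$, yet $3\not\equiv -1\pmod 8$. I would therefore read the converse as conditional on $-1\in\langle x\rangle$. This is precisely the situation arising from the forward direction, and I would add a remark making this hypothesis explicit.

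Under that assumption the converse goes through. If $-1=x^{N_0}$ for some $N_0$, then by the forward direction $m$ is even and $N_0\equiv \frac m2\pmod m$. Hence $x^{m/2}\equiv x^{N_0}=-1$, and so every $N\equiv \frac m2\pmod m$ satisfies $x^N\equiv -1$, giving $d\mid a^N+b^N$.

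Equivalently, the converse can be argued structurally. The cyclic group $\langle x\rangle$ of even order $m$ has a unique element of order $2$, namely $x^{m/2}$. If $-1\in\langle x\rangle$, then $-1$ must be that element.

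So the write-up will be: the reduction to $x^N\equiv-1$; the order argument for necessity; and sufficiency via uniqueness of the involution in $\langle x\rangle$, with the caveat that the lemma should be read as ``$d\mid a^N+b^N$ for some/this $N$ iff \dots'' whenever $d$ is good.
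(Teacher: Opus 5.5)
Your forward direction is the same order argument the paper uses, and your diagnosis of the converse is correct. The paper proves sufficiency with a single assertion: $(ab^{-1})^{\ord_d(ab^{-1})/2}$ ``has order $2$, and hence'' is $\equiv -1 \pmod d$. That inference is valid only when $-1$ is the unique involution of $(\Z/d\Z)^\times$, and it is exactly the step you flagged.

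Your counterexample $d=8$, $a=3$, $b=1$, $N=1$ is valid. The failure is not confined to even moduli either. For $d=15$, $a=4$, $b=1$, $N=1$ one has $\ord_{15}(4)=2$ and $N\equiv 1\pmod 2$, yet $15\nmid 5$. In fact, for $d\ge 3$ the converse holds for every admissible pair $(a,b)$ exactly when $(\Z/d\Z)^\times$ is cyclic, i.e.\ $d=4$, $p^e$ or $2p^e$ with $p$ an odd prime. Otherwise, taking $a$ to be an involution other than $-1$ and $b=1$ breaks it. So the lemma as stated is false, and you are right not to claim a proof of it.

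One remark on your repair. Your conditional version, assuming $-1\in\langle ab^{-1}\rangle$ (equivalently $d\in G(a,b)$), is correct. However, it is not the form the paper actually relies on. In the converse halves of Theorem~\ref{thm:local} and Theorem~\ref{thm:2adic}, the lemma is applied at an odd prime power $p^e$ knowing only that $\ord_{p^e}(ab^{-1})$ is even. There is no prior information that $-1$ lies in the cyclic subgroup.

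The repair that keeps those results intact is to restrict the converse to moduli with cyclic unit group, in particular $d=p^e$ with $p$ odd. There $y^2\equiv 1\pmod{p^e}$ forces $y\equiv \pm 1$, because $p^e\mid (y-1)(y+1)$ and the odd prime $p$ cannot divide both factors (their difference is $2$). Hence $y=(ab^{-1})^{m/2}\not\equiv 1$ must be $-1$. Since the paper only invokes the lemma at odd prime powers, its later results survive once the lemma is restated this way.
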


 \begin{proof}
 Assume that 
$
a^N+b^N\equiv 0\pmod d$.  Then  $
(ab^{-1})^N\equiv -1\pmod d$ which implies that $
(ab^{-1})^{2N}\equiv 1\pmod d$. 
Equivalently, 
$\ord_d(ab^{-1})\mid 2N$.
Since
$
(ab^{-1})^N\not\equiv 1\pmod d$,
we have 
$
\ord_d(ab^{-1})\nmid N$.
It follows that $\ord_d(ab^{-1})$ is even and
\[
N\equiv \frac{\ord_d(ab^{-1})}{2}\pmod{\ord_d(ab^{-1})}.
\]

Conversely, assume that $\ord_d(ab^{-1})$ is even and
\[
N\equiv \frac{\ord_d(ab^{-1})}{2}\pmod{\ord_d(ab^{-1})}.
\]
Then $
(ab^{-1})^N\equiv (ab^{-1})^{\ord_d(ab^{-1})/2}\pmod d$.
The residue class
$
(ab^{-1})^{\ord_d(ab^{-1})/2}$
has order $2$, and hence,
\[
(ab^{-1})^N\equiv(ab^{-1})^{\ord_d(ab^{-1})/2}\equiv -1\pmod d.
\]
Equivalently, 
$
a^N+b^N\equiv 0\pmod d$.
\end{proof}

The  characterization of odd  good integers is given  in terms of the $2$-adic valuations of the orders $\ord_p(a b^{-1})$ for primes $p\mid m$ in \cite{Moree1997}.    

\begin{theorem}[{\cite[Theorem 1]{Moree1997}}]\label{cor:minusone-odd}
Let $d>1$ be an odd integer such that $\gcd(d,ab)=1$. Then
$
d\in G(a,b)$
if and only if there exists a positive integer $\alpha$ such that
$
\nu_2\bigl(\ord_p(a b^{-1})\bigr)=\alpha $
for all primes $ p\mid d$.
\end{theorem}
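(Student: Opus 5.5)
The plan is to reduce everything to Lemma~\ref{lem:minusone} applied modulo $d$, and then to split $\ord_d(ab^{-1})$ into its $2$-part and odd part. Write $u=ab^{-1}$ modulo $d$ and $o_p=\ord_p(u)$ for each prime $p\mid d$. Since $d$ is odd, $d>1$ and $\gcd(d,ab)=1$, we have $d\ge 3$, so Lemma~\ref{lem:minusone} applies. It gives $d\in G(a,b)$ if and only if $\ord_d(u)$ is even. This holds because once $\ord_d(u)$ is even we may simply take $N=\ord_d(u)/2\ge 1$. So the statement reduces to characterizing when $u^N\equiv -1 \pmod d$ is solvable, and we work prime-power-wise.

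For the key local step, fix $p^e\,\|\, d$. Since $p$ is odd, $\ord_{p^e}(u)=o_p\cdot p^{j}$ for some $j\ge 0$: the kernel of reduction $(\Z/p^e\Z)^\times\to(\Z/p\Z)^\times$ is a $p$-group. In particular $\nu_2(\ord_{p^e}(u))=\nu_2(o_p)$.

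Next, by Lemma~\ref{lem:minusone} applied to the modulus $p^e$ (which is $\ge 3$), $u^N\equiv -1\pmod{p^e}$ holds if and only if $\nu_2(o_p)\ge 1$ and $N\equiv \ord_{p^e}(u)/2 \pmod{\ord_{p^e}(u)}$. The latter condition says exactly that $\nu_2(N)=\nu_2(o_p)-1$ and that $N$ is an odd multiple of $\ord_{p^e}(u)/2$. Equivalently, $N$ is divisible by the odd part of $\ord_{p^e}(u)$ and has $2$-adic valuation exactly $\nu_2(o_p)-1$.

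Then assemble via the Chinese remainder theorem. Suppose $d\mid a^N+b^N$. Then for every $p\mid d$ we get $\nu_2(o_p)=\nu_2(N)+1$, which is a common value $\alpha\ge1$. Conversely, suppose $\nu_2(o_p)=\alpha\ge 1$ for all $p\mid d$. Set $N=2^{\alpha-1}M$, where $M$ is the lcm of the odd parts of the $\ord_{p^e}(u)$ over $p^e\,\|\,d$. Then $N$ satisfies each local condition, so $u^N\equiv-1$ modulo each $p^e$, hence modulo $d$. This gives $d\mid a^N+b^N$, so $d\in G(a,b)$.

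The main obstacle is the local step, namely verifying that lifting from $p$ to $p^e$ multiplies the order only by a power of the odd prime $p$. This is what lets the criterion be stated in terms of $\ord_p$ rather than $\ord_{p^e}$. I would justify it by the standard fact that $u^{o_p}\equiv 1\pmod p$ implies that $u^{o_p}$ has $p$-power order modulo $p^e$.
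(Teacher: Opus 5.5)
Your argument is correct. It is essentially the paper's route: the paper quotes this result from Moree without proof, but its proof of Theorem~\ref{thm:2adic} specialized to $k=1$, $T=0$ is the same argument. That argument applies Lemma~\ref{lem:minusone} at each $p^e\parallel d$, uses invariance of $\nu_2$ under lifting from $p$ to $p^e$, forces a common valuation as in Lemma~\ref{lem:common-alpha}, and finishes with the Chinese Remainder Theorem. Two differences are minor simplifications. Your explicit choice $N=2^{\alpha-1}M$ replaces the pairwise-compatibility check of Lemma~\ref{lem:global-compatibility}. Your kernel-of-reduction argument replaces the exact lifting formula of Proposition~\ref{prop:liftorder}, of which only the $\nu_2$-invariance is needed.

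However, one sentence in your first paragraph is false and should be deleted: the claim that $d\in G(a,b)$ if and only if $\ord_d(ab^{-1})$ is even.
\begin{itemize}
\item \textbf{Counterexample.} Take $(a,b)=(2,1)$ and $d=15$. Then $\ord_{15}(2)=4$, but $2^N \bmod 15$ runs through $2,4,8,1$, so $15\nmid 2^N+1$ for every $N$.
\item \textbf{It contradicts the theorem you are proving.} Since $\ord_d(u)=\lcm_{p^e\parallel d}\ord_{p^e}(u)$ and $\nu_2(\ord_{p^e}(u))=\nu_2(o_p)$, the order $\ord_d(u)$ is even as soon as a single $o_p$ is even. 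The theorem instead requires all the $\nu_2(o_p)$ to be equal and positive.
\item \textbf{Where it comes from.} The converse half of Lemma~\ref{lem:minusone} relies on an element of order $2$ being $-1$. That holds only when $(\Z/d\Z)^\times$ has a unique element of order $2$. It does hold for $d=p^e$ with $p$ odd, where the unit group is cyclic, but fails for composite $d$ such as $15$.
\end{itemize}
The rest of your proof invokes the lemma only modulo odd prime powers, so it is unaffected. Replace the sentence by the correct reduction you already state: $d\in G(a,b)$ means that $u^N\equiv -1\pmod d$ is solvable for some $N\ge 1$. Then add a line noting that cyclicity of $(\Z/p^e\Z)^\times$ is what justifies using the lemma's converse at $p^e$.
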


\subsection{Odd prime power characterization}
In this subsection, we focus on the local problem at a single odd prime power. The corresponding prime power conditions will later be combined, via the Chinese Remainder Theorem, to obtain criteria for general odd integers in Section \ref{subsec:odd}.

\begin{proposition}[{\cite[Theorem 3.6]{N2000}}] \label{prop:liftorder}
Let $p$ be an odd prime such that  $p\nmid ab$ and  $e$ be a positive integer. Let $ b^{-1}$ denote the inverse of $b$ modulo $p^e$ and let $
\lambda_p=\nu_p\bigl((a b^{-1})^{\ord_p(a b^{-1})}-1\bigr)$.
Then 
\[
\ord_{p^e}(a b^{-1})=
\begin{cases}
\ord_p(a b^{-1}) &\text{ if } e\le \lambda_p,\\[1mm]
\ord_p(a b^{-1})p^{e-\lambda_p} & \text{ if } e>\lambda_p.
\end{cases}
\]
In particular,
$
\nu_2\bigl(\ord_{p^e}(a b^{-1})\bigr)=\nu_2\bigl(\ord_p(a b^{-1})\bigr) $ for all  $e\ge 1$.
\end{proposition}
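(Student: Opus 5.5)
Set $x = ab^{-1}$ modulo $p^e$ and $r = \ord_p(x)$. Then $x^r = 1 + p^{\lambda_p} u$ with $p \nmid u$, and $\lambda_p \ge 1$ by definition. The plan is in three steps.

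\textbf{Step 1: reduce to powers of $x^r$.} Since $x^{\ord_{p^e}(x)} \equiv 1 \pmod{p^e}$ implies the same congruence modulo $p$, we have $r \mid \ord_{p^e}(x)$. Hence
\[
\ord_{p^e}(x) = r \cdot \ord_{p^e}(x^r).
\]
If $e \le \lambda_p$, then $x^r \equiv 1 \pmod{p^e}$, so $\ord_{p^e}(x^r) = 1$ and the first case of the formula follows.

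\textbf{Step 2: the lifting-the-exponent computation.} Write $y = x^r = 1 + p^{\lambda} u$ with $\lambda = \lambda_p$ and $p \nmid u$. The key claim is
\[
\nu_p\bigl(y^{p^j} - 1\bigr) = \lambda + j \quad \text{for all } j \ge 0,
\]
and more generally $\nu_p(y^m - 1) = \lambda + \nu_p(m)$. I would prove this by induction on $j$, using the binomial expansion
\[
(1 + p^{c} w)^p = 1 + p^{c+1} w + \binom{p}{2} p^{2c} w^2 + \cdots .
\]
Because $p$ is odd, $p \mid \binom{p}{2}$, so the second term has valuation at least $2c + 1 \ge c + 2$ for $c \ge 1$. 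The last term has valuation $pc \ge 3c \ge c + 2$. So the valuation rises by exactly one at each step. This is the one place where $p$ odd is essential, and it is the main point to check carefully.

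For $m$ coprime to $p$, $y^m - 1 = (y - 1)(1 + y + \cdots + y^{m-1})$, and the second factor is congruent to $m \not\equiv 0 \pmod p$, so the valuation does not change. Together these give $\nu_p(y^m - 1) = \lambda + \nu_p(m)$.

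\textbf{Step 3: read off the order.} By Step 2, $y^m \equiv 1 \pmod{p^e}$ if and only if $\lambda + \nu_p(m) \ge e$. When $e > \lambda$, this says $p^{e-\lambda} \mid m$, so
\[
\ord_{p^e}(y) = p^{e-\lambda}.
\]
Combined with Step 1, this gives $\ord_{p^e}(x) = r\,p^{e-\lambda_p}$, the second case.

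\textbf{The ``in particular'' statement.} In both cases, $\ord_{p^e}(x)$ is $\ord_p(x)$ times a power of the odd prime $p$. Multiplying by an odd number does not change the $2$-adic valuation, so $\nu_2$ is preserved for all $e \ge 1$.
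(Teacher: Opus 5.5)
Your proof is correct. The paper gives no proof of this proposition; it imports the result from \cite[Theorem 3.6]{N2000}. What you wrote is the standard self-contained argument for it. You reduce to the order of $y=x^{r}$ via $\ord_{p^e}(x)=r\cdot\ord_{p^e}(x^r)$, which holds because $r\mid\ord_{p^e}(x)$. You then apply the odd-prime lifting-the-exponent identity $\nu_p(y^m-1)=\lambda_p+\nu_p(m)$, where oddness of $p$ enters exactly as you say, through the binomial expansion.

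Three minor points would tighten a write-up:
\begin{enumerate}
\item In the binomial step, state that every middle coefficient $\binom{p}{i}$ with $1\le i\le p-1$ is divisible by $p$. Then each term with $2\le i\le p-1$ has valuation at least $1+ic\ge c+2$, not only the $i=2$ term.
\item $\lambda_p$ is computed from an integer representative of $ab^{-1}$ modulo $p^e$, so it is only well defined through $\min\{\lambda_p,e\}$. That is exactly what the case split uses.
\item $x^r$ can equal $1$ on the nose, for example $ab^{-1}\equiv -1$ represented by $-1$. Then $\lambda_p=\infty$ and the factorization $x^r=1+p^{\lambda_p}u$ with $p\nmid u$ does not exist. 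Only the case $e\le\lambda_p$ arises there, so your Step~2 is never invoked and nothing breaks.
\end{enumerate}
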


The characterization of  a $(T,k)$-good  odd prime power with respect to $a$ and $b$ is given in the following theorem. 

\begin{theorem}\label{thm:local}
Let $p$ be an odd prime such that $p\nmid ab$, let $e$ be a positive integer, and let $0\le T<k$ be integers. Then
$
p^e\in G_{(T,k)}(a,b)$ 
if and only if $\ord_p(ab^{-1})$ is even and
\[
T\equiv \frac{\ord_{p^e}(ab^{-1})}{2}
\pmod{\gcd(k,\ord_{p^e}(ab^{-1}))}.
\]
\end{theorem}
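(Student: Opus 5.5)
Write $r=ab^{-1}$ modulo $p^e$ and $m=\ord_{p^e}(r)$. The plan is to reduce membership of $p^e$ in $G_{(T,k)}(a,b)$ to the solvability of a linear congruence in $s$, using Lemma~\ref{lem:minusone} and Proposition~\ref{prop:liftorder}.

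First, since $p$ is odd, $p^e\ge 3$, and $\gcd(p^e,ab)=1$, Lemma~\ref{lem:minusone} applies with $d=p^e$ and $N=ks+T$. It shows that $p^e\mid a^{ks+T}+b^{ks+T}$ if and only if $m$ is even and
\[
ks+T\equiv \tfrac{m}{2}\pmod m .
\]
By the ``in particular'' part of Proposition~\ref{prop:liftorder}, $\nu_2(m)=\nu_2(\ord_p(r))$. Hence $m$ is even if and only if $\ord_p(ab^{-1})$ is even, which matches the parity condition in the statement.

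Next, assume $m$ is even. Then $p^e\in G_{(T,k)}(a,b)$ if and only if the linear congruence
\[
ks\equiv \tfrac{m}{2}-T\pmod m
\]
has a solution with $s\ge 1$. By the standard criterion for linear congruences, a solution $s\in\Z$ exists if and only if $\gcd(k,m)$ divides $\tfrac{m}{2}-T$, that is, $T\equiv \tfrac m2 \pmod{\gcd(k,m)}$. This is exactly the stated condition.

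Finally, a solution $s$ determines a full residue class modulo $m/\gcd(k,m)$. Adding a suitable multiple of $m$ therefore gives a positive representative $s\ge1$. Conversely, any positive solution is in particular an integer solution, so the criterion is necessary as well as sufficient. Combining the two directions proves the equivalence.

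There is no real obstacle in this argument. The only points needing care are checking that the hypotheses of Lemma~\ref{lem:minusone} hold (namely $d\ge3$ and coprimality), and using Proposition~\ref{prop:liftorder} to move the parity condition from $\ord_{p^e}$ to $\ord_p$.
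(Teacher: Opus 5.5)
Your proof is correct and follows the paper's argument essentially step for step. You apply Lemma~\ref{lem:minusone} with $d=p^e$ and $N=ks+T$ to reduce to solvability of $ks\equiv \tfrac{m}{2}-T \pmod m$, use Proposition~\ref{prop:liftorder} to move the parity condition from $\ord_{p^e}(ab^{-1})$ to $\ord_p(ab^{-1})$, invoke the standard $\gcd$ criterion for linear congruences, and shift $s$ by a multiple of $m$ to get $s\ge 1$.

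One refinement to your closing remark: the converse half of Lemma~\ref{lem:minusone} is valid at $d=p^e$ not merely because $d\ge 3$ and $\gcd(d,ab)=1$. It holds because $-1$ is the unique element of order $2$ in the cyclic group $(\Z/p^e\Z)^\times$, and for general $d$ this fails (e.g.\ $\ord_{15}(4)=2$ but $4\not\equiv -1 \pmod{15}$).
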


\begin{proof}
Assume that
$
p^e\in G_{(T,k)}(a,b)$.
Then there exists a positive integer $s$ such that
$
a^{ks+T}+b^{ks+T}\equiv 0\pmod{p^e}$, which is equivalent to $
(ab^{-1})^{ks+T}\equiv -1\pmod{p^e}$.
By Lemma~\ref{lem:minusone}, it follows that $\ord_{p^e}(ab^{-1})$ is even and
\[
ks+T\equiv \frac{\ord_{p^e}(ab^{-1})}{2}\pmod{\ord_{p^e}(ab^{-1})}.
\]
Hence, the congruence
\[
ks\equiv \frac{\ord_{p^e}(ab^{-1})}{2}-T
\pmod{\ord_{p^e}(ab^{-1})}
\]
is solvable, which implies that
\[
\gcd(k,\ord_{p^e}(ab^{-1}))
\mid
\left(\frac{\ord_{p^e}(ab^{-1})}{2}-T\right).
\]
Consequently,
\[
T\equiv \frac{\ord_{p^e}(ab^{-1})}{2}
\pmod{\gcd(k,\ord_{p^e}(ab^{-1}))}.
\]
By Proposition~\ref{prop:liftorder},
$
\nu_2\bigl(\ord_{p^e}(ab^{-1})\bigr)=\nu_2\bigl(\ord_p(ab^{-1})\bigr)$, which implies that  $\ord_{p^e}(ab^{-1})$ is even if and only if $\ord_p(ab^{-1})$ is even.

Conversely, assume that $\ord_p(ab^{-1})$ is even and $
T\equiv \frac{\ord_{p^e}(ab^{-1})}{2}
\pmod{\gcd(k,\ord_{p^e}(ab^{-1}))}$.
Then $\ord_{p^e}(ab^{-1})$ is even by Proposition~\ref{prop:liftorder}. Moreover,
\[
\gcd(k,\ord_{p^e}(ab^{-1}))
\mid
\left(\frac{\ord_{p^e}(ab^{-1})}{2}-T\right),
\]
which implies that  the linear congruence
$
ks\equiv \frac{\ord_{p^e}(ab^{-1})}{2}-T
\pmod{\ord_{p^e}(ab^{-1})}$
has an integer solution. Hence, there exists a positive integer $s$ such that
\[
ks+T\equiv \frac{\ord_{p^e}(ab^{-1})}{2}\pmod{\ord_{p^e}(ab^{-1})}.
\]
Since $\ord_{p^e}(ab^{-1})$ is even, 
$
(ab^{-1})^{ks+T}\equiv -1\pmod{p^e}$ by Lemma~\ref{lem:minusone}. Multiplying by $b^{ks+T}$, we obtain $
a^{ks+T}+b^{ks+T}\equiv 0\pmod{p^e}$.
Therefore, $
p^e\in G_{(T,k)}(a,b)$ as desired.
\end{proof}

\begin{corollary}\label{cor:local-2adic}
Let $p$ be an odd prime with $p\nmid ab$, let $e$ be a positive integer. 
Then $
p^e\in G_{(T,k)}(a,b)$ 
if and only if there exists a positive integer $\alpha$ satisfying the  following conditions:
\begin{enumerate}[$(1)$]

\item 
$
\nu_2\bigl(\ord_p(ab^{-1})\bigr)=\alpha$.
\item
$
T\equiv 2^{\alpha-1}\pmod{2^{\min\{\nu_2(k),\alpha\}}},
$
\item $
\gcd\left(\frac{k}{2^{\nu_2(k)}},
\frac{\ord_{p^e}(ab^{-1})}{2^\alpha}\right)\mid T.
$
\end{enumerate}
\end{corollary}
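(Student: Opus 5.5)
The plan is to start from Theorem~\ref{thm:local} and split its congruence into a $2$-primary part and an odd part. Write $o=\ord_{p^e}(ab^{-1})$. By Proposition~\ref{prop:liftorder}, $\nu_2(o)=\nu_2(\ord_p(ab^{-1}))$. So the condition ``$\ord_p(ab^{-1})$ is even'' is the same as asking that $\alpha:=\nu_2(\ord_p(ab^{-1}))$ be a positive integer, which is condition (1). We then write $o=2^\alpha o'$ with $o'$ odd, and $k=2^{\nu_2(k)}k'$ with $k'$ odd. This gives
\[
g:=\gcd(k,o)=2^{\min\{\nu_2(k),\alpha\}}\gcd(k',o'),
\]
and the two factors are coprime. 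By the Chinese Remainder Theorem, the congruence $T\equiv o/2\pmod g$ of Theorem~\ref{thm:local} is equivalent to the pair
\[
T\equiv \tfrac{o}{2}\pmod{2^{\min\{\nu_2(k),\alpha\}}},\qquad T\equiv \tfrac{o}{2}\pmod{\gcd(k',o')}.
\]

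The next step handles each congruence separately.
\begin{itemize}
\item For the $2$-part, we have $o/2=2^{\alpha-1}o'$ with $o'$ odd. Since $\min\{\nu_2(k),\alpha\}\le\alpha$, write $o'=1+2t$. Then $2^{\alpha-1}o'=2^{\alpha-1}+2^{\alpha}t\equiv 2^{\alpha-1}\pmod{2^{\alpha}}$, and therefore also modulo $2^{\min\{\nu_2(k),\alpha\}}$. This yields condition (2).
\item For the odd part, $\gcd(k',o')$ divides $o'$, so it divides $o/2=2^{\alpha-1}o'$ (recall $\alpha\ge1$). Hence $o/2\equiv 0\pmod{\gcd(k',o')}$, and the second congruence becomes $\gcd(k',o')\mid T$. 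Since $k'=k/2^{\nu_2(k)}$ and $o'=o/2^\alpha$, this is exactly condition (3).
\end{itemize}

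Both directions follow, because every step above is an equivalence once $\alpha\ge 1$ is fixed. The existence of $\alpha$ in the statement is forced: $\alpha$ must equal $\nu_2(\ord_p(ab^{-1}))$, so it is uniquely determined.

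The only delicate point is the degenerate case $\min\{\nu_2(k),\alpha\}=0$, that is, $k$ odd. Then condition (2) is a congruence modulo $1$ and holds trivially, and the argument needs no change. One should also check that the coprimality of the two moduli makes the Chinese Remainder splitting valid; it does, since one is a power of $2$ and the other is odd. No genuine obstacle is expected beyond this bookkeeping.
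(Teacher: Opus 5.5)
Your proposal is correct and follows essentially the same route as the paper: start from Theorem~\ref{thm:local}, use Proposition~\ref{prop:liftorder} to get $\nu_2(\ord_{p^e}(ab^{-1}))=\alpha$, and split $\gcd(k,\ord_{p^e}(ab^{-1}))$ into its coprime $2$-part and odd part to obtain (2) and (3). The only difference is cosmetic: you present the argument as a chain of equivalences, whereas the paper writes out the two directions separately.
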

\begin{proof}
Assume  that
$
p^e\in G_{(T,k)}(a,b)$.
By Theorem~\ref{thm:local}, $\ord_p(ab^{-1})$ is even and
\[
T\equiv \frac{\ord_{p^e}(ab^{-1})}{2}
\pmod{\gcd(k,\ord_{p^e}(ab^{-1}))}.
\]
Let $\alpha=\nu_2(\ord_p(ab^{-1}))  $. Then $\alpha$ is positive and satisfies (1). 
 Since
$
\nu_2(\ord_{p^e}(ab^{-1}))=\nu_2(\ord_p(ab^{-1}))=\alpha$, 
we have 
\[
\gcd(k,\ord_{p^e}(ab^{-1}))
=
2^{\min\{\nu_2(k),\alpha\}}
\gcd\left(\frac{k}{2^{\nu_2(k)}},
\frac{\ord_{p^e}(ab^{-1})}{2^\alpha}\right).
\]
This implies
\[
T\equiv \frac{\ord_{p^e}(ab^{-1})}{2}
\equiv 2^{\alpha-1}
\pmod{2^{\min\{\nu_2(k),\alpha\}}},
\]
which is (2), and also
\[
\gcd\left(\frac{k}{2^{\nu_2(k)}},
\frac{\ord_{p^e}(ab^{-1})}{2^\alpha}\right)\mid T,
\]
which is (3).

Conversely, assume that there exists a positive integer $\alpha$ satisfying  (1)--(3). Then $\ord_p(ab^{-1})$ is even, which implies that  $\ord_{p^e}(ab^{-1})$ is also even.  Since
\[
\gcd\left(\frac{k}{2^{\nu_2(k)}},
\frac{\ord_{p^e}(ab^{-1})}{2^\alpha}\right)
\mid
\frac{\ord_{p^e}(ab^{-1})}{2},
\]
condition \textup{(3)} gives
\[
T\equiv \frac{\ord_{p^e}(ab^{-1})}{2}
\pmod{
\gcd\left(\frac{k}{2^{\nu_2(k)}},
\frac{\ord_{p^e}(ab^{-1})}{2^\alpha}\right)}.
\]
Combining this with \textup{(2)} and using the coprimality of the two moduli, we obtain
\[
T\equiv \frac{\ord_{p^e}(ab^{-1})}{2}
\pmod{\gcd(k,\ord_{p^e}(ab^{-1}))}.
\]
Hence, by Theorem~\ref{thm:local}, $p^e\in G_{(T,k)}(a,b)$.
\end{proof}

\subsection{Odd integers:  $2$-adic characterization} \label{subsec:odd}

We now turn to arbitrary odd integers. In this setting, the local conditions at the prime power divisors of $d$ combine into a particularly   global criterion  expressed in terms of the $2$-adic valuations of the orders $\ord_p(ab^{-1})$ for all primes $p\mid d$.

For a positive integer $d$ with distinct prime power factorization
\begin{align}\label{fac-d}
d=\displaystyle\prod_{i=1}^m p_i^{e_i},
\end{align}
we write
\[
\lcm_{p^e\parallel d}\ord_{p^e}(ab^{-1})
=
\lcm\bigl(\ord_{p_1^{e_1}}(ab^{-1}),\dots,\ord_{p_m^{e_m}}(ab^{-1})\bigr).
\] Then we have the following results. 

\begin{lemma}\label{lem:common-alpha}
Let $d$ be an odd integer such that $\gcd(d,ab)=1$ decomposed as in \eqref{fac-d}. If there exists an integer $N$ such that
\[
N\equiv \frac{\ord_{p_j^{e_j}}(ab^{-1})}{2}
\pmod{\ord_{p_j^{e_j}}(ab^{-1})}
\]
for all  $j\in\{1,\dots,m\}$, then
\[
\nu_2\bigl(\ord_{p_1}(ab^{-1})\bigr)
=
\nu_2\bigl(\ord_{p_2}(ab^{-1})\bigr)
=
\cdots
=
\nu_2\bigl(\ord_{p_m}(ab^{-1})\bigr).
\]
\end{lemma}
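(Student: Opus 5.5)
Write $o_j=\ord_{p_j^{e_j}}(ab^{-1})$ for $j\in\{1,\dots,m\}$. The plan is to show first that $\nu_2(N)$ is determined by each $o_j$ separately, namely $\nu_2(N)=\nu_2(o_j)-1$ for every $j$. Proposition~\ref{prop:liftorder} will then pass from the prime power orders $o_j$ to the prime orders $\ord_{p_j}(ab^{-1})$.

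\textbf{Step 1: evenness of each $o_j$.} The hypothesis only makes sense with $o_j/2$ an integer, so I read it as saying that each $o_j$ is even. If one prefers a derivation, combine the congruence with Lemma~\ref{lem:minusone}, applied modulo $p_j^{e_j}\ge 3$. (The case $m=0$, i.e.\ $d=1$, is vacuous.)

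\textbf{Step 2: the $2$-adic valuation of $N$.} For each $j$ write $N=\frac{o_j}{2}+t_jo_j=\frac{o_j}{2}(1+2t_j)$ with $t_j\in\Z$. Since $1+2t_j$ is odd, it contributes nothing to the $2$-adic valuation, so
\[
\nu_2(N)=\nu_2(o_j)-1 .
\]
Here $N\ne 0$, because $N$ is an odd multiple of the nonzero integer $o_j/2$, so $\nu_2(N)$ is well defined. The left-hand side does not depend on $j$, hence $\nu_2(o_1)=\nu_2(o_2)=\cdots=\nu_2(o_m)$.

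\textbf{Step 3: descent to the primes.} Proposition~\ref{prop:liftorder} gives $\nu_2\bigl(\ord_{p_j^{e_j}}(ab^{-1})\bigr)=\nu_2\bigl(\ord_{p_j}(ab^{-1})\bigr)$ for every $j$. It applies because each $p_j$ is odd (as $d$ is odd) and $p_j\nmid ab$ (as $\gcd(d,ab)=1$). Substituting this into Step 2 yields the claimed equality of the valuations $\nu_2\bigl(\ord_{p_j}(ab^{-1})\bigr)$.

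The argument has no real obstacle. The only points needing care are in Step 2: the factor $1+2t_j$ must be odd, which is what pins down $\nu_2(N)$, and $N$ must be nonzero so that $\nu_2(N)$ is defined.
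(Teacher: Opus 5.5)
Your proof is correct and takes essentially the same approach as the paper. The paper writes $\ord_{p_j^{e_j}}(ab^{-1})=2^{\alpha_j}w_j$, uses Proposition~\ref{prop:liftorder} to identify $\alpha_j$ with $\nu_2(\ord_{p_j}(ab^{-1}))$, and rules out $\alpha_i<\alpha_j$ by showing that $N\equiv 2^{\alpha_i-1}$ and $N\equiv 0 \pmod{2^{\alpha_i}}$ would both have to hold; this is the contradiction form of your direct computation $\nu_2(N)=\nu_2(o_j)-1$, which is slightly cleaner because of your observation that $N\neq 0$.
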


\begin{proof}
For each $j\in\{1,\dots,m\}$, write
\[
\ord_{p_j^{e_j}}(ab^{-1})=2^{\alpha_j}w_j,
\]
where $w_j$ is odd. By Proposition~\ref{prop:liftorder},
\[
\alpha_j=\nu_2\bigl(\ord_{p_j}(ab^{-1})\bigr).
\]
Assume that $\alpha_i<\alpha_j$ for some $i\ne j$. Then
\[
N\equiv \frac{\ord_{p_i^{e_i}}(ab^{-1})}{2}
=2^{\alpha_i-1}w_i
\pmod{2^{\alpha_i}}.
\]
Since $w_i$ is odd,
\[
N\equiv 2^{\alpha_i-1}\pmod{2^{\alpha_i}}.
\]
On the other hand,
\[
N\equiv \frac{\ord_{p_j^{e_j}}(ab^{-1})}{2}
=2^{\alpha_j-1}w_j
\pmod{2^{\alpha_j}}.
\]
Since $\alpha_i<\alpha_j$, this implies
\[
N\equiv 0\pmod{2^{\alpha_i}},
\]
a contradiction. Hence, all $\alpha_j$ are equal.
\end{proof}

\begin{lemma}\label{lem:global-compatibility}
Let $d$ be an odd integer such that $\gcd(d,ab)=1$ decomposed as in \eqref{fac-d}. Assume that, for each $j\in\{1,\dots,m\}$,
$
\ord_{p_j^{e_j}}(ab^{-1})=2^\alpha w_j$,
where $\alpha$ is a positive integer and each $w_j$ is odd. If
$
T\equiv 2^{\alpha-1}\pmod{2^{\min\{\alpha,\nu_2(k)\}}}
$
and
$
\gcd\left(\frac{k}{2^{\nu_2(k)}},\lcm(w_1,\dots,w_m)\right)\mid T$,
then the system
\begin{align*}
N&\equiv T\pmod{k},\\
N&\equiv \frac{\ord_{p_j^{e_j}}(ab^{-1})}{2}
\pmod{\ord_{p_j^{e_j}}(ab^{-1})}
\quad (j=1,\dots,m),
\end{align*}
is solvable.
\end{lemma}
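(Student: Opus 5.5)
The plan is to reduce the system to a pair of congruences, one modulo a power of $2$ and one modulo an odd number, and then glue them together with the Chinese Remainder Theorem. Write $L=\lcm(w_1,\dots,w_m)$ (odd), $k=2^{\nu_2(k)}k'$ with $k'$ odd, and $\beta=\min\{\alpha,\nu_2(k)\}$.

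First, the conditions coming from the orders can be merged. Every modulus $\ord_{p_j^{e_j}}(ab^{-1})=2^\alpha w_j$ has the same $2$-part. So the $j$-th congruence is equivalent to the pair
\[
N\equiv 2^{\alpha-1}\pmod{2^\alpha},\qquad N\equiv 0\pmod{w_j}.
\]
This holds because $2^{\alpha-1}w_j\equiv 2^{\alpha-1}\pmod{2^\alpha}$ (as $w_j$ is odd) and $2^{\alpha-1}w_j\equiv 0\pmod{w_j}$. Hence the whole family of order-congruences collapses to
\[
N\equiv 2^{\alpha-1}\pmod{2^\alpha},\qquad N\equiv 0\pmod{L},
\]
which is a single consistent condition modulo $2^\alpha L$.

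Next, we split the congruence $N\equiv T\pmod k$ by CRT into
\[
N\equiv T\pmod{2^{\nu_2(k)}},\qquad N\equiv T\pmod{k'}.
\]
The full system is then equivalent to the $2$-part system
\[
N\equiv 2^{\alpha-1}\pmod{2^\alpha},\qquad N\equiv T\pmod{2^{\nu_2(k)}},
\]
together with the odd-part system
\[
N\equiv 0\pmod L,\qquad N\equiv T\pmod{k'}.
\]
The moduli of the $2$-part are coprime to those of the odd part. So by CRT it suffices to show that each part is solvable on its own.

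For the $2$-part, the generalized CRT says two congruences are solvable if and only if the residues agree modulo the gcd of the moduli, here $2^{\beta}$. The required agreement is $T\equiv 2^{\alpha-1}\pmod{2^\beta}$, which is exactly the first hypothesis. For the odd part, the same criterion requires $T\equiv 0\pmod{\gcd(k',L)}$, which is exactly the second hypothesis $\gcd\bigl(k/2^{\nu_2(k)},L\bigr)\mid T$.

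Combining a solution of the $2$-part with a solution of the odd part by CRT gives an $N$ satisfying the whole system.

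The only delicate step is the first reduction: checking that each congruence modulo $2^\alpha w_j$ really decouples as stated. This relies on all the $\alpha_j$ being equal, which is given by hypothesis. After that, everything is the standard compatibility criterion for pairs of linear congruences.
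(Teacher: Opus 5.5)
Your proof is correct, but it organizes the Chinese Remainder argument differently from the paper.

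\textbf{The paper's route.} The paper keeps the $m+1$ congruences intact and checks that they are pairwise compatible. Any two local congruences agree because $2^\alpha\gcd(w_i,w_j)$ divides $2^{\alpha-1}(w_i-w_j)$. The congruence $N\equiv T\pmod k$ agrees with the $j$th local congruence because $T\equiv \ord_{p_j^{e_j}}(ab^{-1})/2$ modulo $\gcd\bigl(k,\ord_{p_j^{e_j}}(ab^{-1})\bigr)=2^{\beta}\gcd(k',w_j)$, with your $k'=k/2^{\nu_2(k)}$ and $\beta=\min\{\alpha,\nu_2(k)\}$. It then invokes the generalized Chinese Remainder Theorem for arbitrarily many non-coprime moduli.

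\textbf{Your route.} You split into $2$-primary and odd components first. You collapse all local congruences into $N\equiv 2^{\alpha-1}\pmod{2^\alpha}$ and $N\equiv 0\pmod{L}$. This leaves two independent two-congruence problems, glued by the ordinary CRT.

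\textbf{What your route buys.} You need only the two-congruence compatibility criterion. Since every step is an equivalence, you also get that the two hypotheses are necessary for solvability, given the common $\alpha$; the paper re-derives this separately inside the proof of Theorem~\ref{thm:2adic}. Your reduction also explains why $\lcm(w_1,\dots,w_m)$ appears, whereas the paper has to pass from $\gcd(k',L)\mid T$ down to the individual conditions $\gcd(k',w_j)\mid T$.

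\textbf{What the paper's route buys.} It makes visible that compatibility of $N\equiv T\pmod k$ with the $j$th congruence is exactly the local criterion of Theorem~\ref{thm:local} for $p_j^{e_j}$. The lemma then reads as ``local membership plus mutual compatibility of the local congruences.''

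One small precision: splitting a single congruence modulo $2^\alpha w_j$ does not use that the $2$-adic valuations coincide. Only merging the $2$-parts into one congruence modulo $2^\alpha$ does.
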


\begin{proof}
Assume that $
T\equiv 2^{\alpha-1}\pmod{2^{\min\{\alpha,\nu_2(k)\}}}
$
and
$
\gcd\left(\frac{k}{2^{\nu_2(k)}},\lcm(w_1,\dots,w_m)\right)\mid T$. 

First, we  show that the  congruences
\[
N\equiv \frac{\ord_{p_j^{e_j}}(ab^{-1})}{2}
\pmod{\ord_{p_j^{e_j}}(ab^{-1})}
\quad (j=1,\dots,m)
\]
are pairwise compatible. Let $i,j\in\{1,\dots,m\}$. Then
\[
\frac{\ord_{p_i^{e_i}}(ab^{-1})}{2}
-
\frac{\ord_{p_j^{e_j}}(ab^{-1})}{2}
=
2^{\alpha-1}(w_i-w_j).
\]
Since $w_i$ and $w_j$ are odd, the difference $w_i-w_j$ is even  and  $
\gcd(w_i,w_j)\mid (w_i-w_j)$.
Hence, 
\[
2^\alpha\gcd(w_i,w_j)
\mid
\left(
\frac{\ord_{p_i^{e_i}}(ab^{-1})}{2}
-
\frac{\ord_{p_j^{e_j}}(ab^{-1})}{2}
\right).
\]
Since 
$
\gcd\bigl(\ord_{p_i^{e_i}}(ab^{-1}),\ord_{p_j^{e_j}}(ab^{-1})\bigr)
=
2^\alpha\gcd(w_i,w_j)$,
it follows that the congruences are pairwise compatible.

Next, we show that
$
N\equiv T\pmod{k}$ 
is compatible with each local congruence. Let $j$ be fixed in  $\{1,\dots,m\}$. Since
\[
\gcd\left(\frac{k}{2^{\nu_2(k)}},w_j\right)
\mid
\gcd\left(\frac{k}{2^{\nu_2(k)}},\lcm(w_1,\dots,w_m)\right),
\]
the hypothesis implies that
\[
\gcd\left(\frac{k}{2^{\nu_2(k)}},w_j\right)\mid T.
\]
We note that 
$
\gcd\bigl(k,\ord_{p_j^{e_j}}(ab^{-1})\bigr)
=
2^{\min\{\alpha,\nu_2(k)\}}
\gcd\left(\frac{k}{2^{\nu_2(k)}},w_j\right)$ 
and the two factors on the right hand side are coprime. Since
\[
\frac{\ord_{p_j^{e_j}}(ab^{-1})}{2}=2^{\alpha-1}w_j
\]
with $w_j$ odd, the congruence $
T\equiv 2^{\alpha-1}\pmod{2^{\min\{\alpha,\nu_2(k)\}}}$
implies that 
\[
T\equiv \frac{\ord_{p_j^{e_j}}(ab^{-1})}{2}
\pmod{2^{\min\{\alpha,\nu_2(k)\}}}.
\]
Since 
$
\gcd\left(\frac{k}{2^{\nu_2(k)}},w_j\right)
\mid
\frac{\ord_{p_j^{e_j}}(ab^{-1})}{2}$,
the divisibility
\[
\gcd\left(\frac{k}{2^{\nu_2(k)}},w_j\right)\mid T
\]
is equivalent to
\[
T\equiv \frac{\ord_{p_j^{e_j}}(ab^{-1})}{2}
\pmod{\gcd\left(\frac{k}{2^{\nu_2(k)}},w_j\right)}.
\]
Therefore,
\[
T\equiv \frac{\ord_{p_j^{e_j}}(ab^{-1})}{2}
\pmod{\gcd\bigl(k,\ord_{p_j^{e_j}}(ab^{-1})\bigr)}.
\]
Thus, the congruence
$
N\equiv T\pmod{k}$
is compatible with the $j$th local congruence.

Hence,  all congruences in the system are pairwise compatible. By the Chinese Remainder Theorem, the system is solvable.
\end{proof}

\begin{theorem}\label{thm:2adic}
Let $d>1$ be an odd integer such that $\gcd(d,ab)=1$ decomposed as in \eqref{fac-d}. Then
$
d\in G_{(T,k)}(a,b)
$
if and only if there exists a positive integer $\alpha$ satisfying the following conditions:
\begin{enumerate}[$(1)$]
\item
$
\nu_2\bigl(\ord_p(ab^{-1})\bigr)=\alpha$
for every prime $ p\mid d$. 

\item $
T\equiv 2^{\alpha-1}\pmod{2^{\min\{\alpha,\nu_2(k)\}}}$

\item
$
\gcd\left(\frac{k}{2^{\nu_2(k)}},
\frac{\lcm_{p^e\parallel d}\ord_{p^e}(ab^{-1})}{2^\alpha}\right)\mid T$.
\end{enumerate}
\end{theorem}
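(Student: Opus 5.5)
The proof reduces membership of $d$ to the solvability of a system of congruences and then invokes Lemmas~\ref{lem:common-alpha} and~\ref{lem:global-compatibility}. Write $L=\lcm_{p^e\parallel d}\ord_{p^e}(ab^{-1})$. Since $\gcd(d,ab)=1$ and the $p_j^{e_j}$ are pairwise coprime, $d\mid a^N+b^N$ holds if and only if $p_j^{e_j}\mid a^N+b^N$ for every $j$. Each $p_j^{e_j}$ is odd and at least $3$, so Lemma~\ref{lem:minusone} applies to it. Hence $d\in G_{(T,k)}(a,b)$ if and only if some $N=ks+T$ with $s\ge 1$ satisfies, for all $j$, that $\ord_{p_j^{e_j}}(ab^{-1})$ is even and
\[
N\equiv \tfrac{1}{2}\ord_{p_j^{e_j}}(ab^{-1})\pmod{\ord_{p_j^{e_j}}(ab^{-1})}.
\]
This says exactly that the system of Lemma~\ref{lem:global-compatibility} has a solution $N$ with $N\equiv T\pmod k$ and $N>T$. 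The positivity of $s$ is harmless: any solution can be shifted by a multiple of $\lcm(k,L)$.

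For sufficiency, assume (1)--(3). By Proposition~\ref{prop:liftorder}, condition (1) gives $\ord_{p_j^{e_j}}(ab^{-1})=2^\alpha w_j$ with $w_j$ odd and the same $\alpha\ge1$ for all $j$. Then $L=2^\alpha\lcm(w_1,\dots,w_m)$, so condition (3) is precisely the gcd hypothesis of Lemma~\ref{lem:global-compatibility}, and condition (2) is its $2$-adic hypothesis. That lemma yields a solution $N$. After shifting $N$ so that $N>T$, write $N=ks+T$ with $s\ge1$; then $d\mid a^{ks+T}+b^{ks+T}$.

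For necessity, take $s$ with $d\mid a^{ks+T}+b^{ks+T}$ and set $N=ks+T$. By Lemma~\ref{lem:minusone} every local order is even and $N$ satisfies all the local congruences.
\begin{itemize}
\item Lemma~\ref{lem:common-alpha} gives a common value $\alpha=\nu_2(\ord_{p_j}(ab^{-1}))\ge1$, which is (1).
\item For (2) and (3), apply Corollary~\ref{cor:local-2adic} to each $p_j^{e_j}$, since each lies in $G_{(T,k)}(a,b)$. This gives (2) directly, together with $\gcd(k',w_j)\mid T$ for each $j$, where $k'=k/2^{\nu_2(k)}$.
\end{itemize}

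The main obstacle is passing from the local divisibilities $\gcd(k',w_j)\mid T$ to $\gcd(k',\lcm_j w_j)\mid T$, because a gcd with an lcm is not determined by the individual gcds in general. I would avoid this by working directly with $N$. Since $L$ divides $2N-L$ (each local order divides $2N-\ord_{p_j^{e_j}}$ and $L/\ord_{p_j^{e_j}}$ is odd), we get $N\equiv L/2\pmod{L/2^{\alpha}}$ after accounting for the odd part. More simply, $N\equiv 2^{\alpha-1}w_j \pmod{w_j}$, that is, $w_j\mid N$ for each $j$, so $\lcm_j w_j\mid N$. Now $g=\gcd(k',\lcm_j w_j)$ divides both $N$ and $k$. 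Since $N=ks+T$, it follows that $g\mid T$, which is (3).
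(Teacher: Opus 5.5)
Your proof is correct, and its overall structure is the paper's. Sufficiency goes through Lemma~\ref{lem:global-compatibility} followed by shifting the solution to get $s\ge 1$, and necessity goes through Lemma~\ref{lem:minusone} and Lemma~\ref{lem:common-alpha}.

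The only real divergence is condition (3), and there the ``main obstacle'' you name does not exist. For positive integers one always has $\gcd\bigl(k',\lcm(w_1,\dots,w_m)\bigr)=\lcm\bigl(\gcd(k',w_1),\dots,\gcd(k',w_m)\bigr)$, because prime by prime $\min\{x,\max_j y_j\}=\max_j \min\{x,y_j\}$. So the local conditions $\gcd(k',w_j)\mid T$ for all $j$ are equivalent to $\gcd(k',\lcm_j w_j)\mid T$.

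This is exactly the step the paper takes, without spelling out the identity. It derives $T\equiv \tfrac12\ord_{p_j^{e_j}}(ab^{-1})\pmod{\gcd(k,\ord_{p_j^{e_j}}(ab^{-1}))}$ for each $j$. It then splits the modulus as $2^{\min\{\alpha,\nu_2(k)\}}\gcd(k',w_j)$ to obtain (2) and $\gcd(k',w_j)\mid T$, and passes to (3).

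Your replacement argument is valid and a bit more direct. From $N\equiv 2^{\alpha-1}w_j\pmod{2^\alpha w_j}$ you get $w_j\mid N$, hence $\lcm_j w_j\mid N$, hence $\gcd(k',\lcm_j w_j)$ divides $N-ks=T$. It reads the global condition off the exponent $N$ and needs no lattice identity.

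The paper's route, in exchange, makes visible that (2) and (3) are just the conjunction of the local criteria of Corollary~\ref{cor:local-2adic} over all $p^e\parallel d$. Consequently $d\in G_{(T,k)}(a,b)$ exactly when each $p^e\parallel d$ lies in $G_{(T,k)}(a,b)$ and the valuations in (1) coincide.
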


\begin{proof}
Assume first that $
d\in G_{(T,k)}(a,b)$.
Then there exists a positive integer $s$ such that $
d\mid \bigl(a^{ks+T}+b^{ks+T}\bigr)$.
Hence, 
$
p_j^{e_j}\mid \bigl(a^{ks+T}+b^{ks+T}\bigr)$ 
for every $j\in \{1,2,\dots,m\}$.
By Lemma~\ref{lem:minusone}, we have 
\[
ks+T\equiv \frac{\ord_{p_j^{e_j}}(ab^{-1})}{2}
\pmod{\ord_{p_j^{e_j}}(ab^{-1})}
\] for all   $j\in \{1,2,\dots,m\}$.
Hence, by Lemma~\ref{lem:common-alpha}, condition \textup{(1)} holds for some positive integer $\alpha$.

For each  $j\in \{1,2,\dots,m\}$, write
$
\ord_{p_j^{e_j}}(ab^{-1})=2^\alpha w_j$ with odd  $w_j$. Since
\[
ks+T\equiv T\pmod{k}
\quad\text{and}\quad
ks+T\equiv \frac{\ord_{p_j^{e_j}}(ab^{-1})}{2}
\pmod{\ord_{p_j^{e_j}}(ab^{-1})},
\]
we obtain
\[
T\equiv \frac{\ord_{p_j^{e_j}}(ab^{-1})}{2}
\pmod{\gcd\bigl(k,\ord_{p_j^{e_j}}(ab^{-1})\bigr)} 
\]
for every $j\in \{1,2,\dots,m\}$. 
It follows that 
\[
\gcd\bigl(k,\ord_{p_j^{e_j}}(ab^{-1})\bigr)
=
2^{\min\{\alpha,\nu_2(k)\}}
\gcd\left(\frac{k}{2^{\nu_2(k)}},w_j\right),
\]
with coprime factors. Therefore, for every $j\in \{1,2,\dots,m\}$,
\[
T\equiv 2^{\alpha-1}\pmod{2^{\min\{\alpha,\nu_2(k)\}}}
\quad \text{and} \quad
\gcd\left(\frac{k}{2^{\nu_2(k)}},w_j\right)\mid T.
\]
The first is exactly condition \textup{(2)}. Since
\[
\lcm(w_1,\dots,w_m)
=
\frac{\lcm_{p^e\parallel d}\ord_{p^e}(ab^{-1})}{2^\alpha},
\]
the second family of divisibility conditions is equivalent to \textup{(3)}.

Conversely, assume that \textup{(1)}--\textup{(3)} hold. For each $j\in \{1,2,\dots,m\}$, define
$
w_j=\frac{\ord_{p_j^{e_j}}(ab^{-1})}{2^\alpha}$.
Then each $w_j$ is odd  and  the system
\begin{align} \label{eq-solv}
N\equiv T\pmod{k},
\quad
N\equiv \frac{\ord_{p_j^{e_j}}(ab^{-1})}{2}
\pmod{\ord_{p_j^{e_j}}(ab^{-1})}
\quad (j=1,\dots,m)
\end{align}
is solvable by Lemma~\ref{lem:global-compatibility}. Let $M$ be a solution of \eqref{eq-solv}. Since $M\equiv T\pmod{k}$, we may write
$
M=ks+T$ 
for some integer $s$.  Replacing $M$ by a sufficiently large positive integer in the same congruence class modulo the least common multiple of the moduli, we may assume that
$
M=ks+T$
with $s\ge 1$. Then
\[
ks+T\equiv \frac{\ord_{p_j^{e_j}}(ab^{-1})}{2}
\pmod{\ord_{p_j^{e_j}}(ab^{-1})}
\quad\text{for every } j=1,\dots,m.
\]
By Lemma~\ref{lem:minusone}, we have 
\[
p_j^{e_j}\mid \bigl(a^{ks+T}+b^{ks+T}\bigr)
\] for all  $j\in \{1,2,\dots,m\}$. 
Since the prime powers $p_j^{e_j}$ are pairwise coprime, it follows that
\[
d\mid \bigl(a^{ks+T}+b^{ks+T}\bigr).
\]
Hence,  
$
d\in G_{(T,k)}(a,b)$. 
This completes the proof.
\end{proof}

\subsection{The even case}
We now turn to even integers. If exactly one of $a$ and $b$ is even, then
$
G_{(T,k)}(a,b)$
contains no even integers by Lemma \ref{lem:coprime-to-ab}.  Throughout this subsection,  $a$ and $b$ are  assumed  to be odd coprime integers. 

Write $
d=2^\varepsilon d_0$ with $\varepsilon\ge 1$ and  $d_0$ is  odd. Here, the odd part is still controlled by the preceding theorem, while the additional restriction comes from the $2$-adic valuation of $a^{ks+T}+b^{ks+T}$.

 \begin{proposition}\label{prop:two-power}
Let $a$ and $b$ be odd nonzero coprime  integers. Then the following statements hold:
\begin{enumerate}[$(1)$]
    
\item 
$
2\in G_{(T,k)}(a,b)$
for all integers $k\ge 1$ and $0\le T<k$.

\item For every integer $\varepsilon\ge 2$,
$
2^\varepsilon\in G_{(T,k)}(a,b)$
if and only if $
\varepsilon\le \nu_2(a+b)$ and there exists an integer $s\ge 1$ such that
$
ks+T $ is odd.
\end{enumerate}
\end{proposition}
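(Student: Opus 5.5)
The plan is to compute the $2$-adic valuation of $a^N+b^N$ exactly for every positive integer $N$, and then read off both statements with $N=ks+T$. No order arguments, and neither Lemma~\ref{lem:minusone} nor Theorem~\ref{thm:2adic}, are needed here. The modulus is a power of $2$, and the relevant behaviour is governed by parity and congruences modulo $4$.

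\textbf{Statement (1).} Since $a$ and $b$ are odd, $a^N$ and $b^N$ are odd for every $N\ge 1$, so $a^N+b^N$ is even. Taking $N=k\cdot 1+T$, i.e.\ $s=1$, gives $2\mid a^{k+T}+b^{k+T}$. Hence $2\in G_{(T,k)}(a,b)$ for all $k\ge1$ and $0\le T<k$.

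\textbf{Statement (2).} I would split according to the parity of $N$.

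\emph{Case $N$ even.} Write $N=2M$. Then $a^N=(a^M)^2\equiv 1\pmod 4$, because the square of an odd integer is $1$ modulo $8$, and likewise $b^N\equiv1\pmod 4$. So $a^N+b^N\equiv 2\pmod 4$, which gives $\nu_2(a^N+b^N)=1$.

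\emph{Case $N$ odd.} Use the factorization
\[
a^N+b^N=(a+b)\sum_{i=0}^{N-1}(-1)^i a^{N-1-i}b^{i}.
\]
The second factor is a sum of $N$ odd terms, and $N$ is odd, so this factor is odd. Therefore $\nu_2(a^N+b^N)=\nu_2(a+b)$.

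\textbf{Conclusion.} Let $\varepsilon\ge2$ and suppose $2^\varepsilon\mid a^{ks+T}+b^{ks+T}$ for some $s\ge1$. The even case is then impossible, since it gives valuation $1<\varepsilon$. So $ks+T$ is odd, and $\varepsilon\le\nu_2(a^{ks+T}+b^{ks+T})=\nu_2(a+b)$.

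Conversely, suppose $\varepsilon\le\nu_2(a+b)$ and some $s\ge1$ makes $ks+T$ odd. The odd case gives $\nu_2(a^{ks+T}+b^{ks+T})=\nu_2(a+b)\ge\varepsilon$. Hence $2^\varepsilon\mid a^{ks+T}+b^{ks+T}$, and so $2^\varepsilon\in G_{(T,k)}(a,b)$.

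The only step needing any care is checking that the alternating cofactor in the odd case is odd. This is immediate from counting the odd summands, so I expect no real obstacle.
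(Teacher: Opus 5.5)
Your proof is correct and follows the paper's route exactly: the paper also reduces both parts to the formula $\nu_2(a^N+b^N)=\nu_2(a+b)$ for odd $N$ and $\nu_2(a^N+b^N)=1$ for even $N$, applied with $N=ks+T$. The only difference is that the paper states this valuation formula without justification, whereas you prove it, using that odd squares are $1 \bmod 4$ in the even case and that the cofactor in the factorization of $a^N+b^N$ is odd in the odd case.
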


\begin{proof}   Since $a$ and $b$ are odd, it follows that, for every positive integer $N$,
\begin{align} \label{eq:v2}
\nu_2\bigl(a^{N}+b^{N}\bigr)=
\begin{cases}
\nu_2(a+b) & \text{if } N \text{ is odd},\\[1mm]
1 & \text{if } N \text{ is even}.
\end{cases}
\end{align}
Consequently, $a^{ks+T}+b^{ks+T}$ is  even for all positive integers $s$, which implies that 
$
2\mid (a^{ks+T}+b^{ks+T})$.   Therefore, 
$
2\in G_{(T,k)}(a,b)$ which proves (1).

To prove (2), assume that $\varepsilon\ge 2$. By the definition,
$
2^\varepsilon\in G_{(T,k)}(a,b)
$
if and only if there exists an integer $s\ge 1$ such that
$
2^\varepsilon\mid \bigl(a^{ks+T}+b^{ks+T}\bigr)$. Assume that
$
2^\varepsilon\in G_{(T,k)}(a,b)$. Then there exists a positive  integer $s$ such that $
2^\varepsilon\mid \bigl(a^{ks+T}+b^{ks+T}\bigr)$,
and hence,
$
\varepsilon\le \nu_2\bigl(a^{ks+T}+b^{ks+T}\bigr)$.
Since $\varepsilon\ge 2$,   $ks+T$ must be odd by \eqref{eq:v2}. It follows that
\[
\varepsilon\le  \nu_2\bigl(a^{ks+T}+b^{ks+T}\bigr)=\nu_2(a+b).\]

Conversely, assume that  $\varepsilon\le \nu_2(a+b)$ and there exists a positive integer $s$ such that
$
ks+T$  is odd.
Since $ks+T$ is odd,  it follows that 
\[
\nu_2\bigl(a^{ks+T}+b^{ks+T}\bigr)=\nu_2(a+b).
\]
Hence,  we have 
$
\varepsilon\le \nu_2\bigl(a^{ks+T}+b^{ks+T}\bigr)$,
which implies that 
$
2^\varepsilon\mid \bigl(a^{ks+T}+b^{ks+T}\bigr)$.
Therefore, $
2^\varepsilon\in G_{(T,k)}(a,b)$ as desired.
\end{proof}

\begin{proposition}\label{prop:even-case}
Let $a$ and $b$ be odd nonzero integers and let $d_0$ be an odd positive integer. Then the following statements hold.
\begin{enumerate}[$(1)$]
\item
$
2d_0\in G_{(T,k)}(a,b)$ 
if and only if
$
d_0\in G_{(T,k)}(a,b)$.

\item For every integer $\varepsilon\ge 2$,
$
2^\varepsilon d_0\in G_{(T,k)}(a,b)$
if and only if  $\varepsilon\le \nu_2(a+b)$  and there exists a positive  integer $s$ such that
$
d_0\mid \bigl(a^{ks+T}+b^{ks+T}\bigr)$ and $
ks+T$  is odd.
\end{enumerate}
\end{proposition}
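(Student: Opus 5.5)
The plan is to use the fact that $2^\varepsilon$ and $d_0$ are coprime. This gives, for every positive integer $N$,
\[
2^\varepsilon d_0\mid \bigl(a^{N}+b^{N}\bigr)\iff 2^\varepsilon\mid \bigl(a^{N}+b^{N}\bigr)\text{ and } d_0\mid \bigl(a^{N}+b^{N}\bigr).
\]
We then control the $2$-part with the valuation formula \eqref{eq:v2} from the proof of Proposition~\ref{prop:two-power}. Since $a$ and $b$ are odd, that formula shows that $a^N+b^N$ is always even, that its $2$-adic valuation equals $\nu_2(a+b)$ when $N$ is odd, and that it equals $1$ when $N$ is even.

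For statement (1), take $N=ks+T$. If $2d_0\in G_{(T,k)}(a,b)$, then some $s\ge 1$ gives $2d_0\mid (a^{ks+T}+b^{ks+T})$. In particular $d_0$ divides the same term, so $d_0\in G_{(T,k)}(a,b)$. Conversely, suppose $d_0\mid (a^{ks+T}+b^{ks+T})$ for some $s\ge 1$. The number $a^{ks+T}+b^{ks+T}$ is even by \eqref{eq:v2}, and $\gcd(2,d_0)=1$. Hence $2d_0$ divides the same term, with the same $s$. No constraint on the parity of $N$ arises here.

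For statement (2), fix $\varepsilon\ge 2$ and assume $2^\varepsilon d_0\mid (a^{ks+T}+b^{ks+T})$ for some $s\ge 1$. Then $d_0$ divides this term, and $\nu_2(a^{ks+T}+b^{ks+T})\ge \varepsilon\ge 2$. By \eqref{eq:v2}, this forces $ks+T$ to be odd and $\varepsilon\le \nu_2(a+b)$. So this single $s$ witnesses both requirements at once. Conversely, suppose $\varepsilon\le\nu_2(a+b)$ and some $s\ge 1$ has $ks+T$ odd and $d_0\mid (a^{ks+T}+b^{ks+T})$. Then \eqref{eq:v2} gives $2^\varepsilon\mid (a^{ks+T}+b^{ks+T})$, and coprimality gives $2^\varepsilon d_0\mid (a^{ks+T}+b^{ks+T})$.

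The only point needing care is that in (2) the oddness of $ks+T$ and the divisibility by $d_0$ must be witnessed by the \emph{same} $s$. The statement is phrased that way, and the forward direction naturally produces one such $s$, so the argument goes through. Two further details need checking. The formula \eqref{eq:v2} is used only for odd $a,b$, which is assumed. Coprimality of $a$ and $b$ is implicit here, as in Proposition~\ref{prop:two-power}, but the argument does not actually need it beyond \eqref{eq:v2}.
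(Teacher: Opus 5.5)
Your proof is correct and follows essentially the same route as the paper: you split divisibility by $2^\varepsilon d_0$ via coprimality, and you control the $2$-part with the valuation formula \eqref{eq:v2}. You apply \eqref{eq:v2} directly to the specific exponent $ks+T$ instead of citing Proposition~\ref{prop:two-power}, which is phrased as an existence statement, and this is slightly cleaner than the paper's wording.
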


\begin{proof}
 By the  definition,
$
2d_0\in G_{(T,k)}(a,b)
$
if and only if there exists a positive  integer $s$ such that
$
2d_0\mid \bigl(a^{ks+T}+b^{ks+T}\bigr)$.
Since $a$ and $b$ are odd,
$
2\mid \bigl(a^{ks+T}+b^{ks+T}\bigr)$
for every integer $s\ge 1$. Therefore, $
2d_0\mid \bigl(a^{ks+T}+b^{ks+T}\bigr)$
if and only if
$
d_0\mid \bigl(a^{ks+T}+b^{ks+T}\bigr)$. 
This proves (1).

For \textup{(2)}, let $\varepsilon\ge 2$ be an integer. Assume that
$
2^\varepsilon d_0\in G_{(T,k)}(a,b)$.
Then there exists a positive  integer $s$ such that
$
2^\varepsilon d_0\mid \bigl(a^{ks+T}+b^{ks+T}\bigr)$.
Since $
\gcd(2^\varepsilon,d_0)=1$,
it follows that $
2^\varepsilon\mid \bigl(a^{ks+T}+b^{ks+T}\bigr)$ and $ 
d_0\mid \bigl(a^{ks+T}+b^{ks+T}\bigr)$.
By Proposition~\ref{prop:two-power}, the first divisibility implies that $\varepsilon\le \nu_2(a+b)$ and $
ks+T$ is odd.

Conversely, assume that $\varepsilon\le \nu_2(a+b)$  and there exists a positive integer $s$ such that $
d_0\mid \bigl(a^{ks+T}+b^{ks+T}\bigr)$ and $
ks+T$  is odd. 
Since $\varepsilon\ge 2$ and $ks+T$ is odd, we have   $
2^\varepsilon\mid \bigl(a^{ks+T}+b^{ks+T}\bigr)$ by Proposition~\ref{prop:two-power}. 
Together with $
d_0\mid \bigl(a^{ks+T}+b^{ks+T}\bigr)$ 
and $
\gcd(2^\varepsilon,d_0)=1$,
we obtain
$
2^\varepsilon d_0\mid \bigl(a^{ks+T}+b^{ks+T}\bigr)$.
Therefore, 
$
2^\varepsilon d_0\in G_{(T,k)}(a,b)$. 
This proves \textup{(2)}.
\end{proof}

\subsection{Special cases}

In this subsection, several useful refinements of Theorem~\ref{thm:2adic} are presented in the cases where $k$ is odd and where $k$ is a power of $2$. Precisely, the $2$-adic congruence condition imposes no restriction when $k$ is odd, and  the odd-part divisibility condition disappears if $k$ is a power of $2$.  

\subsubsection{The case where $k$ is odd}

When $k$ is odd, the congruence condition in Theorem~\ref{thm:2adic} disappears. Thus, membership in $G_{(T,k)}(a,b)$ for odd integers is  determined  by a common positive $2$-adic valuation of the local orders and an odd part divisibility condition on $T$.

\begin{corollary}\label{thm:k-odd}
Let $k$ be an odd positive integer and let $d>1$ be an odd integer such that  $\gcd(d,ab)=1$. Then $
d\in G_{(T,k)}(a,b)$
if and only if there exists a positive  integer $\alpha\ge 1$ such that
$
\nu_2\bigl(\ord_p(ab^{-1})\bigr)=\alpha$ for all primes $p\mid d$,
and $
\gcd\left(k,\frac{\lcm_{p^e\parallel d}\ord_{p^e}(ab^{-1})}{2^\alpha}\right)\mid T$.
\end{corollary}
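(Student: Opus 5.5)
This follows by specializing Theorem~\ref{thm:2adic} to odd $k$. No new arithmetic is needed; I only need to check that conditions (2) and (3) there collapse to the stated conditions.

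First, since $k$ is odd, $\nu_2(k)=0$. Hence for any positive integer $\alpha$ we have $\min\{\alpha,\nu_2(k)\}=0$. Condition (2) of Theorem~\ref{thm:2adic} then reads $T\equiv 2^{\alpha-1}\pmod{1}$, which holds for every integer $T$. So condition (2) imposes no restriction.

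Second, since $2^{\nu_2(k)}=1$, we have $k/2^{\nu_2(k)}=k$. Condition (3) of Theorem~\ref{thm:2adic} therefore becomes
\[
\gcd\left(k,\frac{\lcm_{p^e\parallel d}\ord_{p^e}(ab^{-1})}{2^\alpha}\right)\mid T,
\]
which is exactly the divisibility stated in the corollary. Condition (1), requiring a common positive value $\alpha$ of $\nu_2(\ord_p(ab^{-1}))$ for all primes $p\mid d$, is unchanged.

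Combining these two observations, the three conditions of Theorem~\ref{thm:2adic} are equivalent to the two conditions of the corollary, for the same $\alpha$. Both directions of the equivalence then follow immediately from Theorem~\ref{thm:2adic}, whose hypotheses ($d>1$ odd, $\gcd(d,ab)=1$) are identical to those here.

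I expect no real obstacle. The only points that need care are reading a congruence modulo $2^0=1$ as vacuous, and noting that the quotient $\lcm_{p^e\parallel d}\ord_{p^e}(ab^{-1})/2^\alpha$ is an odd integer by Proposition~\ref{prop:liftorder}. That oddness is already built into Theorem~\ref{thm:2adic}, so nothing further has to be proved.
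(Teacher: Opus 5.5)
Your proposal is correct and is the paper's own argument: specialize Theorem~\ref{thm:2adic} with $\nu_2(k)=0$. Condition (2) then becomes a congruence modulo $2^0=1$ and is vacuous, and $k/2^{\nu_2(k)}=k$ turns condition (3) into exactly the stated divisibility. You are slightly more explicit than the paper, which leaves the simplification of condition (3) implicit.
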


\begin{proof}
Since $k$ is odd, we have  $\nu_2(k)=0$.    The congruence condition  $T\equiv 2^{\alpha-1}\pmod{2^{\min\{\alpha,\nu_2(k)\}}}$  in Theorem~\ref{thm:2adic} becomes
 $
2^{\min\{\alpha,0\}}=1$ which is always true. The result follows immediately.
\end{proof}

Under the additional assumption $\gcd(T,k)=1$, the divisibility condition in Corollary~\ref{thm:k-odd} is equivalent to
$
\gcd\left(k,\frac{\lcm_{p^e\parallel d}\ord_{p^e}(ab^{-1})}{2^\alpha}\right)=1$.
Thus, the next corollary  follows immediately  from Corollary~\ref{thm:k-odd}.

\begin{corollary}\label{cor:k-odd-coprime}
Let $k$ be an odd positive integer and let $d>1$ be an odd  integer such that  $\gcd(d,ab)=1$. If
$
\gcd(T,k)=1$,
then
$
d\in G_{(T,k)}(a,b)$
if and only if there exists a positive  integer $\alpha\ge 1$ such that
$
\nu_2\bigl(\ord_p(ab^{-1})\bigr)=\alpha$ for all primes $p\mid d$,
and
$
\gcd\left(k,\frac{\lcm_{p^e\parallel d}\ord_{p^e}(ab^{-1})}{2^\alpha}\right)=1
$.
\end{corollary}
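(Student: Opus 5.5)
The plan is to deduce this directly from Corollary~\ref{thm:k-odd}. For odd $k$ and odd $d>1$ with $\gcd(d,ab)=1$, that corollary says $d\in G_{(T,k)}(a,b)$ exactly when two things hold. First, some $\alpha\ge 1$ satisfies $\nu_2(\ord_p(ab^{-1}))=\alpha$ for every prime $p\mid d$. Second, $g\mid T$, where
\[
g=\gcd\left(k,\frac{\lcm_{p^e\parallel d}\ord_{p^e}(ab^{-1})}{2^\alpha}\right).
\]
The first condition is identical in both statements. So it is enough to show that, under $\gcd(T,k)=1$, the conditions $g\mid T$ and $g=1$ are equivalent.

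The direction $g=1\Rightarrow g\mid T$ is trivial. For the converse, note that $g\mid k$ by definition of the gcd. If also $g\mid T$, then $g$ divides $\gcd(T,k)=1$, which forces $g=1$.

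One degenerate case needs a check: $T=0$. Then $\gcd(0,k)=k$, so the hypothesis $\gcd(T,k)=1$ forces $k=1$, and every gcd with $k$ equals $1$. The argument above therefore goes through unchanged in this case.

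I do not expect any step to be an obstacle. The only care needed is to make sure the quantifier over $\alpha$ is handled correctly: the $\alpha$ is the same in both conditions, and the equivalence $g\mid T\iff g=1$ holds for each fixed $\alpha$.
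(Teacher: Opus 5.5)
Your proposal is correct and matches the paper's argument. The paper likewise deduces this directly from Corollary~\ref{thm:k-odd}, observing that when $\gcd(T,k)=1$ the divisibility condition is equivalent to the gcd being $1$: the gcd divides $k$, so if it also divides $T$ it must divide $\gcd(T,k)=1$.
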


\subsubsection{The case where $k$ is a power of $2$}

We now turn to the complementary situation where $
k=2^r$ for some positive integer $r$.
In this case, the odd-part divisibility condition in Theorem~\ref{thm:2adic} disappears completely, and the odd part of membership is  determined by the common $2$-adic valuation of the orders $\ord_p(ab^{-1})$ for primes dividing $d$.

\begin{corollary}\label{thm:k-2power}
Let $r$ be a positive integer and let $d>1$ be an odd integer such that  $\gcd(d,ab)=1$. Then $
d\in G_{(T,2^r)}(a,b)$ 
if and only if there exists a positive integer $\alpha$ such that $
\nu_2\bigl(\ord_p(ab^{-1})\bigr)=\alpha$ 
for all  primes $ p\mid d$,
and
$
T\equiv 2^{\alpha-1}\pmod{2^{\min\{r,\alpha\}}}$.
\end{corollary}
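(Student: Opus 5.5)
This is a direct specialization of Theorem~\ref{thm:2adic} to $k=2^r$, so the plan is to take that theorem as given and check what each of its three conditions becomes.

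Condition (1) of Theorem~\ref{thm:2adic} is unchanged: there is a positive integer $\alpha$ with $\nu_2(\ord_p(ab^{-1}))=\alpha$ for every prime $p\mid d$.

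For condition (2), we use $\nu_2(k)=\nu_2(2^r)=r$. The congruence
\[
T\equiv 2^{\alpha-1}\pmod{2^{\min\{\alpha,\nu_2(k)\}}}
\]
then becomes exactly
\[
T\equiv 2^{\alpha-1}\pmod{2^{\min\{r,\alpha\}}},
\]
which is the congruence in the statement.

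For condition (3), the odd part of $k$ is $k/2^{\nu_2(k)}=1$. Hence
\[
\gcd\left(1,\frac{\lcm_{p^e\parallel d}\ord_{p^e}(ab^{-1})}{2^\alpha}\right)=1,
\]
and $1\mid T$ always holds. So condition (3) imposes no restriction and can be dropped.

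Putting these together, the three conditions of Theorem~\ref{thm:2adic} are equivalent to the two conditions in the statement, which gives the equivalence in both directions.

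There is no real obstacle here. The only points to confirm are that $r\ge 1$ makes $\nu_2(k)=r$, and that $\min\{\alpha,r\}=\min\{r,\alpha\}$, so the modulus matches the one written in the statement.
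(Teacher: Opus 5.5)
Your proposal is correct and follows the paper's own route. The paper also specializes Theorem~\ref{thm:2adic} to $k=2^r$: since $2^r/2^{\nu_2(2^r)}=1$, condition (3) becomes vacuous, and since $\nu_2(2^r)=r$, condition (2) becomes exactly the stated congruence modulo $2^{\min\{r,\alpha\}}$.
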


\begin{proof}
Since $
\frac{2^r}{2^{\nu_2(2^r)}}=1$, the result  follows immediately from Theorem~\ref{thm:2adic}.
\end{proof}

\begin{corollary}\label{cor:k-2power}
Let $r$ be a positive integer and let $d>1$ be an odd integer such that  $\gcd(d,ab)=1$. Then the following statements  hold.
\begin{enumerate}
\item If $T=0$, then
$
d\in G_{(0,2^r)}(a,b)$
if and only if there exists an integer $\alpha\ge r+1$ such that
$
\nu_2\bigl(\ord_p(ab^{-1})\bigr)=\alpha$
for all primes $p\mid d$.

\item If $1\le T<2^r$ and $\nu_2(T)=t$, then
$
d\in G_{(T,2^r)}(a,b)
$
if and only if
$
\nu_2\bigl(\ord_p(ab^{-1})\bigr)=t+1$ 
for all primes $p\mid d$.
\end{enumerate}
\end{corollary}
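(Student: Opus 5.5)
Both parts follow from Corollary~\ref{thm:k-2power}, which says that $d\in G_{(T,2^r)}(a,b)$ if and only if there is a positive integer $\alpha$ with $\nu_2(\ord_p(ab^{-1}))=\alpha$ for all primes $p\mid d$ and
\[
T\equiv 2^{\alpha-1}\pmod{2^{\min\{r,\alpha\}}}.
\]
It therefore suffices to show, for fixed $T$ with $0\le T<2^r$, that this congruence holds for a positive integer $\alpha$ exactly when $\alpha\ge r+1$ (if $T=0$), or exactly when $\alpha=t+1$ (if $T\ge 1$ and $\nu_2(T)=t$). We split according to whether $\alpha\le r$ or $\alpha>r$.

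For part (1), take $T=0$. If $\alpha\le r$, the congruence reads $0\equiv 2^{\alpha-1}\pmod{2^\alpha}$, which is false since $2^{\alpha-1}<2^\alpha$ and $\alpha\ge1$. If $\alpha\ge r+1$, the modulus is $2^r$ and $2^{\alpha-1}$ is divisible by $2^r$, so the congruence holds. Hence the condition is exactly $\alpha\ge r+1$.

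For part (2), let $1\le T<2^r$ and $t=\nu_2(T)$, so $t\le r-1$.
\begin{enumerate}[(i)]
\item If $\alpha\le r$, the condition is $T\equiv 2^{\alpha-1}\pmod{2^\alpha}$. This says precisely that $\nu_2(T)=\alpha-1$, because both numbers are $2^{\alpha-1}$ times an odd integer. So it holds if and only if $\alpha=t+1$, and $t+1\le r$ keeps us in this case.
\item If $\alpha\ge r+1$, the condition becomes $T\equiv 0\pmod{2^r}$, since $2^{\alpha-1}\equiv 0\pmod{2^r}$. This is impossible because $0<T<2^r$.
\end{enumerate}
Thus the admissible $\alpha$ is exactly $t+1$, and Corollary~\ref{thm:k-2power} then gives the stated equivalence.

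No step is genuinely hard. The only point needing care is the equivalence $T\equiv 2^{\alpha-1}\pmod{2^\alpha}\iff \nu_2(T)=\alpha-1$, together with the use of the bound $T<2^r$ to exclude $\alpha>r$ in part (2).
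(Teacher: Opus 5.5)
Your proof is correct and takes essentially the same route as the paper: both reduce to Corollary~\ref{thm:k-2power} and determine exactly which $\alpha$ satisfy the congruence $T\equiv 2^{\alpha-1}\pmod{2^{\min\{r,\alpha\}}}$. Your explicit split into $\alpha\le r$ and $\alpha\ge r+1$ is slightly more careful than the paper, because it shows where $1\le T<2^r$ rules out $\alpha>r$ in part (2), a step the paper leaves implicit when it asserts that the congruence holds if and only if $\alpha-1=t$.
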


\begin{proof}
If $T=0$, then the congruence in Corollary~\ref{thm:k-2power} becomes
$
0\equiv 2^{\alpha-1}\pmod{2^{\min\{r,\alpha\}}}$,
which holds if and only if $\min\{r,\alpha\}\le \alpha-1$.  Equivalently, $\alpha\ge r+1$. This proves \textup{(1)}.

Now assume that $1\le T<2^r$ and write
\[
\nu_2(T)=t,
\]
where 
$
T=2^t u$ 
for some odd integer $u$ and  $0\le t<r$. By Corollary~\ref{thm:k-2power},
$
T\equiv 2^{\alpha-1}\pmod{2^{\min\{r,\alpha\}}}$.
Since $T$ has exact $2$-adic valuation $t$, this congruence holds if and only if
$
\alpha-1=t$. 
Hence, necessarily $
\alpha=t+1$.

Conversely, if $\alpha=t+1$, then
$
2^{\alpha-1}=2^t$. 
Since $T=2^t u$ with $u$ odd, we have
$
T\equiv 2^t \pmod{2^{t+1}}$. Since
$
\min\{r,\alpha\}=t+1$,
the required congruence follows. This proves \textup{(2)}.
\end{proof}

\subsection{Relations among sets of $(T,k)$-good integers}

In this subsection, we study the relations among sets of odd $(T,k)$-good integers as the parameter $T$ varies. For odd integers $d$, Theorem~\ref{thm:local} shows that the admissible values of $T$ are determined by congruence conditions modulo suitable divisors of $k$. This leads naturally to a family of subsets of $\Z_k$ that control how membership in $G_{(T,k)}(a,b)$ behaves  as $T$ varies.

For each integer $T$ with $0\le T<k$, define
\[
C_k(T):=\{(2i+1)T \bmod k : i\ge 0\}\subseteq \Z_k.
\]
The set $C_k(T)$ collects the residue classes modulo $k$ obtained from odd multiples of $T$. It plays a key role in the study of relations among the sets of odd $(T,k)$-good integers as the parameter $T$ varies. The following result describes $C_k(T)$ explicitly.

\begin{proposition}\label{prop:CkT}
Let $k$ and $T$ be   integers  such that $0\le T<k$. Then the following statements hold.
\begin{enumerate}[$(1)$]
\item If
$
\frac{k}{\gcd(T,k)}
$
is odd, then
\[
C_k(T)=\left\{iT \bmod k : 0\le i\le \frac{k}{\gcd(T,k)}-1\right\}
\quad \text{and }
|C_k(T)|=\frac{k}{\gcd(T,k)}.
\]

\item If
$
\frac{k}{\gcd(T,k)}
$
is even, then
\[
C_k(T)=\left\{(2i+1)T \bmod k : 0\le i\le \frac{k}{2\gcd(T,k)}-1\right\}
\quad \text{and }
|C_k(T)|=\frac{k}{2\gcd(T,k)}.
\]
\end{enumerate}
In particular,
$
|C_k(T)|=\frac{k}{\gcd(k,2\gcd(T,k))}$.
\end{proposition}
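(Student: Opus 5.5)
The plan is to view $C_k(T)$ inside the cyclic subgroup $\langle T\rangle$ of the additive group $\Z_k$. Put $g=\gcd(T,k)$ and $m=k/g$. Then $T \bmod k$ generates the subgroup $\langle T\rangle=g\Z_k$, which has order $m$. The multiples $iT \bmod k$ depend only on $i \bmod m$, and they are pairwise distinct for $0\le i\le m-1$. So $C_k(T)$ is exactly the image of the set of odd integers under the surjection $\Z\to\Z_m$, $i\mapsto i \bmod m$, followed by the isomorphism $\Z_m\to\langle T\rangle$, $i\mapsto iT$. Everything therefore reduces to the question: which residues modulo $m$ contain an odd integer? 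The degenerate case $T=0$ gives $g=k$ and $m=1$, with $C_k(0)=\{0\}$, and it fits case (1) directly.

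For case (1), take $m$ odd. Every residue $j \bmod m$ contains an odd integer, namely $j$ or $j+m$, since these have opposite parity. Hence the odd integers cover all of $\Z_m$, and $C_k(T)=\langle T\rangle=\{iT \bmod k: 0\le i\le m-1\}$, which has exactly $m=k/\gcd(T,k)$ elements by the distinctness noted above.

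For case (2), take $m$ even. Parity is then well defined on $\Z_m$, so the odd integers map exactly onto the $m/2$ odd residues $1,3,\dots,m-1$. Equivalently, $(2i+1)T$ for $0\le i\le m/2-1$ runs through the distinct elements $jT$ with $j$ odd and $j<m$. Every other odd multiple reduces to one of these modulo $m$, and therefore modulo $k$ after multiplying by $T$. This gives the stated description and $|C_k(T)|=m/2=k/(2\gcd(T,k))$.

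For the final formula, note that $\gcd(k,2g)$ equals $g$ when $m$ is odd, because then $2\nmid m$ forces $\nu_2(g)=\nu_2(k)$. It equals $2g$ when $m$ is even, because then $2g\mid k$. So $k/\gcd(k,2g)$ reproduces both cases.

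The only step needing care is the distinctness claim: $iT\equiv i'T \pmod k$ if and only if $i\equiv i' \pmod{m}$. This follows by dividing the congruence $(i-i')T\equiv 0 \pmod k$ through by $g$, using $\gcd(T/g,m)=1$. I do not expect a real obstacle here; the argument is routine bookkeeping with cyclic groups.
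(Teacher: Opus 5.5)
Your proposal is correct and follows essentially the same route as the paper: both work with the additive order $m=k/\gcd(T,k)$ of $T$ in $\Z_k$, reduce the question to which residues modulo $m$ contain odd integers (all of them when $m$ is odd, exactly the odd ones when $m$ is even), and get the final formula from $\gcd(k,2\gcd(T,k))=\gcd(T,k)\gcd(m,2)$. Your explicit check that $iT\equiv i'T\pmod k$ if and only if $i\equiv i'\pmod m$, and your choice of $j$ or $j+m$ as a positive odd representative, fill in small details that the paper leaves implicit.
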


\begin{proof}  Let $\ord(T)$ denote the additive order of $T$ in  $\Z_k$. Then  
$
\ord(T)=\frac{k}{\gcd(T,k)}$.

Assume that $\ord(T)$ is odd. Since $\gcd(2,\ord(T))=1$, multiplication by $2$ is a permutation of $\Z_{\ord(T)}$. Hence, for every integer $i$, there exists an integer $j$ such that
$
i\equiv 2j+1 \pmod{\ord(T)}$.
Since $\ord(T)$ is the additive order of $T$ in $\Z_k$, it follows that
$
iT\equiv (2j+1)T \pmod{k}$.
  Therefore,  every multiple $iT$ belongs to $C_k(T)$ which implies that 
\[
C_k(T)=\{iT \bmod k : 0\le i\le \ord(T)-1\}\quad \text{and }
|C_k(T)|=\ord(T).
\]

Assume next that $\ord(T)$ is even. By definition, $
\{(2i+1)T \bmod k : 0\le i\le \tfrac{\ord(T)}{2}-1\}\subseteq C_k(T)$.
This set has exactly $\ord(T)/2$ distinct elements. Conversely, let $\ell\in C_k(T)$. Then
$
\ell\equiv (2j+1)T \pmod k$
for some integer $j\ge 0$. Since $\ord(T)$ is even, there exists an integer $j'$ with
$
0\le j'<\frac{\ord(T)}{2}$
such that
$
2j\equiv 2j' \pmod{\ord(T)}$.
Hence,
$
\ell\equiv (2j'+1)T \pmod k$,
which proves the required description of $C_k(T)$ and its cardinality.

Finally, we have 
\[
|C_k(T)|=\frac{\ord(T)}{\gcd(\ord(T),2)}
=
\frac{k}{\gcd(T,k)\,\gcd\!\left(\frac{k}{\gcd(T,k)},2\right)}
=
\frac{k}{\gcd(k,2\gcd(T,k))}
\] as desired.
\end{proof}

The next lemma gives a basic symmetry of the sets $C_k(T)$.

\begin{lemma}\label{lem:CkT-symmetry}
Let $k$ and $T$ be   integers  such that $0\le T<k$. Then
$
C_k(T)=C_k(k-T)$.
\end{lemma}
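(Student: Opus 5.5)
The idea is that $k-T\equiv -T \pmod k$, so odd multiples of $k-T$ are negatives of odd multiples of $T$ modulo $k$. It therefore suffices to show that $C_k(T)$ is closed under negation in $\Z_k$. If $T=0$, then $k-T=k\equiv 0$ and both sets equal $\{0\}$; in this case $C_k(k)$ is read modulo $k$, and $k\equiv 0$. So assume $1\le T<k$.

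\emph{Step 1: $C_k(T)=-C_k(T)$.} Let $\ell=(2i+1)T \bmod k\in C_k(T)$ and write $o=\ord(T)=k/\gcd(T,k)$ for the additive order of $T$ in $\Z_k$, as in the proof of Proposition~\ref{prop:CkT}. We need a nonnegative integer $j$ with
\[
(2j+1)T\equiv -(2i+1)T \pmod k,
\qquad\text{equivalently}\qquad
2j+1\equiv -(2i+1)\pmod{o}.
\]
If $o$ is even, $-(2i+1)$ is odd, so we take any odd representative $2j+1\ge 1$ of $-(2i+1)+o\cdot t$ for $t$ large; adding multiples of the even number $o$ preserves parity. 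If $o$ is odd, Proposition~\ref{prop:CkT}(1) shows that $C_k(T)$ contains every multiple of $T$, so $-\ell=(o-(2i+1) \bmod o)\,T$ already lies in $C_k(T)$. Either way, $-\ell\in C_k(T)$.

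\emph{Step 2: conclude.} For any $i\ge 0$ we have $(2i+1)(k-T)\equiv -(2i+1)T\pmod k$, hence $C_k(k-T)=-C_k(T)$. By Step~1 this equals $C_k(T)$.

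The only delicate point is ensuring the new index $j$ is nonnegative, that is, the representative stays in the form $2j+1$ with $j\ge 0$. This is handled by adding a large even multiple of $o$: use $o$ itself when $o$ is even, and $2o$ when $o$ is odd, so parity is preserved.
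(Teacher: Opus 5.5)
Your proof is correct and takes essentially the paper's approach: the paper also writes $C_k(k-T)=\{-(2i+1)T \bmod k : i\ge 0\}$ and observes that, modulo $k$, the odd multiples of $-T$ coincide with the odd multiples of $T$. Your Step~1 supplies the justification the paper leaves implicit, namely shifting the odd multiplier by an even multiple of the additive order; shifting by a large multiple of $2k$ works in every case, so your parity case split on $o$ is not needed.
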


\begin{proof}
Since $k-T\equiv -T\pmod k$, we have
\[
C_k(k-T)=\{(2i+1)(k-T)\bmod k : i\ge 0\}
       =\{-(2i+1)T\bmod k : i\ge 0\}.
\]
As $i$ runs over the nonnegative integers, the odd multiples of $-T$ modulo $k$ coincide with the odd multiples of $T$ modulo $k$. Therefore
$
C_k(k-T)=C_k(T)$.
\end{proof}

The next result gives the corresponding arithmetic propagation for the sets $G_{(T,k)}(a,b)$.

\begin{proposition}\label{prop:CkT-good}
Let $k$ and $T$ be   integers  such that $0\le T<k$ and let $d>1$ be an odd integer such that $\gcd(d,ab)=1$. Then
$
d\in G_{(T,k)}(a,b)
$
if and only if
$
d\in G_{(i,k)}(a,b)
 $ for every $ i\in C_k(T)$.
\end{proposition}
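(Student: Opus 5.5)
The plan is to argue directly from the definition, using the elementary divisibility $x+y\mid x^m+y^m$ for odd $m$, rather than going through the $2$-adic criterion of Theorem~\ref{thm:2adic}. The reverse implication is immediate. Taking $i=0$ in the definition of $C_k(T)$ shows that $T \bmod k = T\in C_k(T)$, since $0\le T<k$. So if $d\in G_{(i,k)}(a,b)$ for every $i\in C_k(T)$, then in particular $d\in G_{(T,k)}(a,b)$.

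For the forward implication, I will assume $d\mid \bigl(a^{ks+T}+b^{ks+T}\bigr)$ for some $s\ge 1$ and put $N=ks+T$. Fix $i\ge 0$ and let $m=2i+1$. Since $m$ is odd, the factorization
\[
x^m+y^m=(x+y)\sum_{j=0}^{m-1}(-1)^j x^{m-1-j}y^{j}
\]
applied with $x=a^N$, $y=b^N$ gives $a^N+b^N\mid a^{mN}+b^{mN}$. Hence $d\mid \bigl(a^{mN}+b^{mN}\bigr)$.

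It then remains to write the exponent $mN$ in the form $ks'+r$ with $r=(2i+1)T \bmod k$ and $s'\ge 1$. Write $(2i+1)T=qk+r$ with $q\ge 0$ and $0\le r<k$. Then
\[
mN=(2i+1)ks+(2i+1)T=k\bigl((2i+1)s+q\bigr)+r,
\]
and $s'=(2i+1)s+q\ge s\ge 1$. Therefore $d\mid \bigl(a^{ks'+r}+b^{ks'+r}\bigr)$, so $d\in G_{(r,k)}(a,b)$. Since every element of $C_k(T)$ arises as such an $r$ for some $i\ge 0$, this gives the forward direction.

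There is no serious obstacle. The only points needing care are that the new exponent parameter $s'$ stays positive, which holds because $q\ge 0$, and that the residue $r$ lies in the admissible range $0\le r<k$. Oddness of $d$ and the hypothesis $\gcd(d,ab)=1$ are not actually needed here. As a cross-check one could instead derive the result from Theorem~\ref{thm:local} prime by prime, since $T\equiv \tfrac{o}{2}$ modulo $\gcd(k,o)$ implies $(2i+1)T\equiv \tfrac{o}{2}$ modulo the same quantity, $o$ being even. However, that route would still require recombining the primes via Lemma~\ref{lem:global-compatibility}, so the direct argument is preferable.
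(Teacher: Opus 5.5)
Your proof is correct, and it takes a genuinely different route from the paper's. The paper argues prime by prime. For each $p^e\parallel d$, Theorem~\ref{thm:local} gives $T\equiv o/2 \pmod{\gcd(k,o)}$ with $o=\ord_{p^e}(ab^{-1})$ even. Since $(2j+1)\frac{o}{2}=\frac{o}{2}+jo$, every $i\equiv(2j+1)T \pmod k$ satisfies the same congruence, so $p^e\in G_{(i,k)}(a,b)$. The paper then concludes $d\in G_{(i,k)}(a,b)$ directly.

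That last step is exactly the recombination you anticipated. Membership of each $p^e$ separately does not by itself yield a common exponent for $d$. To complete it one needs two things: the common value $\alpha$ of $\nu_2(\ord_p(ab^{-1}))$, which Lemma~\ref{lem:common-alpha} supplies because $d\in G_{(T,k)}(a,b)$, and Theorem~\ref{thm:2adic}, whose conditions (2) and (3) do carry over from $T$ to $i$. The paper leaves this implicit.

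Your identity $a^N+b^N\mid a^{(2i+1)N}+b^{(2i+1)N}$ produces one exponent $ks'+r$ valid for all of $d$ at once, so no local-to-global step arises. It also uses neither the oddness of $d$ nor $\gcd(d,ab)=1$. Consequently the proposition holds for the full sets $G_{(T,k)}(a,b)$, not just for $G^O_{(T,k)}(a,b)$. The same goes for Corollaries~\ref{cor:type-k-minus-T-good} and~\ref{cor:k-odd-type0-good} and Theorem~\ref{thm:prime-k-relations}, so the caution in the remark following Theorem~\ref{thm:prime-k-relations} is unnecessary for these statements.

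What the paper's route offers instead is a structural explanation tied to the order criteria of Section~\ref{sec2}. The condition $T\equiv o/2 \pmod{\gcd(k,o)}$ is visibly stable under multiplication by odd integers, which is why odd multiples of $T$ are the natural objects. Your argument reaches the conclusion more cheaply and in greater generality.
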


\begin{proof}
Assume that
$
d\in G_{(T,k)}(a,b)$.
Let $p^e\parallel d$. By Theorem~\ref{thm:local},
$
\ord_{p^e}(ab^{-1})$  is even
and
\[
T\equiv \frac{\ord_{p^e}(ab^{-1})}{2}
\pmod{\gcd\bigl(k,\ord_{p^e}(ab^{-1})\bigr)}.
\]
Let $i\in C_k(T)$. Then $
i\equiv (2r+1)T \pmod k
$
for some integer $r\ge 0$. Reducing modulo $\gcd\bigl(k,\ord_{p^e}(ab^{-1})\bigr)$, we obtain
\[
i\equiv (2r+1)\frac{\ord_{p^e}(ab^{-1})}{2}
= \frac{\ord_{p^e}(ab^{-1})}{2}+r\,\ord_{p^e}(ab^{-1})
\equiv \frac{\ord_{p^e}(ab^{-1})}{2}
\pmod{\gcd\bigl(k,\ord_{p^e}(ab^{-1})\bigr)}.
\]
Therefore, by Theorem~\ref{thm:local},
$
p^e\in G_{(i,k)}(a,b)$
for every $ p^e\parallel d$.
Hence,
$
d\in G_{(i,k)}(a,b)$ for every $ i\in C_k(T)$.

Conversely,  the reverse implication is immediate since $T\in C_k(T)$.
\end{proof}

For convenience, let
\[
G^O_{(T,k)}(a,b)=\{d\in G_{(T,k)}(a,b): d \text{ is odd}\}
\]
denote the set of odd integers in $G_{(T,k)}(a,b)$. 
The following results describe how these sets are related as $T$ varies.  

\begin{corollary}\label{cor:type-k-minus-T-good} Let $k$ and $T$ be   integers  such that $0\le T<k$. Then
$
G^O_{(T,k)}(a,b)=G^O_{(k-T,k)}(a,b)$.
\end{corollary}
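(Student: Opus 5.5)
The plan is to deduce the equality from Proposition~\ref{prop:CkT-good} together with the symmetry in Lemma~\ref{lem:CkT-symmetry}. First I dispose of the trivial element and the coprimality issue. The integer $d=1$ lies in every $G_{(T,k)}(a,b)$, so it belongs to both sides. By Lemma~\ref{lem:coprime-to-ab}, every odd $d>1$ in either set satisfies $\gcd(d,ab)=1$. Hence it suffices to show that, for every odd $d>1$ with $\gcd(d,ab)=1$, the membership $d\in G_{(T,k)}(a,b)$ holds if and only if $d\in G_{(k-T,k)}(a,b)$.

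Main case $1\le T<k$, so that $1\le k-T<k$ and both parameters lie in the admissible range. By definition $k-T=(2\cdot 0+1)(k-T)\bmod k\in C_k(k-T)$, and Lemma~\ref{lem:CkT-symmetry} gives $C_k(k-T)=C_k(T)$. Thus $k-T\in C_k(T)$. If $d\in G_{(T,k)}(a,b)$, then Proposition~\ref{prop:CkT-good} gives $d\in G_{(i,k)}(a,b)$ for every $i\in C_k(T)$, and in particular $d\in G_{(k-T,k)}(a,b)$. The reverse inclusion follows by the same argument with $T$ replaced by $k-T$, since $k-(k-T)=T$ and $T\in C_k(k-T)$.

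Edge case $T=0$, which I expect to be the only delicate point since $k-T=k$ falls outside the range $0\le T<k$. I would read $G_{(k,k)}(a,b)$ literally from the defining condition, namely $d\mid a^{k(s+1)}+b^{k(s+1)}$ for some $s\ge 1$. This set is contained in $G_{(0,k)}(a,b)$ by taking $s'=s+1$. For the converse, suppose $d\mid a^{ks}+b^{ks}$ with $s\ge1$. By Lemma~\ref{lem:minusone} applied to each prime-power divisor of $d$ (note $d\ge 3$ here), the divisibility depends only on $ks$ modulo $L=\lcm_{p^e\parallel d}\ord_{p^e}(ab^{-1})$. Replacing $s$ by $s+L$ gives $k(s+L)\equiv ks\pmod L$, so $d\mid a^{k(s+L)}+b^{k(s+L)}$ with $s+L-1\ge1$. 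Therefore $d\in G_{(k,k)}(a,b)$, and the two sets coincide in this case as well.
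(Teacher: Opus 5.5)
Your proof is correct and follows the paper's argument: Lemma~\ref{lem:CkT-symmetry} gives $k-T\in C_k(T)$, Proposition~\ref{prop:CkT-good} then yields the inclusion, and swapping $T$ and $k-T$ gives the reverse inclusion. Your separate handling of $d=1$ (which Proposition~\ref{prop:CkT-good} excludes) and of $T=0$ fills in details the paper's proof passes over; there the paper tacitly reads $k-T$ modulo $k$, and your periodicity argument also justifies the literal reading $G_{(k,k)}(a,b)$.
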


\begin{proof} 
By Proposition~\ref{prop:CkT-good}, if
$
d\in G_{(T,k)}(a,b)$,
then
$
d\in G_{(i,k)}(a,b)$
for every $ i\in C_k(T)$.
By Lemma~\ref{lem:CkT-symmetry},  we have  $k-T\in C_k(T)$ which  implies  that $
d\in G_{(k-T,k)}(a,b)$.
The converse is proved in the same way.
Therefore, $
G^O_{(T,k)}(a,b)=G^O_{(k-T,k)}(a,b)$.
\end{proof}

\begin{corollary}\label{cor:k-odd-type0-good} Let $k$ and $T$ be   integers  such that $0\le T<k$.
If  $k$ is odd, then
$
G^O_{(T,k)}(a,b)\subseteq G^O_{(0,k)}(a,b)$.
\end{corollary}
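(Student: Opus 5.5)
The plan is to reduce the statement to Proposition~\ref{prop:CkT-good} by showing that $0\in C_k(T)$ whenever $k$ is odd. Let $d\in G^O_{(T,k)}(a,b)$. If $d=1$ there is nothing to prove, since $1$ divides every term and hence $1\in G^O_{(0,k)}(a,b)$. So assume $d>1$. By Lemma~\ref{lem:coprime-to-ab}, $d$ is coprime to $ab$, and $d$ is odd. Thus the hypotheses of Proposition~\ref{prop:CkT-good} are satisfied, and $d\in G_{(i,k)}(a,b)$ for every $i\in C_k(T)$.

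It remains to check that $0\in C_k(T)$. By Proposition~\ref{prop:CkT}, the additive order $k/\gcd(T,k)$ of $T$ in $\Z_k$ divides $k$. Since $k$ is odd, this order is odd, so case (1) of Proposition~\ref{prop:CkT} applies. That case gives
\[
C_k(T)=\left\{iT \bmod k : 0\le i\le \tfrac{k}{\gcd(T,k)}-1\right\},
\]
which contains $0$ (take $i=0$).

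One can also see this directly: the odd multiple $kT=(2i+1)T$ with $i=(k-1)/2\ge 0$ is congruent to $0$ modulo $k$. Hence $d\in G_{(0,k)}(a,b)$, and since $d$ is odd, $d\in G^O_{(0,k)}(a,b)$.

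There is no real obstacle here. The only points needing care are the trivial case $d=1$, which lies outside the range $d>1$ of Proposition~\ref{prop:CkT-good}, and confirming the coprimality hypothesis through Lemma~\ref{lem:coprime-to-ab}. The case $T=0$ is also immediate, since then the inclusion is an equality.
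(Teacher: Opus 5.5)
Your proof is correct and follows the same route as the paper: since $k/\gcd(T,k)$ divides the odd integer $k$, Proposition~\ref{prop:CkT} gives $0\in C_k(T)$, and Proposition~\ref{prop:CkT-good} then yields the inclusion. Your separate treatment of $d=1$, and your check of the hypothesis $\gcd(d,ab)=1$ via Lemma~\ref{lem:coprime-to-ab}, supply details that the paper's one-line proof leaves implicit.
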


\begin{proof}
Since $\ord(T)$ divides $k$, it follows that $\ord(T)$ is odd. Hence, by Proposition~\ref{prop:CkT}, we have  $
0\in C_k(T)$.
The result now follows from Proposition~\ref{prop:CkT-good}.
\end{proof}

\begin{theorem}\label{thm:prime-k-relations}
Let $\ell$ be an odd prime. Then
\[
G^O_{(1,\ell)}(a,b)=G^O_{(2,\ell)}(a,b)=\cdots=G^O_{(\ell-1,\ell)}(a,b)\subseteq G^O_{(0,\ell)}(a,b).
\]
\end{theorem}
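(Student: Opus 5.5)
The plan is to derive everything from Proposition~\ref{prop:CkT-good} together with the explicit description of $C_k(T)$ in Proposition~\ref{prop:CkT}, specialized to $k=\ell$ an odd prime.

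First, fix $T$ with $1\le T\le \ell-1$. Since $\ell$ is prime, $\gcd(T,\ell)=1$, so $\ell/\gcd(T,\ell)=\ell$ is odd. Proposition~\ref{prop:CkT}(1) then gives
\[
C_\ell(T)=\{iT \bmod \ell : 0\le i\le \ell-1\}=\Z_\ell ,
\]
because $T$ is a unit modulo $\ell$ and multiplication by $T$ permutes $\Z_\ell$. Thus $C_\ell(T)$ contains every residue $0,1,\dots,\ell-1$.

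Next, let $d\in G^O_{(T,\ell)}(a,b)$. If $d=1$, then $d$ lies in every $G_{(i,\ell)}(a,b)$ trivially, since $1$ divides everything. If $d>1$, then $\gcd(d,ab)=1$ by Lemma~\ref{lem:coprime-to-ab}, so Proposition~\ref{prop:CkT-good} applies and yields $d\in G_{(i,\ell)}(a,b)$ for every $i\in C_\ell(T)=\Z_\ell$. In particular $d\in G^O_{(T',\ell)}(a,b)$ for every $1\le T'\le \ell-1$, and also $d\in G^O_{(0,\ell)}(a,b)$.

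Applying this with $T$ and $T'$ interchanged gives $G^O_{(T,\ell)}(a,b)=G^O_{(T',\ell)}(a,b)$ for all $1\le T,T'\le\ell-1$. The inclusion into $G^O_{(0,\ell)}(a,b)$ follows from the same step; alternatively it is Corollary~\ref{cor:k-odd-type0-good} with $k=\ell$ odd.

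There is no real obstacle here. The only point needing care is that Proposition~\ref{prop:CkT-good} is stated for $d>1$ with $\gcd(d,ab)=1$, so the case $d=1$ and the coprimality hypothesis must be handled separately, which Lemma~\ref{lem:coprime-to-ab} does. The inclusion is generally not reversible, since $T=0$ gives $C_\ell(0)=\{0\}$ and so nothing propagates back from type $0$.
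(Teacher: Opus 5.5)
Your proposal is correct and follows the paper's proof essentially verbatim: for $1\le T\le \ell-1$, both use Proposition~\ref{prop:CkT} to get $C_\ell(T)=\Z_\ell$ and then propagate membership through Proposition~\ref{prop:CkT-good}. Your explicit treatment of $d=1$ and of the coprimality hypothesis via Lemma~\ref{lem:coprime-to-ab} is a small piece of care that the paper's proof leaves implicit.
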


\begin{proof}
Let $1\le T\le \ell-1$. Since $\ell$ is prime, the additive order of $T$ in $\Z_\ell$ is equal to $\ell$ which is odd. By Proposition~\ref{prop:CkT},   we have  $
C_\ell(T)=\Z_\ell$.
By Proposition~\ref{prop:CkT-good}, for every odd integer $d$ with $\gcd(d,ab)=1$,
$
d\in G_{(T,\ell)}(a,b)
$ if and only if $
d\in G_{(i,\ell)}(a,b)$  for all $i\in\{0,1,\dots,\ell-1\}$.
In particular, the membership of $d$ in $G_{(T,\ell)}(a,b)$ is independent of the choice of
$
T\in\{1,\dots,\ell-1\}$,
which implies that
$
d\in G_{(0,\ell)}(a,b)$.
Therefore, 
\[
G^O_{(1,\ell)}(a,b)=G^O_{(2,\ell)}(a,b)=\cdots=G^O_{(\ell-1,\ell)}(a,b)\subseteq G^O_{(0,\ell)}(a,b).
\]
\end{proof}

\begin{remark}
The preceding relations are formulated only for the odd parts
$
G^O_{(T,k)}(a,b),
$
and they do not extend automatically to the full sets $G_{(T,k)}(a,b)$. Indeed, for even integers, membership depends not only on the odd prime-power conditions but also on the $2$-adic divisibility of
$
a^{ks+T}+b^{ks+T}$,
which is determined by the parity of the exponent $ks+T$.
\end{remark}

\section{An Explicit Algorithm and Examples} \label{sec3}
Combining the local characterization at odd prime powers, the global $2$-adic criterion for odd integers, and the analysis of the even case from the previous section, we present  a practical procedure for deciding whether a given integer belongs to $G_{(T,k)}(a,b)$ and, when it does, for constructing a corresponding exponent. The procedure is presented in Algorithm~\ref{alg:good}.

\begin{algorithm}[!hbt]  
\caption{Determine whether $d\in G_{(T,k)}(a,b)$ and compute an exponent}
\label{alg:good}

 \small
\begin{algorithmic}[1]
\Require Coprime nonzero integers $a,b$, integers $k\ge 1$, $0\le T<k$, and $d\ge 1$.
\Ensure Decide whether $d\in G_{(T,k)}(a,b)$; if yes, output one integer $s\ge 1$ such that
\[
d\mid \bigl(a^{ks+T}+b^{ks+T}\bigr).\]

\If{$\gcd(d,ab)\ne 1$}
  \State \Return ``No''.
\EndIf

\State Write
$
d=2^\varepsilon d_0$
with $\varepsilon\ge 0$ and $d_0$ odd.

\If{$\varepsilon\ge 1$ and exactly one of $a,b$ is even}
  \State \Return ``No''.
\EndIf

\If{$d_0>1$}
  \State Factor
  $
  d_0=\prod_{i=1}^m p_i^{e_i}$.
  \For{$i=1$ to $m$}
    \State Compute
    $
    t_i=\ord_{p_i^{e_i}}(ab^{-1}),
    \quad
    g_i=\gcd(k,t_i),
    \quad
    L_i=\frac{t_i}{g_i},
    \quad
    c_i=\frac{t_i}{2}-T$,
    where $b^{-1}$ denotes the inverse of $b$ modulo $p_i^{e_i}$.
    \If{$t_i$ is odd or $g_i\nmid c_i$}
      \State \Return ``No''.
    \EndIf
    \State Solve
    $
    ks\equiv c_i \pmod{t_i}$,
    obtaining one congruence class
    $
    s\equiv s_i \pmod{L_i}$.
  \EndFor

  \State Solve the simultaneous congruences
  $
  s\equiv s_i \pmod{L_i},
  \quad 1\le i\le m$.
  \If{the system is inconsistent}
    \State \Return ``No''.
  \EndIf

  \State Set
  $
  L=\lcm(L_1,\dots,L_m)
  =
  \lcm_{1\le i\le m}\left(\frac{t_i}{g_i}\right)$.
  
  \State Let
  $
  s\equiv s_0 \pmod L$
  be the resulting solution class.

\Else
  \State Set $
  s_0=1 $ and $
  L=1$.
\EndIf

\If{$\varepsilon=0$}
  \State Output any positive integer $s\equiv s_0\pmod L$ and \Return ``Yes''.
\EndIf

\If{$a$ and $b$ are odd}
  \If{$\varepsilon=1$}
    \State Output any positive integer $s\equiv s_0\pmod L$ and \Return ``Yes''.
  \EndIf

  \If{$\varepsilon>\nu_2(a+b)$}
    \State \Return ``No''.
  \EndIf

  \If{there exists a positive integer $s\equiv s_0\pmod L$ such that $ks+T$ is odd}
    \State Output such an $s$ and \Return ``Yes''.
  \Else
    \State \Return ``No''.
  \EndIf
\EndIf
\end{algorithmic}
\end{algorithm}

The following example illustrates how  Algorithm  \ref{alg:good} works in practice. The first part shows the odd case, where the membership test is determined entirely by the odd-part congruence condition. The second one shows how the even part is handled by combining the odd-part condition with the additional $2$-adic restriction.

\begin{example}
Let $a=3$, $b=1$, and $k=6$. We illustrate Algorithm~\ref{alg:good} by determining some memberships in $G_{(1,6)}(3,1)$ and $G_{(3,6)}(3,1)$.

\begin{enumerate}[$(1)$]
\item Let $d=7$.  First, consider   $T=1$. Since
$
\ord_7(3)=6$,
we have
\[
t=6,\quad g=\gcd(6,6)=6, \quad \text{and } c=\frac{6}{2}-1=2.
\]
Thus,  we would need to solve
$
6s\equiv 2 \pmod 6$,
which is impossible. Therefore, 
$
7\notin G_{(1,6)}(3,1)$.

Next, consider   $T=3$. We again have
\[
t=6,\quad g=\gcd(6,6)=6,\quad  \text{and } c=\frac{6}{2}-3=0.
\]
Hence, the congruence
$
6s\equiv 0 \pmod 6$ 
is solvable,  which implies that $
7\in G_{(3,6)}(3,1)$.
For example, taking $s=1$, we obtain
$
3^{6\cdot 1+3}+1=3^9+1=19684=7\cdot 2812$,
which is divisible by $7$.

\item Let $d=28=2^2\cdot 7$.
Write $
d=2^\varepsilon d_0$
with $
\varepsilon=2$ and $d_0=7$.
Since $a=3$ and $b=1$ are odd,we have $
\nu_2(a+b)=\nu_2(4)=2$.

For $T=1$, the odd part already fails since
$
7\notin G_{(1,6)}(3,1)$.
Hence, $
28\notin G_{(1,6)}(3,1)$.

For $T=3$, we already know from \textup{(1)} that $
7\in G_{(3,6)}(3,1)$.
Taking $s=1$, we have
$
6s+3=9$,
which is odd. Moreover,
$
\nu_2(3^9+1)=\nu_2(3+1)=2,
$
which implies that 
$
4\mid (3^9+1)$.
Since 
$
7\mid (3^9+1)$,
it follows that
$
28\mid (3^9+1)$. 
Therefore, $
28\in G_{(3,6)}(3,1)$.
\end{enumerate}
Consequently,
$
7,28\in G_{(3,6)}(3,1),
\quad\text{but}\quad
7,28\notin G_{(1,6)}(3,1)$.
\end{example}

The following example, computed using Algorithm~\ref{alg:good}, presents the sets of $(T,6)$-good integers with respect to $a=3$ and $b=1$ in the range $1\le d\le 100$, for $T=0,1,2,3,4,5$.

\begin{example}
Let $a=3$, $b=1$, and $k=6$. For each $T\in\{0,1,2,3,4,5\}$, the set of $(T,6)$-good integers  in the range $1\le d\le 100$ is given in Table~\ref{tab:36-good}.

\begin{table}[!hbt]
\centering
\caption{The sets $G_{(T,6)}(3,1)\cap \{1,2,\dots,100\}$}
\label{tab:36-good}
\begin{tabular}{cl}
\toprule
$T$ & $G_{(T,6)}(3,1)\cap \{1,2,\dots,100\}$ \\
\midrule
$0$ &
$\{1,2,5,10,17,25,29,34,41,50,53,58,73,82,89,97\}$ \\[1mm]

$1$ &
$\{1,2,4,61,67\}$ \\[1mm]

$2$ &
$\{1,2,5,10,17,25,29,34,41,50,53,58,82,89\}$ \\[1mm]

$3$ &
$\{1,2,4,7,14,19,28,31,37,38,43,49,61,62,67,74,76,79,86,98\}$ \\[1mm]

$4$ &
$\{1,2,5,10,17,25,29,34,41,50,53,58,82,89\}$ \\[1mm]

$5$ &
$\{1,2,4,61,67\}$ \\
\bottomrule
\end{tabular}
\end{table}

\end{example}

\section{$q^k$-Cyclotomic Classes on Finite Abelian Groups}\label{sec4}

 In this section, the condition $
d\in G_{(T,k)}(q,1)$ is applied   
to determine when a $q^k$-cyclotomic class  of  finite cyclic and abelian groups is stable under the action $
g\mapsto -q^T g
$. The map
$
g\mapsto -q^T g
$
provides a common setting for the classical actions arising in coding theory. Indeed, when $k=1$ and $T=0$, it reduces to
$
g\mapsto -g$,
which corresponds to the Euclidean case over $\F_q$. When $k=2$ and $T=1$, it becomes
$
g\mapsto -qg$,
which corresponds to the Hermitian case over $\F_{q^2}$. Hence, the notion of type $T$ naturally extends both the Euclidean and Hermitian settings. These results will later be applied to the study of cyclic and abelian codes and their Galois duality in Section \ref{sec5}.

Let $q$ be a prime power and let  $G$ be a finite abelian group written additively such that $\gcd(q,|G|)=1$.  For $g\in G$ and a positive integer $k$, the order of $g$ in $G$ is denoted by $\ord(g)$ and the \emph{$q^k$-cyclotomic class} containing $g$ is defined to be
\[
S_{q^k}(g)=\{q^{kj}g:j\ge 0\}.
\]

The next lemma gives the cardinality of a $q^k$-cyclotomic class in terms of the order of any of its elements; see, for example, \cite{PJ2018}.
 
\begin{lemma}
\label{lem:cosetsize}
Let $q$ be a prime power and let $G$ be a finite abelian group such that $\gcd(q,|G|)=1$. If $g\in G$ has order $d$, then
$
|S_{q^k}(g)|=\ord_d(q^k)$.
\end{lemma}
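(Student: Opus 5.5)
The plan is to describe $S_{q^k}(g)$ as the orbit of $g$ under repeated multiplication by $q^k$, and to show that the orbit length equals the multiplicative order of $q^k$ modulo $d=\ord(g)$.

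First, since $\gcd(q,|G|)=1$ and $d$ divides $|G|$, we have $\gcd(q^k,d)=1$. Hence $\ord_d(q^k)$ is defined; write $r=\ord_d(q^k)$. Multiplication by $q^k$ on the cyclic subgroup $\langle g\rangle\cong\Z_d$ is then an automorphism.

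The key step is the standard equivalence
\[
q^{ki}g=q^{kj}g \iff d\mid (q^{ki}-q^{kj}).
\]
This holds because $(q^{ki}-q^{kj})g=0$ exactly when $d$ divides $q^{ki}-q^{kj}$, by the definition of the order of $g$. Taking $i\ge j$ and cancelling the unit $q^{kj}$ modulo $d$, the condition becomes $q^{k(i-j)}\equiv 1\pmod d$, which holds if and only if $r\mid (i-j)$.

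It follows that the elements $q^{kj}g$ for $0\le j\le r-1$ are pairwise distinct. Every $q^{kj}g$ with $j\ge 0$ equals $q^{k(j \bmod r)}g$, so the orbit contains nothing beyond these $r$ elements. Therefore $|S_{q^k}(g)|=r=\ord_d(q^k)$.

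No step is a genuine obstacle. The only point needing care is the cancellation of $q^{kj}$, which relies on the coprimality of $q$ and $d$ established in the first step.
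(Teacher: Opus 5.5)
Your proof is correct and complete. The reduction of $q^{ki}g=q^{kj}g$ to $d\mid q^{kj}\bigl(q^{k(i-j)}-1\bigr)$, the cancellation of $q^{kj}$ using $\gcd(q,d)=1$ (which you correctly get from $d\mid |G|$), and the identification of the orbit with $\{q^{kj}g : 0\le j<\ord_d(q^k)\}$ are all sound, including the degenerate case $d=1$.

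The paper does not prove this lemma; it only refers to the literature. Your orbit-counting argument is the standard proof behind that citation. Your remark that multiplication by $q^k$ is an automorphism of $\langle g\rangle$ is true but not needed.
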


Let $G$ be a finite abelian group of order $n$ and exponent $N$ such that $\gcd(n,q)=1$. Then also $\gcd(N,q)=1$. For integers $0\le T<k$ and $g\in G$, the $q^k$-cyclotomic class $S_{q^k}(g)$ is said to be \emph{of type $T$} if
\[
S_{q^k}(g)=S_{q^k}(-q^Tg),
\]
and \emph{of type $T'$} otherwise. The following example illustrates the notion of type $T$ for $q^k$-cyclotomic classes.

\begin{example}\label{ex:z35-q3k4}
Let $q=3$, $k=4$, and $G=\Z_{35}$. Then the $q^k$-cyclotomic classes are the $81$-cyclotomic classes in $\Z_{35}$, namely
$
S_{81}(0)=\{0\}$, $
S_{81}(1)=S_{81}(11)=S_{81}(16)=\{1,11,16\}$,
$S_{81}(2)=S_{81}(22)=S_{81}(32)=\{2,22,32\}$, $
S_{81}(3)=S_{81}(33)=S_{81}(13)=\{3,33,13\}$, $
S_{81}(4)=S_{81}(9)=S_{81}(29)=\{4,9,29\}$, $
S_{81}(5)=S_{81}(20)=S_{81}(10)=\{5,20,10\}$, $
S_{81}(6)=S_{81}(31)=S_{81}(26)=\{6,31,26\}$, $
S_{81}(7)=\{7\}$, $
S_{81}(8)=S_{81}(18)=S_{81}(23)=\{8,18,23\}$, $
S_{81}(12)=S_{81}(27)=S_{81}(17)=\{12,27,17\}$, $
S_{81}(14)=\{14\}$, $
S_{81}(15)=S_{81}(25)=S_{81}(30)=\{15,25,30\}$, $
S_{81}(19)=S_{81}(34)=S_{81}(24)=\{19,34,24\}$, $
S_{81}(21)=\{21\}$, $
S_{81}(28)=\{28\}$.

For $T=0$, the action is $g\mapsto -g$. In $\Z_{35}$,  we have  $
-0=0 \in S_{81}(0)$, 
$ -1=34 \notin S_{81}(1)$,  
$ -2=33 \notin S_{81}(2) $,  
$ -3=32 \notin S_{81}(3)$,  
$ -4=31 \notin S_{81}(4)$, 
$ -5=30 \notin S_{81}(5)$, 
 $
-6=29 \notin S_{81}(6)$, 
$ -7=28 \notin S_{81}(7)$,  
$ -8=27 \notin S_{81}(8)$,  
$ -12=23 \notin S_{81}(12)$,  
$ -14=21 \notin S_{81}(14)$,  
$ -15=20 \notin S_{81}(15)$, 
$-19=16 \notin S_{81}(19)$,  
$ -21=14 \notin S_{81}(21)$,  
$ -28=7\notin S_{81}(28)$. 
Hence, only $S_{81}(0)$ is of type $0$, and every other $81$-cyclotomic class is of type $0'$.

For $T=1$, the action is $g\mapsto -3g$. In $\Z_{35}$, we have 
  $
-3\cdot 0=0 \in S_{81}(0)$,  
$ -3\cdot 1=32 \notin S_{81}(1) $,  
$ -3\cdot 2=29 \notin S_{81}(2)$,  
$ -3\cdot 3=26  \notin S_{81}(3)$,  
$ -3\cdot 4=23 \notin S_{81}(4)$,
 $ 
-3\cdot 5=20 \in S_{81}(5)$,  
$ -3\cdot 6=17 \notin S_{81}(6)$,  
$ -3\cdot 7=14  \notin S_{81}(7)$,  
$ -3\cdot 8=11  \notin S_{81}(8)$,  
$ -3\cdot 12=34 \notin S_{81}(12)$, 
 $
-3\cdot 14=28  \notin S_{81}(14)$,  
$ -3\cdot 15=25 \in S_{81}(15)$,  
$ -3\cdot 19=13 \notin S_{81}(19)$,  
$ -3\cdot 21=7 \notin S_{81}(21)$,  
$ -3\cdot 28=21 \notin S_{81}(28)
 $.
Hence, 
  $S_{81}(0)$, $S_{81}(5)$, and $S_{81}(15)$ are of type $1$ and all the remaining $81$-cyclotomic classes are of type $1'$.

For $T=2$ and $T=3$, the types of the $81$-cyclotomic classes are determined in the same manner. The resulting type distribution of $81$-cyclotomic classes in $\Z_{35}$ is presented in Table~\ref{T-35}.

\begin{table}[!hbt]
\centering
\caption{Type distribution of $81$-cyclotomic classes in $\Z_{35}$}  \label{T-35} 
\begin{tabular}{l|cccc}
\hline
\text{$81$-cyclotomic class} & $T=0$ & $T=1$ & $T=2$ & $T=3$\\
\hline
$S_{81}(0)=\{0\}$              & $0$  & $1$  & $2$  & $3$  \\
$S_{81}(1)=\{1,11,16\}$        & $0'$ & $1'$ & $2'$ & $3'$ \\
$S_{81}(2)=\{2,22,32\}$        & $0'$ & $1'$ & $2'$ & $3'$ \\
$S_{81}(3)=\{3,33,13\}$        & $0'$ & $1'$ & $2'$ & $3'$ \\
$S_{81}(4)=\{4,9,29\}$         & $0'$ & $1'$ & $2'$ & $3'$ \\
$S_{81}(5)=\{5,20,10\}$        & $0'$ & $1$  & $2'$ & $3$  \\
$S_{81}(6)=\{6,31,26\}$        & $0'$ & $1'$ & $2'$ & $3'$ \\
$S_{81}(7)=\{7\}$              & $0'$ & $1'$ & $2$  & $3'$ \\
$S_{81}(8)=\{8,18,23\}$        & $0'$ & $1'$ & $2'$ & $3'$ \\
$S_{81}(12)=\{12,27,17\}$      & $0'$ & $1'$ & $2'$ & $3'$ \\
$S_{81}(14)=\{14\}$            & $0'$ & $1'$ & $2$  & $3'$ \\
$S_{81}(15)=\{15,25,30\}$      & $0'$ & $1$  & $2'$ & $3$  \\
$S_{81}(19)=\{19,34,24\}$      & $0'$ & $1'$ & $2'$ & $3'$ \\
$S_{81}(21)=\{21\}$            & $0'$ & $1'$ & $2$  & $3'$ \\
$S_{81}(28)=\{28\}$            & $0'$ & $1'$ & $2$  & $3'$ \\
\hline
\end{tabular}
\end{table}
\end{example}

\subsection{Characterization}
The characterization of $q^k$-cyclotomic classes of type $T$ is given in this subsection. It shows that the type of a class is determined entirely by the order of its elements through membership in the  set $G_{(T,k)}(q,1)$.

\begin{theorem}\label{thm:abelian-type}
Let $q$ be a prime power and let   $k$ and $T$ be integers such that $0\le T<k$.
Let $G$ be a finite abelian group such that $\gcd(q,|G|)=1$ and  let $g\in G$ be such that $\ord(g)=d$. Then $
S_{q^k}(g)$    is of type $T$
if and only if $
d\in G_{(T,k)}(q,1)$.
In particular, the type of $S_{q^k}(g)$ depends only on the order of $g$.
\end{theorem}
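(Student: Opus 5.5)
The plan is to unwind the definition of type $T$ into a congruence on $q$ modulo $d$, and then recognize that congruence as the defining divisibility for $G_{(T,k)}(q,1)$.

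First, note that $S_{q^k}(g)=S_{q^k}(-q^Tg)$ holds if and only if $-q^Tg\in S_{q^k}(g)$, since cyclotomic classes are orbits of the map $x\mapsto q^kx$ and therefore partition $G$. So type $T$ is equivalent to the existence of an integer $j\ge 0$ with $-q^Tg=q^{kj}g$. Because $g$ has order $d$, this means $d\mid (q^{kj}+q^T)$. Since $\gcd(q,d)=1$, $q$ is invertible modulo $d$, and multiplying by $q^{k-T}$ gives the equivalent condition $d\mid (q^{k(j+1)}+1)$.

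The exponents must now be matched with the form $ks+T$ with $s\ge 1$. Write $\omega=\ord_d(q^k)$, so that $q^{k\omega}\equiv 1\pmod d$.

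Suppose $-q^Tg=q^{kj}g$. Then $q^{kj}\equiv -q^{T}\pmod d$. Multiplying by $q^{k(s)}$ and choosing $s\ge 1$ with $ks\equiv -kj+\dots$ is awkward, so instead work directly with the relation
\[
q^{ks+T}\equiv -1\pmod d \iff q^{ks}\equiv -q^{-T}\pmod d.
\]
From $q^{kj}g=-q^Tg$, applying $q^{k(\omega m-j)}$ for a large $m$ gives $g=-q^{T}q^{k(\omega m-j)}g$. Hence $-q^{k s+T}\equiv 1 \pmod d$ with $s=\omega m-j\ge 1$, that is, $d\mid (q^{ks+T}+1)$.

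Conversely, suppose $d\mid (q^{ks+T}+1)$ for some $s\ge 1$. Then $-q^Tg=q^{-ks}g=q^{k(\omega m-s)}g$, where $m$ is chosen large enough that $\omega m-s\ge 0$. This exhibits $-q^Tg$ as an element of $S_{q^k}(g)$.

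For $d=1$, that is $g=0$, both sides hold trivially: the class $\{0\}$ is of type $T$, and $1\in G_{(T,k)}(q,1)$. This is consistent with Lemma~\ref{lem:coprime-to-ab}, which requires $\gcd(d,q)=1$, and indeed $\gcd(d,q)=1$ holds because $d$ divides $|G|$.

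The only real care point is the bookkeeping of nonnegative versus positive exponents, which is handled by periodicity modulo $\omega$ using Lemma~\ref{lem:cosetsize}. The final assertion, that the type of $S_{q^k}(g)$ depends only on the order of $g$, is immediate because the criterion $d\in G_{(T,k)}(q,1)$ involves only $d=\ord(g)$.
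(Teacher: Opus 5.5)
Your proof is correct and has the same skeleton as the paper's. Both unwind type $T$ into a divisibility by $d$, then use periodicity modulo $\omega=\ord_d(q^k)$ to force a positive exponent. The difference is orientation.

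The paper reads type $T$ as $g\in S_{q^k}(-q^Tg)$, i.e.\ $q^{kj}(-q^Tg)=g$. This gives $d\mid (q^{kj+T}+1)$ with the exponent already of the form $kj+T$, so the only repair needed is replacing $j=0$ by $j=\omega$. Conversely, $d\mid(q^{ks+T}+1)$ gives $g\in S_{q^k}(-q^Tg)$ at once, and the paper then asserts equality of classes, tacitly using that the classes partition $G$.

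You read type $T$ as $-q^Tg\in S_{q^k}(g)$, which gives $q^{kj}\equiv -q^T\pmod d$ with $q^{kj}$ on the wrong side. So both of your directions need an inversion via $q^{k\omega m}\equiv 1\pmod d$ for large $m$. This is slightly more work. In return, you state the partition property explicitly, so your passage from a single membership to equality of classes is fully justified.

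Delete the sentence claiming that multiplying by $q^{k-T}$ turns $d\mid(q^{kj}+q^T)$ into the equivalent condition $d\mid(q^{k(j+1)}+1)$. The product is actually $q^{kj+k-T}+q^k$, and the equivalence is false in general. For $q=2$, $k=2$, $T=1$, $d=3$, one has $3\mid 4^j+2$ for every $j$, but $3\nmid 4^{j+1}+1$. Also delete the unfinished remark about choosing $s$ with $ks\equiv -kj+\cdots$. You never use either statement afterwards, so removing both leaves a complete argument.
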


\begin{proof}
Assume first that $S_{q^k}(g)$ is of type $T$. Then
$
S_{q^k}(g)=S_{q^k}(-q^Tg)$.
Since $g\in S_{q^k}(g)$, it follows that
$
g\in S_{q^k}(-q^Tg).
$
Hence,  there exists an integer $j\ge 0$ such that
$
q^{kj}(-q^Tg)=g$.
Equivalently, $
(q^{kj+T}+1)g=0$.
Since $g$ has order $d$, this holds if and only if $
d\mid (q^{kj+T}+1)$.
If $j\ge 1$, then $
d\in G_{(T,k)}(q,1)$. Assume that $j=0$. Then $
d\mid (q^T+1)$.
Let $m=\ord_d(q^k)$. Then $m\ge 1$ and
$
q^{km+T}\equiv q^T\equiv -1\pmod d$.
Hence, 
$
d\mid (q^{km+T}+1)$,
which implies that $
d\in G_{(T,k)}(q,1)$.

Conversely, assume that $
d\in G_{(T,k)}(q,1)$.
Then there exists an integer $s\ge 1$ such  that $
d\mid (q^{ks+T}+1)$.
Equivalently,
$
(q^{ks+T}+1)g=0$.
Thus, $
q^{ks}(-q^Tg)=g$,
which implies  that
$
g\in S_{q^k}(-q^Tg)$.
Hence,  $
S_{q^k}(g)=S_{q^k}(-q^Tg)$.
Therefore, $S_{q^k}(g)$ is of type $T$.

The final statement is immediate since the right hand side depends only on $d=\ord(g)$.
\end{proof}

\begin{example}
Let $q=3$, $k=4$, and $G=\Z_{35}$. By Example~\ref{ex:z35-q3k4}, the $q^k$-cyclotomic classes, namely the $81$-cyclotomic classes in $\Z_{35}$, are known explicitly. We  determine their types using Theorem~\ref{thm:abelian-type} and the arithmetic sets $G_{(T,4)}(3,1)$. We note that  $
\ord(0)=1$, $
\ord(1)=\ord(2)=\ord(3)=\ord(4)=\ord(6)=\ord(8)=\ord(12)=\ord(19)=35$, $
\ord(5)=\ord(15)=7$,
and
$
\ord(7)=\ord(14)=\ord(21)=\ord(28)=5$.
Since 
$
1\in G_{(T,4)}(3,1)$ for all $T=0,1,2,3$, 
$35\notin G_{(0,4)}(3,1)$,
$35\notin G_{(1,4)}(3,1)$,
$35\notin G_{(2,4)}(3,1)$,
$35\notin G_{(3,4)}(3,1)$,
$7\notin G_{(0,4)}(3,1)$,
$7\in G_{(1,4)}(3,1)$, 
$7\notin G_{(2,4)}(3,1)$,
$7\in G_{(3,4)}(3,1)$,
and
$
5\notin G_{(0,4)}(3,1)$,
$5\notin G_{(1,4)}(3,1)$,
$5\in G_{(2,4)}(3,1)$,
$5\notin G_{(3,4)}(3,1)$,  by Theorem~\ref{thm:abelian-type},  it follows that  
$
S_{81}(0)$
is of types $0,1,2,3$, 
$
S_{81}(5)$ and  $ S_{81}(15)$
are of types $1$ and $3$, and of types $0'$ and $2'$, 
$
S_{81}(7), S_{81}(14), S_{81}(21)$, and $S_{81}(28)
$
are of type $2$, and of types $0',1',3'$,  and
$
S_{81}(1), S_{81}(2), S_{81}(3), S_{81}(4), S_{81}(6), S_{81}(8), S_{81}(12)$, and $S_{81}(19)
$
are of types $0',1',2',3'$.
\end{example}

 Now, we  examine how the types of cyclotomic classes interact as $T$ varies. To describe this propagation explicitly, we associate to each $T$ a subset $C_k(T)$ of $\Z_k$, which records all types forced by type $T$.

\begin{theorem}\label{thm:CkT-propagation}
Let $q$ be a prime power and let  $G$ be a finite abelian group  such that $\gcd(q, |G|)=1$. Let  $g\in G$ and let $k$ and $T$ be integers such that $0\le T<k$. Then $
S_{q^k}(g)\text{ is of type }T
$
if and only if
$
S_{q^k}(g) $ is of type $\ell$ for every $\ell\in C_k(T)$.
\end{theorem}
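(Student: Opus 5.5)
The plan is to translate the statement into arithmetic with Theorem~\ref{thm:abelian-type} and then prove the arithmetic statement directly. Proposition~\ref{prop:CkT-good} is not enough here: it only covers odd $d>1$, while $\ord(g)$ may be even. Let $d=\ord(g)$. By Theorem~\ref{thm:abelian-type}, $S_{q^k}(g)$ is of type $\ell$ if and only if $d\in G_{(\ell,k)}(q,1)$, for every $0\le \ell<k$. So it suffices to show that
\[
d\in G_{(T,k)}(q,1) \quad\Longleftrightarrow\quad d\in G_{(\ell,k)}(q,1)\ \text{ for every } \ell\in C_k(T).
\]

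The reverse implication is immediate. Taking $i=0$ in the definition of $C_k(T)$ gives $T\bmod k=T\in C_k(T)$, because $0\le T<k$.

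For the forward implication, I would use a parity argument that does not depend on whether $d$ is odd or even. Suppose $d\mid (q^{ks+T}+1)$ for some $s\ge1$, so that $q^{ks+T}\equiv -1\pmod d$. Let $\ell\in C_k(T)$, viewed as an integer in $[0,k)$, and write $\ell\equiv (2r+1)T\pmod k$ with $r\ge0$. Raising the congruence to the odd power $2r+1$ gives
\[
q^{(2r+1)(ks+T)}\equiv (-1)^{2r+1}=-1\pmod d .
\]
The exponent expands as $(2r+1)(ks+T)=k(2r+1)s+(2r+1)T$. Since $(2r+1)T\ge0$ and $\ell$ is its least nonnegative residue modulo $k$, we can write $(2r+1)T=\ell+km$ with an integer $m\ge0$. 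The exponent therefore equals $ks''+\ell$ with $s''=(2r+1)s+m\ge1$. Hence $d\mid (q^{ks''+\ell}+1)$, that is, $d\in G_{(\ell,k)}(q,1)$.

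The only point that needs care is keeping the new exponent of the form $ks''+\ell$ with $s''\ge1$ and $0\le \ell<k$. This works because $m\ge0$ and $s\ge1$, so no separate treatment of the $2$-adic or even case (Proposition~\ref{prop:even-case}) is needed. Alternatively, one could argue directly in the group: $q^{ks}(-q^Tg)=g$ gives $q^{ks+T}g=-g$, and iterating this $2r+1$ times yields $q^{ks''}(-q^{\ell}g)=g$, so $S_{q^k}(g)=S_{q^k}(-q^\ell g)$. This is the same computation without passing through Theorem~\ref{thm:abelian-type}.
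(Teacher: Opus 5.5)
Your proof is correct and is essentially the paper's argument. The paper works directly in $G$, iterating $q^{ks}g=-q^Tg$ to get $S_{q^k}(g)=S_{q^k}(-q^{(2i+1)T}g)$, which is your closing alternative; your version through Theorem~\ref{thm:abelian-type} is the same odd-power computation written modulo $d$, and your rewriting $(2r+1)T=\ell+km$ with $m\ge 0$ supplies the reduction modulo $k$ (namely $-q^{(2r+1)T}g=q^{km}(-q^{\ell}g)$) that the paper leaves implicit.
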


\begin{proof}
Assume that $S_{q^k}(g)$ is of type $T$. Then
$
S_{q^k}(g)=S_{q^k}(-q^Tg)$ and there exists an integer $s\ge 0$ such that
$
q^{ks}g=-q^Tg$.
Multiplying both sides by $-q^T$, we obtain
$
q^{ks}(-q^Tg)=q^{2T}g$,
and hence, $
S_{q^k}(g)=S_{q^k}(q^{2T}g)$.
Multiplying once more by $-q^T$, we get
$
q^{ks}(q^{2T}g)=-q^{3T}g$,
which implies that 
\[
S_{q^k}(g)=S_{q^k}(-q^{3T}g).
\]
Iterating this argument, we obtain
\[
S_{q^k}(g)=S_{q^k}(-q^{(2i+1)T}g)
 \] for all $i\ge 0$.
Thus,  $S_{q^k}(g)$ is of type $\ell$ for every $\ell\in C_k(T)$.

Conversely, since $T\in C_k(T)$, the reverse implication is immediate.
\end{proof}

Combining Theorem~\ref{thm:CkT-propagation} and Lemma~\ref{lem:CkT-symmetry}, we obtain the following consequence.

\begin{corollary}\label{cor:type-k-minus-T}
Let $q$ be a prime power and let  $G$ be a finite abelian group  such that $\gcd(q, |G|)=1$. Let  $g\in G$ and let $k$ and $T$ be integers such that $0\le T<k$.  The $
S_{q^k}(g)$  is of type $T$  if and only if $
S_{q^k}(g)$  is of type $k-T$.
\end{corollary}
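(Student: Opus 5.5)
The plan is to derive the statement directly from Theorem~\ref{thm:CkT-propagation} together with Lemma~\ref{lem:CkT-symmetry}, after one small observation that lets us treat the type parameter as a residue class modulo $k$. The observation is that for every integer $\ell\ge 0$ we have
\[
S_{q^k}(-q^{\ell+k}g)=S_{q^k}\bigl(q^k(-q^{\ell}g)\bigr)=S_{q^k}(-q^{\ell}g),
\]
because $q^k h\in S_{q^k}(h)$ for every $h\in G$, and two cyclotomic classes that share an element coincide. Hence the condition ``$S_{q^k}(g)=S_{q^k}(-q^{\ell}g)$'' depends only on $\ell \bmod k$. This makes ``type $\ell$'' meaningful for any nonnegative integer $\ell$, and in particular for $\ell=k-T$ when $T=0$, where $k-T=k$. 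In that case type $k$ is the same as type $0$, and the statement is trivially true. This case $T=0$ is the only point that needs care; the rest is bookkeeping.

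For $0<T<k$, Lemma~\ref{lem:CkT-symmetry} gives $C_k(T)=C_k(k-T)$. Since $T\in C_k(T)$ (take $i=0$), we have $k-T\in C_k(k-T)=C_k(T)$; alternatively, $k-T\equiv -T\equiv (2i+1)T \pmod k$ for suitable $i$ by the explicit description in Proposition~\ref{prop:CkT}.

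Now suppose $S_{q^k}(g)$ is of type $T$. By Theorem~\ref{thm:CkT-propagation}, it is of type $\ell$ for every $\ell\in C_k(T)$, and in particular of type $k-T$.

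Conversely, suppose $S_{q^k}(g)$ is of type $k-T$. Since $0<k-T<k$, we may apply Theorem~\ref{thm:CkT-propagation} with $k-T$ in place of $T$. This shows the class is of type $\ell$ for every $\ell\in C_k(k-T)=C_k(T)$, and in particular of type $T$.

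One could instead argue directly through Theorem~\ref{thm:abelian-type} and Corollary~\ref{cor:type-k-minus-T-good}, but that route only covers odd orders $d$. The even case would then need separate handling: when $q$ is odd, an element of even order $d$ requires checking the parity of $ks+T$ against that of $ks+k-T$. The cyclotomic-class argument above avoids this issue entirely, so it is the route I will take.
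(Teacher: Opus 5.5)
Your proof is correct and follows the paper's route: the paper obtains this corollary by combining Theorem~\ref{thm:CkT-propagation} with Lemma~\ref{lem:CkT-symmetry}, exactly as you do in both directions. Your preliminary observation that the type condition depends only on the exponent modulo $k$ is a worthwhile addition, because it gives meaning to ``type $k$'' when $T=0$, a case the paper leaves implicit (and the same observation is what justifies reducing $(2i+1)T$ modulo $k$ in the proof of Theorem~\ref{thm:CkT-propagation}).
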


\subsection{Enumeration}
The characterization obtained above leads directly to enumeration formulas for $q^k$-cyclotomic classes of type $T$ and type $T'$. Since the type of a class is determined by the order of its elements, it is enough to count how many classes arise from each divisor of the exponent of $G$.

Let $G$ be a finite abelian group of exponent $N$. For each divisor $d$ of $N$, let
$
\mathcal{N}_G(d)=\bigl|\{g\in G:\ord(g)=d\}\bigr|$ and let  $\varphi(d)$ denote the \emph{Euler phi function}.
The quantity $\mathcal{N}_G(d)$ is well known; see, for example, \cite{B1997}.  Especially,  we have   $\mathcal{N}_G(d)=\varphi(d)$  if $G$ is cyclic.

\begin{proposition}\label{prop:type-count}
Let $q$ be a prime power and let $G$ be a finite abelian group of exponent $N$ such that $\gcd(q,N)=1$.  Let $k$ and $T$ be integers such that $0\le T<k$. Then the number of $q^k$-cyclotomic classes of type $T$ is
\begin{equation}\label{eq:typeT-count}
\mathfrak{N}_T(G,q^k)
=
\sum_{\substack{d\mid N\\ d\in G_{(T,k)}(q,1)}}
\frac{\mathcal{N}_G(d)}{\ord_d(q^k)},
\end{equation}
and the number of $q^k$-cyclotomic classes of type $T'$ is
\begin{equation}\label{eq:typeTp-count}
\mathfrak{N}_{T'}(G,q^k)
=
\sum_{\substack{d\mid N\\ d\notin G_{(T,k)}(q,1)}}
\frac{\mathcal{N}_G(d)}{\ord_d(q^k)}.
\end{equation}
\end{proposition}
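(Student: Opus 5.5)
The plan is to partition $G$ by element order and count classes within each part. Every element of $G$ has order dividing the exponent $N$, so $G=\bigsqcup_{d\mid N}G_d$, where $G_d=\{g\in G:\ord(g)=d\}$ and $|G_d|=\mathcal{N}_G(d)$.

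The first step is to check that each $q^k$-cyclotomic class lies inside a single $G_d$. Since $\gcd(q,N)=1$, multiplication by $q^{kj}$ is an automorphism of $G$ and therefore preserves orders. Every element of $S_{q^k}(g)$ thus has order $\ord(g)$. The classes are the orbits of the cyclic group generated by multiplication by $q^k$, so they partition $G$. Restricting this partition to $G_d$, each class contained in $G_d$ has cardinality $\ord_d(q^k)$ by Lemma~\ref{lem:cosetsize}. Hence $G_d$ is the disjoint union of exactly $\mathcal{N}_G(d)/\ord_d(q^k)$ classes; in particular this quotient is an integer.

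The second step uses Theorem~\ref{thm:abelian-type}. A class $S_{q^k}(g)$ with $\ord(g)=d$ is of type $T$ precisely when $d\in G_{(T,k)}(q,1)$. So all classes in $G_d$ are of type $T$ when $d\in G_{(T,k)}(q,1)$, and all are of type $T'$ otherwise. Summing the counts $\mathcal{N}_G(d)/\ord_d(q^k)$ over the divisors $d\mid N$ in each of these two cases gives \eqref{eq:typeT-count} and \eqref{eq:typeTp-count}.

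No step is a real obstacle. The only points needing care are that order preservation requires $\gcd(q,N)=1$, and that the type depends only on $d$, which is exactly the final assertion of Theorem~\ref{thm:abelian-type}. One should also note that $d=1$ is included in the sum: the class $\{0\}$ is counted there, with $\mathcal{N}_G(1)=1$ and $\ord_1(q^k)=1$.
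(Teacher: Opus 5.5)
Your proof is correct and takes essentially the paper's approach: partition by element order, count the classes of order $d$ as $\mathcal{N}_G(d)/\ord_d(q^k)$ using Lemma~\ref{lem:cosetsize}, use Theorem~\ref{thm:abelian-type} to see that the type depends only on $d$, and sum. Your one addition is to check explicitly that each class lies inside a single $G_d$, because multiplication by $q^{kj}$ is an automorphism when $\gcd(q,N)=1$; the paper leaves this implicit.
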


\begin{proof}
Let $d$ be a divisor of $N$. By Theorem~\ref{thm:abelian-type}, a $q^k$-cyclotomic class containing an element of order $d$ is of type $T$ if and only if
$
d\in G_{(T,k)}(q,1).
$
Moreover, by Lemma~\ref{lem:cosetsize}, every such class has size $\ord_d(q^k)$. Since there are $\mathcal{N}_G(d)$ elements of order $d$, the number of $q^k$-cyclotomic classes whose elements have order $d$ is
\[
\frac{\mathcal{N}_G(d)}{\ord_d(q^k)}.
\]
Summing over all divisors $d$ of $N$ such that $d\in G_{(T,k)}(q,1)$ gives \eqref{eq:typeT-count}, while summing over all divisors $d$ of $N$ such that $d\notin G_{(T,k)}(q,1)$ gives \eqref{eq:typeTp-count}.
\end{proof}

We illustrate the preceding counting formulas by computing the numbers of $q^k$-cyclotomic classes of types $T$ and $T'$   in finite cyclic groups.

 \begin{example}\label{ex:enumeration-35}
We illustrate Proposition~\ref{prop:type-count} for the cyclic groups $
\Z_{35}$, $\Z_{70}$,   $\Z_{140}$,    and  $\Z_{280}$ 
with $q=3$ and $k=4$. For a cyclic group $\Z_n$, the number of elements of additive order $d\mid n$ is $
\mathcal{N}_{\Z_n}(d)=\varphi(d)$.
Hence, by Proposition~\ref{prop:type-count}, we have 
\[
\mathfrak N_T(\Z_n,3^4)
=
\sum_{\substack{d\mid n\\ d\in G_{(T,4)}(3,1)}}
\frac{\varphi(d)}{\ord_d(3^4)}.
\]
Now, we apply this formula for each $n\in\{35,70\}$ and each $T\in\{0,1,2,3\}$.

\medskip
\noindent
\textbf{Case $n=35$.}
Since the relevant divisors are
$
G_{(0,4)}(3,1)\cap\{d:d\mid 35\}=\{1\}$, $
G_{(1,4)}(3,1)\cap\{d:d\mid 35\}=\{1,7\}$, $
G_{(2,4)}(3,1)\cap\{d:d\mid 35\}=\{1,5\}$, $
G_{(3,4)}(3,1)\cap\{d:d\mid 35\}=\{1,7\}$,
we obtain
\[
\mathfrak N_0(\Z_{35},3^4)
=
\sum_{\substack{d\mid 35\\ d\in G_{(0,4)}(3,1)}}
\frac{\varphi(d)}{\ord_d(3^4)}
=
\frac{\varphi(1)}{\ord_1(3^4)}
=
\frac{1}{1}
=
1,
\]
\[
\mathfrak N_1(\Z_{35},3^4)
=
\sum_{\substack{d\mid 35\\ d\in G_{(1,4)}(3,1)}}
\frac{\varphi(d)}{\ord_d(3^4)}
=
\frac{\varphi(1)}{\ord_1(3^4)}
+\frac{\varphi(7)}{\ord_7(3^4)}
=
\frac{1}{1}+\frac{6}{3}
=
3,
\]
\[
\mathfrak N_2(\Z_{35},3^4)
=
\sum_{\substack{d\mid 35\\ d\in G_{(2,4)}(3,1)}}
\frac{\varphi(d)}{\ord_d(3^4)}
=
\frac{\varphi(1)}{\ord_1(3^4)}
+\frac{\varphi(5)}{\ord_5(3^4)}
=
\frac{1}{1}+\frac{4}{1}
=
5,
\] and 
\[
\mathfrak N_3(\Z_{35},3^4)
=
\sum_{\substack{d\mid 35\\ d\in G_{(3,4)}(3,1)}}
\frac{\varphi(d)}{\ord_d(3^4)}
=
\frac{\varphi(1)}{\ord_1(3^4)}
+\frac{\varphi(7)}{\ord_7(3^4)}
=
\frac{1}{1}+\frac{6}{3}
=
3.
\]

\medskip
\noindent
\textbf{Case $n=70$.}
Since
$
G_{(0,4)}(3,1)\cap\{d:d\mid 70\}=\{1,2\}$,
$
G_{(1,4)}(3,1)\cap\{d:d\mid 70\}=\{1,2,7,14\}$,
$
G_{(2,4)}(3,1)\cap\{d:d\mid 70\}=\{1,2,5,10\}$, $
G_{(3,4)}(3,1)\cap\{d:d\mid 70\}=\{1,2,7,14\}$,
it follows that 
\[
\mathfrak N_0(\Z_{70},3^4)
=
\frac{\varphi(1)}{\ord_1(3^4)}
+\frac{\varphi(2)}{\ord_2(3^4)}
=
\frac{1}{1}+\frac{1}{1}
=
2,
\]
\[
\mathfrak N_1(\Z_{70},3^4)
=
\frac{\varphi(1)}{\ord_1(3^4)}
+\frac{\varphi(2)}{\ord_2(3^4)}
+\frac{\varphi(7)}{\ord_7(3^4)}
+\frac{\varphi(14)}{\ord_{14}(3^4)}
=
\frac{1}{1}+\frac{1}{1}+\frac{6}{3}+\frac{6}{3}
=
6,
\]
\[
\mathfrak N_2(\Z_{70},3^4)
=
\frac{\varphi(1)}{\ord_1(3^4)}
+\frac{\varphi(2)}{\ord_2(3^4)}
+\frac{\varphi(5)}{\ord_5(3^4)}
+\frac{\varphi(10)}{\ord_{10}(3^4)}
=
\frac{1}{1}+\frac{1}{1}+\frac{4}{1}+\frac{4}{1}
=
10,
\]
and 
\[
\mathfrak N_3(\Z_{70},3^4)
=
\frac{\varphi(1)}{\ord_1(3^4)}
+\frac{\varphi(2)}{\ord_2(3^4)}
+\frac{\varphi(7)}{\ord_7(3^4)}
+\frac{\varphi(14)}{\ord_{14}(3^4)}
=
\frac{1}{1}+\frac{1}{1}+\frac{6}{3}+\frac{6}{3}
=
6.
\]
 
 The cases $n=140$ and $n=280$ can be computed in the same manner.
Therefore, the numbers of $81$-cyclotomic classes of type $T$ in these cyclic groups are summarized as follows:
\[
\begin{array}{c|cccc}
G & \mathfrak N_0(G,3^4) & \mathfrak N_1(G,3^4) & \mathfrak N_2(G,3^4) & \mathfrak N_3(G,3^4)\\
\hline
\Z_{35}  & 1 & 3  & 5  & 3 \\
\Z_{70}  & 2 & 6  & 10 & 6 \\
\Z_{140} & 2 & 12 & 10 & 12 \\
\Z_{280} & 2 & 12 & 10 & 12
\end{array}
\]

Similarly, the numbers of $81$-cyclotomic classes of type $T'$ are given by
\[
\begin{array}{c|cccc}
G & \mathfrak N_{0'}(G,3^4) & \mathfrak N_{1'}(G,3^4) & \mathfrak N_{2'}(G,3^4) & \mathfrak N_{3'}(G,3^4)\\
\hline
\Z_{35}  & 14 & 12 & 10 & 12 \\
\Z_{70}  & 28 & 24 & 20 & 24 \\
\Z_{140} & 38 & 28 & 30 & 28 \\
\Z_{280} & 78 & 68 & 70 & 68
\end{array}
\]
\end{example}

\section{Applications: Galois Duality of Cyclic Codes}
\label{sec5}

In this section, the arithmetic and orbit theoretic results developed in Section~\ref{sec4} are applied to cyclic codes over $\F_{q^k}$. The main point is that the action
$
i\mapsto -q^Ti$
on $q^k$-cyclotomic classes modulo $n$ describes the $\theta$-reciprocal behavior of the irreducible factors of $x^n-1$. We first study Galois linear complementary dual  (LCD) cyclic codes in the general setting, including repeated-root codes. We then turn to Galois self-dual repeated-root cyclic codes, where the correct description is orbit-wise in general and specializes to the familiar pairwise form in the involutory case. These families of codes are significant because they extend the classical Euclidean and Hermitian settings and play an important role in the study of algebraic duality and symmetry in coding theory.  The applications developed here are    general theory, and it would be of further interest to investigate specific code lengths and classes of codes for which the arithmetic properties of $G_{(T,k)}(q,1)$ give more explicit structural descriptions and sharper enumeration formulas.

\subsection{Cyclic codes over finite fields}

In this subsection, we recall the basic definitions and notation for cyclic codes over finite fields required in the study of Galois duality of cyclic codes. In particular, we review their description in terms of ideals in $\F_{q^k}[x]/\langle x^n-1\rangle$, together with the notions of generator polynomial, check polynomial, and Galois duality.

A \emph{linear code}  $C$ of length $n$ over $\F_{q^k}$ is a vector subspace
of $ \F_{q^k}^n$.
A linear code $C$ is called \emph{cyclic} if
\[ (c_{n-1},c_0,c_1,\dots,c_{n-2})\in C \text{ for all }
(c_0,c_1,\dots,c_{n-1})\in C.
\]
 Via the linear isomorphism
$
\pi:\F_{q^k}^n \rightarrow \frac{\F_{q^k}[x]}{\langle x^n-1\rangle}
$
defined by
\[
(c_0,c_1,\dots,c_{n-1}) \mapsto c_0+c_1x+\cdots+c_{n-1}x^{n-1},
\]
cyclic codes of length $n$ over $\F_{q^k}$  are precisely the ideals of the quotient ring
$
\frac{\F_{q^k}[x]}{\langle x^n-1\rangle}$.
Hence, every cyclic code has the form $
C\cong\langle g(x)\rangle$,
where $g(x)$ is the unique monic divisor of $x^n-1$ of minimal degree in $\pi(C)$. This polynomial is called the \emph{generator polynomial} of $C$; see, for example, \cite{JLX2011}.

For integers $0\le T<k$, let
\[
\theta(a)=a^{q^T} 
\]
be the corresponding {\em Galois automorphism} of $\F_{q^k}$. The associated \emph{Galois form}, or specifically the \emph{$\theta$-form}, on $\F_{q^k}^n$ is defined by
\[
[u,v]_{\theta}=\sum_{i=0}^{n-1}u_i\,\theta(v_i),
\]
for all 
$u=(u_0,\dots,u_{n-1})$ and $ v=(v_0,\dots,v_{n-1})$ in $ \F_{q^k}^n$.
The corresponding dual code is called the \emph{Galois dual}, or specifically the \emph{$\theta$-dual} of $C$ and it is given  by
\[
C^{\perp_\theta} =\{ v\in \F_{q^k}^n: [v,c]_{\theta} =0 \text{ for all } c\in C\}.
\]
For a cyclic code $C=\langle g(x)\rangle$ of length $n$ over $\F_{q^k}$ and 
$
h(x)=\frac{x^n-1}{g(x)}
$, we have  that  $C^{\perp_\theta}$  is again  a cyclic code  of length $n$ over $\F_{q^k}$ and 
\[ 
C^{\perp_\theta}=\langle h^{\ast,\theta}(x)\rangle.
\]

A cyclic code $C$ is called \emph{Galois LCD}, or specifically the \emph{$\theta$-LCD}  if
\[
C\cap C^{\perp_\theta}=\{0\},
\]
and it is called \emph{Galois self-dual}, or specifically the \emph{$\theta$-self-dual} if
\[
C=C^{\perp_\theta}.
\]
It follows that $C$ is  $\theta$-LCD  if and only if 
\begin{align}
  \gcd(g(x),  h^{\ast,\theta}(x) )=1\label{eq-LCD},
\end{align}
and $C$ is  $\theta$-self-dual if and only if 
\begin{align}
  g(x)=  h^{\ast,\theta}(x) \label{eq-SD},
\end{align}

\subsection{\texorpdfstring{$\theta$}{theta}-self-reciprocal irreducible factors of $x^n-1$ over $\F_{q^k}$}

We consider the simple root case $\gcd(n,q)=1$. For integers $0\leq T<k$, let $\theta$ be the Galois automorphism of $\F_{q^k}$ defined by
\[
\theta(\alpha)=\alpha^{q^T}.
\]
For a polynomial
$
f(x)=f_0+f_1x+\cdots+f_mx^m\in \F_{q^k}[x]
$
with $f_0, f_m\ne 0$, define its \emph{$\theta$-reciprocal} by
\[
f^{\ast,\theta}(x)=\theta(f_0)^{-1}x^m\theta\bigl(f(x^{-1})\bigr).
\]
A monic polynomial $f$ with nonzero constant term is called \emph{$\theta$-self-reciprocal} if
$
f^{\ast,\theta}(x)=f(x)$.

Fix $n\ge 1$ with $\gcd(n,q)=1$, let $\zeta$ be a primitive $n$th root of unity in an extension field of   $\F_{q^k}$. For each $q^k$-cyclotomic class $Q\subseteq \Z_n$, define
\[
f_Q(x)=\prod_{i\in Q}(x-\zeta^i).
\]
Then $f_Q(x)$ is a monic irreducible factor of $x^n-1$ over $\F_{q^k}$, and every monic irreducible factor of $x^n-1$  is obtained  in this way.

\begin{theorem}\label{thm:reciprocal-coset}
For every $q^k$-cyclotomic class $Q\subseteq \Z_n$,
\[
f_Q^{\ast,\theta}(x)=f_{-q^TQ}(x),
\]
where
$
-q^TQ=\{-q^Ti \bmod n : i\in Q\}$.
Consequently, $f_Q$ is $\theta$-self-reciprocal if and only if
\[
Q=-q^TQ,
\]
or equivalently, if and only if every element of $Q$ has additive order in $G_{(T,k)}(q,1)$.
\end{theorem}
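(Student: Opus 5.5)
I will compute $f_Q^{\ast,\theta}$ directly from the factored form of $f_Q$ and the definition of the $\theta$-reciprocal. The key point is to extend $\theta$ to a splitting field: take $\F_{q^{km}}\supseteq\F_{q^k}$ containing $\zeta$, and let $\sigma(\beta)=\beta^{q^T}$ on $\F_{q^{km}}$. This $\sigma$ restricts to $\theta$ on $\F_{q^k}$. Since $f_Q$ has coefficients in $\F_{q^k}$, applying $\theta$ to the coefficients of $f_Q$ agrees with applying $\sigma$ to each root. Thus
\[
\theta\bigl(f_Q\bigr)(x)=\prod_{i\in Q}\bigl(x-\zeta^{q^Ti}\bigr).
\]

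Write $m=|Q|=\deg f_Q$. Substituting $x^{-1}$ and multiplying by $x^m$ gives
\[
x^m\theta\bigl(f_Q(x^{-1})\bigr)=\prod_{i\in Q}\bigl(1-\zeta^{q^Ti}x\bigr)
=\prod_{i\in Q}\bigl(-\zeta^{q^Ti}\bigr)\prod_{i\in Q}\bigl(x-\zeta^{-q^Ti}\bigr).
\]
The constant term of $f_Q$ is $f_0=\prod_{i\in Q}(-\zeta^i)$, so $\theta(f_0)=\prod_{i\in Q}(-\zeta^{q^Ti})$. This is exactly the scalar above, so dividing by $\theta(f_0)$ leaves the monic polynomial $\prod_{i\in Q}(x-\zeta^{-q^Ti})$.

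Next I check that $i\mapsto -q^Ti \bmod n$ is injective on $\Z_n$, which holds because $\gcd(q,n)=1$. Hence the roots $\zeta^{-q^Ti}$, $i\in Q$, are distinct, and the product equals $f_{-q^TQ}(x)$. I also note that $-q^TQ$ is again a $q^k$-cyclotomic class, namely $S_{q^k}(-q^Tg)$ for any $g\in Q$, since multiplication by $-q^T$ commutes with multiplication by $q^k$. So $f_{-q^TQ}$ is one of the irreducible factors. The one point needing care is that $\theta$ on coefficients can be transferred to $\sigma$ on the roots; this is the main obstacle, and it is handled by the extension $\sigma$ chosen above.

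For the consequence, distinct cyclotomic classes give distinct polynomials $f_Q$, because the roots $\zeta^i$ are distinct and $\zeta$ is primitive. Therefore $f_Q^{\ast,\theta}=f_Q$ if and only if $-q^TQ=Q$. Writing $Q=S_{q^k}(g)$, this says $S_{q^k}(g)=S_{q^k}(-q^Tg)$, i.e. $Q$ is of type $T$. By Theorem~\ref{thm:abelian-type} applied to $G=\Z_n$, this holds if and only if $\ord(g)\in G_{(T,k)}(q,1)$. All elements of $Q$ have the same additive order, because $q^k$ is a unit modulo $n$. Hence the condition is equivalent to every element of $Q$ having additive order in $G_{(T,k)}(q,1)$.
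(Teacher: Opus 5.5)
Your proof is correct and follows the same route as the paper: you identify the roots of $f_Q^{\ast,\theta}$ as $\zeta^{-q^Ti}$ for $i\in Q$, conclude $f_Q^{\ast,\theta}=f_{-q^TQ}$, and then invoke Theorem~\ref{thm:abelian-type} with $G=\Z_n$. You also make explicit several points the paper leaves implicit: extending $\theta$ to a Frobenius power on the splitting field, checking that the normalizing scalar $\theta(f_0)^{-1}$ cancels exactly, showing that distinct classes give distinct $f_Q$, and noting that all elements of $Q$ have the same order (though you may want to rename your extension $\sigma$ to avoid clashing with the paper's permutation $\sigma(Q)=-q^TQ$).
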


\begin{proof}
The roots of $f_Q(x)$ are precisely the elements $\zeta^i$ with $i\in Q$. Passing to $x^{-1}$ replaces each root $\zeta^i$ by $\zeta^{-i}$, and applying $\theta$ to the coefficients raises the roots to the $q^T$th power. Hence, the roots of $f_Q^{\ast,\theta}(x)$ are exactly
$
(\zeta^{-i})^{q^T}=\zeta^{-q^Ti}$ for all $i\in Q$,
which is precisely the root set attached to $-q^TQ$. Therefore,
\[
f_Q^{\ast,\theta}(x)=f_{-q^TQ}(x).
\]
The final statement follows from Theorem~\ref{thm:abelian-type} with $G=\Z_n$.
\end{proof}

To describe the full factorization of $x^n-1$, let $\sigma$ be the permutation of the set of $q^k$-cyclotomic classes modulo $n$ defined by
\[
\sigma(Q)=-q^TQ.
\]

\begin{corollary}\label{cor:factorization-orbits}
Let $\Omega$ be a set of representatives of the $\sigma$-orbits on the set of $q^k$-cyclotomic classes modulo $n$. For each $Q\in \Omega$, let $
\mathcal O(Q)=\{Q,\sigma(Q),\sigma^2(Q),\dots,\sigma^{r_Q-1}(Q)\}$
be the $\sigma$-orbit of $Q$, where $r_Q=|\mathcal O(Q)|$. Then
\[
x^n-1=\prod_{Q\in \Omega}\ \prod_{R\in \mathcal O(Q)} f_R(x).
\]
Moreover, $f_Q(x)$ is $\theta$-self-reciprocal if and only if $r_Q=1$.
\end{corollary}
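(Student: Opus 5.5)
The plan has three parts: show that $\sigma$ is a well-defined permutation of the set of $q^k$-cyclotomic classes modulo $n$, deduce the factorization from the partition of $\Z_n$ into cyclotomic classes, and read off the self-reciprocity criterion from Theorem~\ref{thm:reciprocal-coset}.

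\textbf{Step 1: $\sigma$ is a permutation.} For a class $Q=S_{q^k}(i)$ we have
\[
-q^TQ=\{-q^T q^{kj} i : j\ge 0\}=\{q^{kj}(-q^T i): j\ge 0\}=S_{q^k}(-q^Ti),
\]
so $\sigma(Q)$ is again a $q^k$-cyclotomic class. The map $i\mapsto -q^Ti$ is a bijection of $\Z_n$ because $\gcd(q,n)=1$. Hence $\sigma$ is injective on the finite set of classes, and therefore a permutation.

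\textbf{Step 2: the factorization.} Since $\sigma$ is a permutation, its orbits partition the set of $q^k$-cyclotomic classes. Each orbit $\mathcal O(Q)$ has the stated form $\{Q,\sigma(Q),\dots,\sigma^{r_Q-1}(Q)\}$, with $r_Q$ the least positive integer satisfying $\sigma^{r_Q}(Q)=Q$. The cyclotomic classes themselves partition $\Z_n$. Since $\zeta$ is a primitive $n$th root of unity, we have
\[
x^n-1=\prod_{i\in\Z_n}(x-\zeta^i)=\prod_{Q}f_Q(x).
\]
Regrouping this product over the orbit representatives $Q\in\Omega$ gives the displayed identity.

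\textbf{Step 3: the criterion.} By Theorem~\ref{thm:reciprocal-coset}, $f_Q$ is $\theta$-self-reciprocal if and only if $Q=-q^TQ=\sigma(Q)$. The condition $\sigma(Q)=Q$ holds exactly when the orbit of $Q$ is a singleton, that is, when $r_Q=1$.

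No step presents a real obstacle. The only point needing care is Step 1: one must check that $\sigma$ sends classes to classes and is bijective, since that is what guarantees the orbits are genuine cycles with no repetitions. This in turn is what makes the product over orbits run over each irreducible factor exactly once.
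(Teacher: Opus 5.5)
Your proposal is correct and follows essentially the same route as the paper: regroup the factorization $x^n-1=\prod_Q f_Q(x)$ along the $\sigma$-orbits, then read off the self-reciprocity criterion from Theorem~\ref{thm:reciprocal-coset}. Your Step 1 checks that $\sigma(S_{q^k}(i))=S_{q^k}(-q^Ti)$ and that $\sigma$ is bijective because $\gcd(q,n)=1$; this supplies a detail the paper simply assumes when it calls $\sigma$ a permutation.
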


\begin{proof}
By Theorem~\ref{thm:reciprocal-coset},
$
f_Q^{\ast,\theta}(x)=f_{\sigma(Q)}(x)$.
Hence,  repeated application of $\ast,\theta$ permutes the irreducible factors according to the $\sigma$-orbits on the set of $q^k$-cyclotomic classes modulo $n$. Partitioning the set of cyclotomic classes into these orbits allows the stated factorization. The final assertion is immediate from Theorem~\ref{thm:reciprocal-coset}.
\end{proof}

\begin{corollary}\label{cor:factorization-separate}
Let $\Omega_1$ be the set of representatives $Q$ of the $\sigma$-orbits on the set of $q^k$-cyclotomic classes modulo $n$ such that $|\mathcal O(Q)|=1$, and let $\Omega_2$ be the set of representatives $Q$ such that $|\mathcal O(Q)|>1$. Then
\[
x^n-1=
\left(\prod_{Q\in \Omega_1} f_Q(x)\right)
\left(\prod_{Q\in \Omega_2}\ \prod_{R\in \mathcal O(Q)} f_R(x)\right).
\]
Moreover, the factors $f_Q(x)$ with $Q\in \Omega_1$ are precisely the monic irreducible $\theta$-self-reciprocal factors of $x^n-1$, whereas each factor belonging to an orbit represented by some $Q\in \Omega_2$ is not $\theta$-self-reciprocal.
\end{corollary}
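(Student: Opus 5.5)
The plan is to read Corollary~\ref{cor:factorization-separate} as a regrouping of the factorization in Corollary~\ref{cor:factorization-orbits}, with the self-reciprocity statements taken from Theorem~\ref{thm:reciprocal-coset}.

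\textbf{Step 1: the product formula.} Since $\gcd(n,q)=1$, the polynomial $x^n-1$ is separable over $\F_{q^k}$. Its roots are $\zeta^i$ for $i\in\Z_n$. The $q^k$-cyclotomic classes partition $\Z_n$, so $x^n-1=\prod_Q f_Q(x)$, where $Q$ runs over all classes.

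\textbf{Step 2: well-definedness of the orbits.} The map $\sigma(Q)=-q^TQ$ sends a class to a class, because multiplication by $-q^T$ commutes with multiplication by $q^k$. It is a bijection, since $-q^T$ is invertible modulo $n$. Hence the $\sigma$-orbits partition the set of classes. The set $\Omega$ of orbit representatives splits as the disjoint union $\Omega=\Omega_1\cup\Omega_2$ according to whether the orbit size is $1$ or greater than $1$.

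\textbf{Step 3: the displayed identity.} Split the product of Corollary~\ref{cor:factorization-orbits} according to $\Omega_1$ and $\Omega_2$. For $Q\in\Omega_1$ the orbit is $\{Q\}$, so its inner product reduces to $f_Q(x)$.

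\textbf{Step 4: the self-reciprocity statements.}
\begin{itemize}
\item By Theorem~\ref{thm:reciprocal-coset}, $f_R$ is $\theta$-self-reciprocal if and only if $\sigma(R)=R$, that is, if and only if the orbit of $R$ has size $1$.
\item Every factor $f_R$ with $R$ in an orbit of size greater than $1$ therefore satisfies $\sigma(R)\neq R$, so it is not $\theta$-self-reciprocal. This applies to every $R$ in such an orbit, not only to its representative.
\item Conversely, every monic irreducible $\theta$-self-reciprocal factor is $f_R$ for some class $R$ with $\sigma(R)=R$. Then $R$ is the unique member of its orbit, so $R$ itself is the representative and lies in $\Omega_1$.
\item The correspondence $Q\mapsto f_Q$ is injective because the root sets are disjoint. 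So the $f_Q$ with $Q\in\Omega_1$ are exactly these factors, each listed once.
\end{itemize}

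None of these steps is a genuine obstacle. The only points needing care are that $\sigma$ is a well-defined permutation (Step 2) and that non-self-reciprocity holds for every member of an orbit of size greater than $1$, not just the chosen representative (Step 4).
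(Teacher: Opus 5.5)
Your proposal is correct and matches the paper's proof: both split the orbit factorization of Corollary~\ref{cor:factorization-orbits} into orbits of size $1$ and size greater than $1$, and both read off self-reciprocity from $f_Q^{\ast,\theta}=f_{\sigma(Q)}$ in Theorem~\ref{thm:reciprocal-coset}. Your additional checks fill in points the paper leaves implicit: that $\sigma$ is a well-defined permutation of the classes, that $Q\mapsto f_Q$ is injective, and the converse half of ``precisely'', namely that every self-reciprocal irreducible factor comes from $\Omega_1$.
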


\begin{proof}
By Corollary~\ref{cor:factorization-orbits},
\[
x^n-1=\prod_{Q\in \Omega}\ \prod_{R\in \mathcal O(Q)} f_R(x),
\]
where $\Omega$ is a set of representatives of the $\sigma$-orbits. Partition $\Omega$ as
\[
\Omega=\Omega_1\cup \Omega_2,
\]
where $\Omega_1$ consists of the representatives of the orbits of size $1$, and $\Omega_2$ consists of the representatives of the orbits of size greater than $1$. This gives the stated factorization.

If $Q\in \Omega_1$, then $|\mathcal O(Q)|=1$, so $\sigma(Q)=Q$. Hence, by Theorem~\ref{thm:reciprocal-coset},
\[
f_Q^{\ast,\theta}(x)=f_{\sigma(Q)}(x)=f_Q(x),
\]
and  $f_Q(x)$ is therefore $\theta$-self-reciprocal.

Conversely, if $Q\in \Omega_2$, then $|\mathcal O(Q)|>1$, so $\sigma(Q)\neq Q$. Again by Theorem~\ref{thm:reciprocal-coset},
\[
f_Q^{\ast,\theta}(x)=f_{\sigma(Q)}(x)\neq f_Q(x),
\]
and hence, $f_Q(x)$ is not $\theta$-self-reciprocal. The same argument applies to every factor $f_R(x)$ with $R\in \mathcal O(Q)$.
\end{proof}

\begin{remark}
By Theorem~\ref{thm:reciprocal-coset}, a $q^k$-cyclotomic class $Q$ belongs to $\Omega_1$ if and only if every element of $Q$ has additive order in $G_{(T,k)}(q,1)$. Hence, the self-reciprocal part of the factorization of $x^n-1$ is determined precisely by the arithmetic set $G_{(T,k)}(q,1)$.
\end{remark}

The following example illustrates how the factorization of $x^{17}-1$ over $\F_{2^4}$ is organized by the $\sigma$-orbits corresponding to the values $T=0,1,2,$ and $3$.
\begin{example}\label{ex:x17minus1-F16}
Let $\zeta$ be a primitive $17$th root of unity  in some extension field of  $\F_{2^4}$. The $2^4$-cyclotomic classes modulo $17$ are
$
Q_0=\{0\}$, $
Q_1=\{1,16\}$,
$
Q_2=\{2,15\}$,
$
Q_3=\{3,14\}$,
$
Q_4=\{4,13\}$,
$
Q_5=\{5,12\}$,
$
Q_6=\{6,11\}$,
$
Q_7=\{7,10\}$,
$
Q_8=\{8,9\}$.
For each $i\in\{1,2,\dots,8\}$, let
\[
f_i(x)=f_{Q_i}(x)=\prod_{j\in Q_i}(x-\zeta^j)
=(x-\zeta^i)(x-\zeta^{-i}).
\]
Then each $f_i(x)$ is an irreducible quadratic polynomial over $\F_{2^4}$, and
\[
x^{17}-1=(x-1)\prod_{i=1}^8 f_i(x).
\]

We now group this factorization according to the action
$
Q\longmapsto -2^TQ$.

\noindent
\textbf{Case $T=0$.}
Then $
\sigma_0(Q_i)=-Q_i=Q_i$ for all $i\in \{0,1,\dots,8\}$.  Hence,  every  $\sigma_0$-orbits have size $1$, namely
\[
\{Q_0\},\{Q_1\},\{Q_2\},\{Q_3\},\{Q_4\},\{Q_5\},\{Q_6\},\{Q_7\},\{Q_8\}.
\]
Therefore,
\[
x^{17}-1=(x-1)f_1(x)f_2(x)f_3(x)f_4(x)f_5(x)f_6(x)f_7(x)f_8(x),
\]
and every irreducible factor is $\theta$-self-reciprocal.

\noindent
\textbf{Case $T=1$.} Then 
$
\sigma_1(Q)=-2Q=2Q$. Thus
$
Q_1\mapsto Q_2\mapsto Q_4\mapsto Q_8\mapsto Q_1$,
and
$
Q_3\mapsto Q_6\mapsto Q_5\mapsto Q_7\mapsto Q_3$.
Hence, the $\sigma_1$-orbits are
\[
\{Q_0\},\quad \{Q_1,Q_2,Q_4,Q_8\},\quad \{Q_3,Q_5,Q_6,Q_7\}.
\]
Therefore,
\[
x^{17}-1
=
(x-1)\,
\bigl(f_1(x)f_2(x)f_4(x)f_8(x)\bigr)\,
\bigl(f_3(x)f_5(x)f_6(x)f_7(x)\bigr).
\]
In particular, the only $\theta$-self-reciprocal irreducible factor is $x-1$.

\noindent
\textbf{Case $T=2$.}
Then $
\sigma_2(Q)=-2^2Q=-4Q=4Q$.
Thus
$
Q_1\leftrightarrow Q_4, 
Q_2\leftrightarrow Q_8, 
Q_3\leftrightarrow Q_5, 
Q_6\leftrightarrow Q_7$.
Hence,  the $\sigma_2$-orbits are
\[
\{Q_0\},\{Q_1,Q_4\}, \{Q_2,Q_8\},
\{Q_3,Q_5\}, \{Q_6,Q_7\}.
\]
Therefore,
\[
x^{17}-1
=
(x-1)\,
\left(f_1(x)f_4(x)\right)\,\left(
f_2(x)f_8(x)\right)\, \left(
f_3(x)f_5(x)\right)\,
\left(f_6(x)f_7(x)\right).
\]
Thus,   the only $\theta$-self-reciprocal irreducible factor is again $x-1$.

\noindent
\textbf{Case $T=3$.}
Then 
$
\sigma_3(Q)=-2^3Q=-8Q=8Q$.
Thus,
$
Q_1\mapsto Q_8\mapsto Q_4\mapsto Q_2\mapsto Q_1$ 
and
$
Q_3\mapsto Q_7\mapsto Q_5\mapsto Q_6\mapsto Q_3$.
Hence, the $\sigma_3$-orbits are
\[
\{Q_0\}, \{Q_1,Q_2,Q_4,Q_8\}, \{Q_3,Q_5,Q_6,Q_7\}.
\]
Therefore,
\[
x^{17}-1
=
(x-1)\,
\bigl(f_1(x)f_2(x)f_4(x)f_8(x)\bigr)\,
\bigl(f_3(x)f_5(x)f_6(x)f_7(x)\bigr).
\]
Thus, the only $\theta$-self-reciprocal irreducible factor is again $x-1$.
\end{example}

\subsection{Galois LCD cyclic codes}

  Let
$
q=p^m$
be a prime power  and let $
n=n_0p^r$
with $r\ge 0$ and $\gcd(n_0,p)=1$. Then
\[
x^n-1=\bigl(x^{n_0}-1\bigr)^{p^r}.
\]
Let $\sigma$ be the permutation of the set of $q^k$-cyclotomic classes modulo $n_0$ defined by
$
\sigma(Q)=-q^TQ$.
Using Corollary~\ref{cor:factorization-separate}, write
\[
x^{n_0}-1=
\left(\prod_{Q\in \Omega_1} f_Q(x)\right)
\left(\prod_{Q\in \Omega_2}\ \prod_{R\in \mathcal O(Q)} f_R(x)\right),
\]
where $\Omega_1$ is the set of representatives of the $\sigma$-orbits of size $1$, and $\Omega_2$ is the set of representatives of the $\sigma$-orbits of size greater than $1$. Hence,
\begin{align} \label{xn-1-expan}
x^n-1=
\left(\prod_{Q\in \Omega_1} f_Q(x)^{p^r}\right)
\left(\prod_{Q\in \Omega_2}\ \prod_{R\in \mathcal O(Q)} f_R(x)^{p^r}\right).
\end{align}
For each $Q\in \Omega_2$, define
\[
F_Q(x):=\prod_{R\in\mathcal O(Q)} f_R(x).
\]
Then
$
F_Q^{\ast,\theta}(x)=F_Q(x)
$
and 
\begin{align}\label{eq-xn-1}
x^n-1=
\left(\prod_{Q\in \Omega_1} f_Q(x)^{p^r}\right)
\left(\prod_{Q\in \Omega_2} F_Q(x)^{p^r}\right).
\end{align}

We have the following characterization of 
Galois LCD codes.
\begin{theorem}\label{thm:lcd-rr}
A cyclic code $C=\langle g(x)\rangle$ of length $n=n_0p^r$ over $\F_{q^k}$ is Galois LCD if and only if
\begin{align}\label{gen-LCD}
g(x)=
\left(\prod_{Q\in J_1} f_Q(x)^{p^r}\right)
\left(\prod_{Q\in J_2} F_Q(x)^{p^r}\right)
\end{align}
for some subsets $J_1\subseteq \Omega_1$ and $J_2\subseteq \Omega_2$. Consequently, the number of Galois LCD cyclic codes of length $n$ over $\F_{q^k}$ is
\[
2^{|\Omega_1|+|\Omega_2|}.
\]
\end{theorem}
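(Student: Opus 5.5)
Write $g(x)$ through the factorization \eqref{eq-xn-1}: every monic divisor of $x^n-1$ has the form
\[
g(x)=\prod_{Q\in\Omega_1} f_Q(x)^{a_Q}\prod_{Q\in\Omega_2}\prod_{R\in\mathcal O(Q)} f_R(x)^{a_R},\qquad 0\le a_Q,a_R\le p^r .
\]
Then $h(x)=(x^n-1)/g(x)$ has exponent $p^r-a_R$ on each $f_R$. The first step is to compute $h^{\ast,\theta}$. The map $f\mapsto f^{\ast,\theta}$ is multiplicative on monic polynomials with nonzero constant term, up to the normalisation by the constant term. This holds because $\theta$ is a ring automorphism and $x^m\theta(f(x^{-1}))$ is multiplicative in $f$. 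Combining this with Theorem~\ref{thm:reciprocal-coset}, which gives $f_R^{\ast,\theta}=f_{\sigma(R)}$, we obtain
\[
h^{\ast,\theta}(x)=\prod_{R} f_{\sigma(R)}(x)^{p^r-a_R}.
\]
So the exponent of $f_R$ in $h^{\ast,\theta}$ is $p^r-a_{\sigma^{-1}(R)}$.

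The next step applies criterion \eqref{eq-LCD}. Since the $f_R$ are pairwise distinct irreducibles, $\gcd(g,h^{\ast,\theta})=1$ holds if and only if, for every class $R$, either $a_R=0$ or $a_{\sigma^{-1}(R)}=p^r$.

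\textbf{Necessity.} For $Q\in\Omega_1$ we have $\sigma(Q)=Q$, so the condition reads $a_Q=0$ or $a_Q=p^r$. For an orbit $\mathcal O(Q)$ with $Q\in\Omega_2$, suppose some $a_R>0$. Then $a_{\sigma^{-1}(R)}=p^r>0$. Iterating backwards along the cycle $\sigma^{-1}$, every class in the orbit gets exponent $p^r$. This iteration is the main point to check, and it is exactly where the orbit structure (rather than just pairs) enters. Hence on each orbit the exponents are either all $0$ or all $p^r$, so $g$ has the form \eqref{gen-LCD}.

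\textbf{Sufficiency.} If $g$ has the form \eqref{gen-LCD}, the exponents are constant ($0$ or $p^r$) on every $\sigma$-orbit. Hence whenever $a_R>0$, we have $a_{\sigma^{-1}(R)}=a_R=p^r$, and the gcd condition holds.

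\textbf{Counting.} Distinct pairs $(J_1,J_2)$ give distinct monic divisors $g$, by unique factorization and because the $f_R$ are distinct. Generator polynomials correspond bijectively to cyclic codes, so the number of Galois LCD cyclic codes is $2^{|\Omega_1|+|\Omega_2|}$.

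The only subtle point I expect is the repeated-root bookkeeping: making sure the $f_R$ over $\F_{q^k}$ for distinct classes $R$ modulo $n_0$ are pairwise coprime, and that the normalising constant in $^{\ast,\theta}$ does not affect the gcd. Both are immediate once everything is taken monic.
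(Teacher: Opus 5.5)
Your proposal is correct and follows the paper's proof essentially step for step: you expand $g$ over the orbit factorization, read off the exponent $p^r-a_{\sigma^{-1}(R)}$ of $f_R$ in $h^{\ast,\theta}$, reduce the gcd criterion to ``$a_R=0$ or $a_{\sigma^{-1}(R)}=p^r$'', and propagate backwards around each $\sigma$-orbit. The only cosmetic difference is in sufficiency: you check this pointwise condition directly, whereas the paper recomputes $h^{\ast,\theta}$ using the $\theta$-self-reciprocity of $f_Q$ and $F_Q$.
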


\begin{proof}
Write
\[
g(x)=
\left(\prod_{Q\in \Omega_1} f_Q(x)^{a_Q}\right)
\left(\prod_{Q\in \Omega_2}\ \prod_{R\in \mathcal O(Q)} f_R(x)^{a_R}\right),
\]
where $0\le a_Q,a_R\le p^r$.
Then
\[
h(x)=\frac{x^n-1}{g(x)}
=
\left(\prod_{Q\in \Omega_1} f_Q(x)^{p^r-a_Q}\right)
\left(\prod_{Q\in \Omega_2}\ \prod_{R\in \mathcal O(Q)} f_R(x)^{p^r-a_R}\right).
\]
Since each $Q\in \Omega_1$ is fixed by $\sigma$,  by Theorem~\ref{thm:reciprocal-coset}
\[
h^{\ast,\theta}(x)=
\left(\prod_{Q\in \Omega_1} f_Q(x)^{p^r-a_Q}\right)
\left(\prod_{Q\in \Omega_2}\ \prod_{R\in \mathcal O(Q)} f_R(x)^{p^r-a_{\sigma^{-1}(R)}}\right).
\]
By \eqref{eq-LCD}, the code $C$ is $\theta$-LCD if and only if
$
\gcd\bigl(g(x),h^{\ast,\theta}(x)\bigr)=1$.
Since the irreducible factors $f_R(x)$ are pairwise coprime, this holds if and only if
$
\min\{a_R,\;p^r-a_{\sigma^{-1}(R)}\}=0$ for every $R$.
Equivalently,
\[
a_R=0
\quad\text{or}\quad
a_{\sigma^{-1}(R)}=p^r
\]
for every $R$.

We now consider the two parts separately.
First, let $Q\in \Omega_1$. Since $\sigma(Q)=Q$, the above condition becomes
$
a_Q=0 \text{ or }
a_Q=p^r$.
Hence, each factor $f_Q(x)$ with $Q\in \Omega_1$ occurs either with multiplicity $0$ or with multiplicity $p^r$.

Next, let $Q\in \Omega_2$, and write
$
\mathcal O(Q)=\{Q_0,Q_1,\dots,Q_{m-1}\}
$,  $ \sigma(Q_i)=Q_{i+1}$
 where the indices are computed modulo $m$.
Then the condition above becomes
$
a_{Q_i}=0 $ or $
a_{Q_{i-1}}=p^r$ 
 for all $i\in \{0,1,\dots, m-1\}$.
If all exponents on the orbit are zero, then the orbit contributes nothing to $g(x)$. Suppose now that at least one exponent is $
a_{Q_i}>0$
for some $i$. Then necessarily
$
a_{Q_{i-1}}=p^r>0$. Using the same argument, we conclude that $
a_{Q_{i-2}}=p^r$.
Repeating this process around the orbit, we obtain
\[
a_{Q_0}=a_{Q_1}=\cdots=a_{Q_{m-1}}=p^r.
\]
Therefore, for each orbit represented by $Q\in \Omega_2$, either all exponents are $0$ or all exponents are $p^r$. Hence, the contribution of such an orbit is either $1$ or
$
F_Q(x)^{p^r}$, where 
$F_Q(x)=\prod_{R\in \mathcal O(Q)} f_R(x)$.
It follows that
\[
g(x)=
\left(\prod_{Q\in J_1} f_Q(x)^{p^r}\right)
\left(\prod_{Q\in J_2} F_Q(x)^{p^r}\right)
\]
for some subsets $J_1\subseteq \Omega_1$ and $J_2\subseteq \Omega_2$.

Conversely, assume that
\[
g(x)=
\left(\prod_{Q\in J_1} f_Q(x)^{p^r}\right)
\left(\prod_{Q\in J_2} F_Q(x)^{p^r}\right)
\]
for some subsets $J_1\subseteq \Omega_1$ and $J_2\subseteq \Omega_2$. Then
\[
h(x)=
\left(\prod_{Q\in \Omega_1\setminus J_1} f_Q(x)^{p^r}\right)
\left(\prod_{Q\in \Omega_2\setminus J_2} F_Q(x)^{p^r}\right).
\]
Since each $f_Q(x)$ with $Q\in\Omega_1$ and each $F_Q(x)$ with $Q\in\Omega_2$ is $\theta$-self-reciprocal, we have
\[
h^{\ast,\theta}(x)=
\left(\prod_{Q\in \Omega_1\setminus J_1} f_Q(x)^{p^r}\right)
\left(\prod_{Q\in \Omega_2\setminus J_2} F_Q(x)^{p^r}\right).
\]
Thus, $g(x)$ and $h^{\ast,\theta}(x)$ have no common irreducible factor, which implies that $
\gcd\bigl(g(x),h^{\ast,\theta}(x)\bigr)=1$.
By \eqref{eq-LCD}, the code $C$ is $\theta$-LCD.

Finally, each factor indexed by $\Omega_1$ and each orbit product indexed by $\Omega_2$ can be chosen independently. Hence, the total number of $\theta$-LCD cyclic codes is
\[
2^{|\Omega_1|+|\Omega_2|}
\]as desired.
\end{proof}
 
From Theorem \ref{thm:lcd-rr}, it  is equivalent to, each $\theta$-self-reciprocal irreducible factor $f_Q(x)$ with $Q\in\Omega_1$, and each orbit product $F_Q(x)$ with $Q\in\Omega_2$, occurs either with multiplicity $0$ or with multiplicity $p^r$.

 \begin{example}
Let $q=2$, $k=4$, and $T=1$. Then
$
\theta(a)=a^{q^T}=a^2$ for all $a\in \F_{16}$.
By Example~\ref{ex:x17minus1-F16},
\[
x^{17}-1
=
(x-1)\,A(x)\,B(x),
\]
where
$
A(x)=f_1(x)f_2(x)f_4(x)f_8(x)$ and $
B(x)=f_3(x)f_5(x)f_6(x)f_7(x)$.
Hence,
\[
x^{34}-1=(x^{17}-1)^2=(x-1)^2A(x)^2B(x)^2.
\]

For $T=1$, the $\sigma_1$-orbits are
$
\{Q_0\}$, $\{Q_1,Q_2,Q_4,Q_8\}$, and $\{Q_3,Q_5,Q_6,Q_7\}$ which implies that
$
\Omega_1=\{Q_0\} $ and $
\Omega_2=\{Q_1,Q_3\}$.
Therefore, by Theorem~\ref{thm:lcd-rr}, a cyclic code of length $34$ over $\F_{16}$ is $\theta$-LCD if and only if its generator polynomial has the form
\[
g(x)=(x-1)^{\varepsilon_0}A(x)^{\varepsilon_1}B(x)^{\varepsilon_2},
\]
where $\varepsilon_0,\varepsilon_1,\varepsilon_2\in\{0,2\}$.

For instance, the cyclic codes of length $34$ over $\F_{16}$ generated by
\[
(x-1)^2\bigl(f_1(x)f_2(x)f_4(x)f_8(x)\bigr)^2 \text{ and  } \bigl(f_1(x)f_2(x)f_4(x)f_8(x)\bigr)^2 \bigl(f_3(x)f_5(x)f_6(x)f_7(x)\bigr)^2\]
are  $\theta$-LCD.

Moreover,  the number of $\theta$-LCD cyclic codes of length $34$ over $\F_{16}$ is
\[
2^{|\Omega_1|+|\Omega_2|}=2^{1+2}=8
\] since $|\Omega_1|=1$ and $|\Omega_2|=2$.
\end{example}

 \subsection{Galois self-dual  cyclic codes}

For integers $0\le T<k$, let $
\theta(a)=a^{q^T} $
be the corresponding Galois automorphism of $\F_{q^k}$. Recall that a cyclic code $C$ is  {Galois self-dual}, or simply {$\theta$-self-dual} if $
C=C^{\perp_\theta}$.

  Let
$
n=n_0p^r$ 
with $r\ge 0$ and $\gcd(n_0,p)=1$. The following lemma shows that $\theta$-self-dual cyclic codes can occur only in the repeated-root case over fields of even characteristic.

\begin{lemma}\label{lem:selfdual-even}
There exists a $\theta$-self-dual cyclic code of length $n$ over $\F_{q^k}$ if and only if both $q$ and $n$ are even. In particular, simple root $\theta$-self-dual cyclic codes do not exist.
\end{lemma}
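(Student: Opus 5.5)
The plan is to argue both directions through the generator polynomial criterion \eqref{eq-SD}, namely $g(x)=h^{\ast,\theta}(x)$, together with the factorization \eqref{xn-1-expan} of $x^n-1=(x^{n_0}-1)^{p^r}$. A preliminary fact, used in both directions, is that the $\theta$-reciprocal is multiplicative: $(uv)^{\ast,\theta}=u^{\ast,\theta}v^{\ast,\theta}$ for polynomials with nonzero constant terms. This follows directly from the definition, because $\theta$ is a ring automorphism and $x\mapsto x^{-1}$ is multiplicative. Consequently, the exponent computation already carried out in the proof of Theorem~\ref{thm:lcd-rr} applies verbatim. If $g=\prod_R f_R^{a_R}$ with $0\le a_R\le p^r$, then
\[
h^{\ast,\theta}=\prod_R f_R^{\,p^r-a_{\sigma^{-1}(R)}}.
\]

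\emph{Necessity.} Suppose $C=\langle g(x)\rangle$ is $\theta$-self-dual. I would focus on the single class $Q_0=\{0\}$. It is fixed by $\sigma$, since $-q^T\cdot 0=0$, and $f_{Q_0}(x)=x-1$. Comparing the exponent of $x-1$ on the two sides of $g=h^{\ast,\theta}$ gives $a_{Q_0}=p^r-a_{Q_0}$, that is, $2a_{Q_0}=p^r$. Hence $p^r$ is even, which forces $p=2$ and $r\ge 1$. So $q=p^m$ is even and $n=n_0 2^r$ is even. In the simple root case $r=0$ we have $p^r=1$, which is odd, so no such code exists; this gives the ``in particular'' clause. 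The only delicate point is that the factors $f_R$ are pairwise coprime irreducibles, so that exponents may be compared; this holds because the classes partition $\Z_{n_0}$ and $\gcd(n_0,q)=1$.

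\emph{Sufficiency.} Assume $q$ and $n$ are even, so $p=2$ and $r\ge1$. I would take
\[
g(x)=(x^{n_0}-1)^{2^{r-1}},
\]
so that $h(x)=(x^n-1)/g(x)=(x^{n_0}-1)^{2^{r-1}}$ as well. The definition then gives directly that $(x^{n_0}-1)^{\ast,\theta}=\theta(-1)^{-1}x^{n_0}\bigl(x^{-n_0}-1\bigr)=x^{n_0}-1$. By multiplicativity, $h^{\ast,\theta}=g$, so $C=\langle g\rangle$ is $\theta$-self-dual by \eqref{eq-SD}. Equivalently, in the language of \eqref{xn-1-expan}, this choice takes every exponent $a_R=2^{r-1}=p^r/2$, which satisfies $a_R=p^r-a_{\sigma^{-1}(R)}$ on every orbit.

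I expect no serious obstacle. The step needing the most care is justifying the multiplicativity of $\ast,\theta$ together with the exponent-matching formula for $h^{\ast,\theta}$, and both are inherited from the proof of Theorem~\ref{thm:lcd-rr}.
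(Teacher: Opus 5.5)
Your proof is correct and follows essentially the same route as the paper. For necessity, you compare the exponent of the $\theta$-self-reciprocal factor $x-1$ on both sides of $g=h^{\ast,\theta}$ to get $2a_{Q_0}=p^r$. For sufficiency, your generator $(x^{n_0}-1)^{2^{r-1}}$ is, in characteristic $2$, exactly the paper's $x^{n/2}-1$; you check $g^{\ast,\theta}=g$ via multiplicativity, whereas the paper notes that $\theta$ fixes the prime-field coefficients.
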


\begin{proof}
Assume that  there exists a $\theta$-self-dual cyclic code $
C=\langle g(x)\rangle$ 
of length $n$ over $\F_{q^k}$. Let
$
h(x)=\frac{x^n-1}{g(x)}$.
By \eqref{eq-SD}, we have 
$
g(x)=h^{\ast,\theta}(x)$. Since  the factor $x-1$ is always $\theta$-self-reciprocal,  the exponent of $x-1$ in $g(x)$ must be equal to the exponent of $x-1$ in $h^{\ast,\theta}(x)$ which is 
$
\frac{p^r}{2}.
$
Therefore, $p^r$ must be even. Preciselt,  $p=2$ and $r\geq 1$. Since $q$ is a power of $p=2$, it follows that $q$ is even.  Therefore, $n=n_0p^r$ is even. In particular, simple root $\theta$-self-dual cyclic codes do not exist.

Conversely, assume that both $q$ and $n$ are even. Then  $\F_{q^k}$ has characteristic $2$. Since $n$ is even, the polynomial
\[
g(x)=x^{n/2}-1
\]
is  a well-defined monic divisor of  $x^n-1$.  
Since  the characteristic is $2$, we have
\[
x^n-1=x^n+1=(x^{n/2}+1)^2=(x^{n/2}-1)^2
\]
and 
\[
h(x)=\frac{x^n-1}{g(x)}=x^{n/2}-1=g(x).
\]
Sine $\theta$ acts trivially on the subfield $\F_q$,
\[
h^{\ast,\theta}(x)=g^{\ast,\theta}(x)=g(x).
\] 
By \eqref{eq-SD}, the cyclic code $C=\langle g(x)$  of length $n$ over $\F_{q^k}$ is $\theta$-self-dual. 
\end{proof}

We now restrict the  attention to repeated-root cyclic codes. By Lemma~\ref{lem:selfdual-even}, $\theta$-self-dual cyclic codes can exist only when $q$ is even. Hence, throughout this subsection, we write $
q=2^\nu $ and $
n=n_02^r$,
where $\nu\ge1$, $r\ge 1$ and $n_0$ is odd. 
The next theorem gives a      characterization for the generator polynomial of $\theta$-self-dual  cyclic codes.

\begin{theorem}\label{thm:rr-selfdual-general} Let $
q=2^\nu $ and $
n=n_02^r$ for some  $\nu\ge 1$, $r\ge 1$ and $n_0$ is odd. 
Let  $g(x)$ be a monic divisor of $x^n-1$ over  in \eqref{xn-1-expan} of the form
\[
g(x)=
\left(\prod_{Q\in \Omega_1} f_Q(x)^{a_Q}\right)
\left(\prod_{Q\in \Omega_2}\ \prod_{R\in \mathcal O(Q)} f_R(x)^{a_R}\right),
\]
where $
0\le a_Q,a_R\le 2^r$.
Then the cyclic code
$
C=\langle g(x)\rangle$  of length  $n$ over $\F_{q^k}$
is $\theta$-self-dual if and only if
\[
a_Q=2^{r-1},
\]
for every $Q\in \Omega_1$, and
\[
a_R=2^r-a_{\sigma^{-1}(R)}
\]
for every $R$ in 
  a $\sigma$-orbit represented by some element of $\Omega_2$.
\end{theorem}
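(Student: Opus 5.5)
The plan follows the proof of Theorem~\ref{thm:lcd-rr} closely. I compute $h(x)$ and $h^{\ast,\theta}(x)$ explicitly in terms of the exponents $a_Q,a_R$, and then compare multiplicities of the pairwise coprime irreducible factors $f_R(x)$ using the criterion \eqref{eq-SD}.

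First I would write
\[
h(x)=\frac{x^n-1}{g(x)}=\left(\prod_{Q\in \Omega_1} f_Q(x)^{2^r-a_Q}\right)\left(\prod_{Q\in \Omega_2}\prod_{R\in \mathcal O(Q)} f_R(x)^{2^r-a_R}\right),
\]
which uses the factorization \eqref{xn-1-expan} with $p^r=2^r$. Next I need that the $\theta$-reciprocal is multiplicative on polynomials with nonzero constant term, i.e.\ $(uv)^{\ast,\theta}=u^{\ast,\theta}v^{\ast,\theta}$. This holds because $\theta$ is a ring automorphism applied coefficientwise, degrees add, and constant terms multiply. Combined with Theorem~\ref{thm:reciprocal-coset}, which gives $f_R^{\ast,\theta}=f_{\sigma(R)}$, this yields
\[
h^{\ast,\theta}(x)=\left(\prod_{Q\in \Omega_1} f_Q(x)^{2^r-a_Q}\right)\left(\prod_{Q\in \Omega_2}\prod_{R\in \mathcal O(Q)} f_{R}(x)^{2^r-a_{\sigma^{-1}(R)}}\right).
\]
Here I reindex by $R\mapsto\sigma(R)$ inside each orbit. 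This is legitimate because $\sigma$ permutes each orbit $\mathcal O(Q)$.

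Both $g$ and $h^{\ast,\theta}$ are monic; $h^{\ast,\theta}$ is monic by construction, since the factor $\theta(f_0)^{-1}$ normalizes the leading coefficient. The $f_R$ are distinct monic irreducibles, since distinct cyclotomic classes give disjoint root sets. By unique factorization in $\F_{q^k}[x]$, the equality $g=h^{\ast,\theta}$ therefore holds if and only if the exponents of each $f_R$ agree.

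For $Q\in\Omega_1$, matching exponents gives $a_Q=2^r-a_Q$, i.e.\ $a_Q=2^{r-1}$; this is an integer since $r\ge1$. For $R$ in an orbit represented in $\Omega_2$, it gives $a_R=2^r-a_{\sigma^{-1}(R)}$. Both directions of the equivalence follow at once, because every step above is reversible.

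I expect the only real obstacle to be justifying the formula for $h^{\ast,\theta}$. This requires checking multiplicativity of ${}^{\ast,\theta}$, including the normalizing constant, and making the reindexing over orbits precise. Everything else is exponent comparison.
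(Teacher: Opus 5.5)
Your proposal is correct and follows essentially the same route as the paper: write $h$ with exponents $2^r-a_Q$ and $2^r-a_R$, use Theorem~\ref{thm:reciprocal-coset} to obtain $h^{\ast,\theta}$ with exponents $2^r-a_Q$ and $2^r-a_{\sigma^{-1}(R)}$, and then compare multiplicities of the pairwise coprime irreducible factors in $g=h^{\ast,\theta}$. Your explicit checks that ${}^{\ast,\theta}$ is multiplicative and produces monic polynomials, and that the reindexing over orbits is valid, make precise steps the paper leaves implicit.
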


\begin{proof}
Let
\[
h(x)=\frac{x^n-1}{g(x)}.
\]
Then
\[
h(x)=
\left(\prod_{Q\in \Omega_1} f_Q(x)^{2^r-a_Q}\right)
\left(\prod_{Q\in \Omega_2}\ \prod_{R\in \mathcal O(Q)} f_R(x)^{2^r-a_R}\right).
\]
By Theorem~\ref{thm:reciprocal-coset},
\[
h^{\ast,\theta}(x)=
\left(\prod_{Q\in \Omega_1} f_Q(x)^{2^r-a_Q}\right)
\left(\prod_{Q\in \Omega_2}\ \prod_{R\in \mathcal O(Q)} f_R(x)^{2^r-a_{\sigma^{-1}(R)}}\right),
\]
since every $Q\in\Omega_1$ is fixed by $\sigma$.

First,  assume first that $C$ is $\theta$-self-dual. By \eqref{eq-SD},
\[
g(x)=h^{\ast,\theta}(x).
\]
Comparing exponents of the pairwise coprime irreducible factors, we obtain
$
a_Q=2^r-a_Q$ for every $Q\in\Omega_1$, and 
$
a_R=2^r-a_{\sigma^{-1}(R)}$ 
 for every $R$
in  a $\sigma$-orbit represented by some element of $\Omega_2$. From the first equality, we have 
$ 
a_Q=2^{r-1}$ for every $ Q\in\Omega_1.
$

Conversely, assume that
$
a_Q=2^{r-1} $ for every $Q\in\Omega_1$,
and
$
a_R=2^r-a_{\sigma^{-1}(R)}$ 
 for every $R$
in  a $\sigma$-orbit represented by some element of $\Omega_2$. Then, substituting these equalities into the expression for $h^{\ast,\theta}(x)$, we obtain
\[
h^{\ast,\theta}(x)=
\left(\prod_{Q\in \Omega_1} f_Q(x)^{a_Q}\right)
\left(\prod_{Q\in \Omega_2}\ \prod_{R\in \mathcal O(Q)} f_R(x)^{a_R}\right)
=
g(x).
\]
Hence, by \eqref{eq-SD}, the cyclic code $C$ is $\theta$-self-dual.
\end{proof}

\begin{remark}\label{rem:orbit-parity}
Let $Q\in\Omega_2$ and write
$
\mathcal O(Q)=\{Q_0,Q_1,\dots,Q_{m-1}\}
$
with
$
\sigma(Q_i)=Q_{i+1}
$, where the subscripts are computed modulo $m$.
Then Theorem~\ref{thm:rr-selfdual-general} is equivalent to
\[
a_{Q_i}=2^r-a_{Q_{i-1}}
\]  {for all } $i\in \{0,1,\dots, m-1\}$. 
Thus, if $m$ is even, the exponents alternate between two complementary values
\[
b,\ 2^r-b,\ b,\ 2^r-b,\dots, b,\ 2^r-b.
\]
If $m$ is odd, then going once around the orbit forces
\[
a_{Q_i}=2^{r-1} 
\]{for all } $i\in \{0,1,\dots, m-1\}$.
\end{remark}

The following corollary makes the generator polynomial explicit in the involutory case.

\begin{corollary}\label{cor:rr-selfdual-involutory} Let  $
q=2^\nu $ for some $nu\ge 1$ and let $k$ and $T$ be integers such that $0\leq T<k$.
Assume that the map
$
Q\mapsto -q^TQ
$
is an involution on the set of $q^k$-cyclotomic classes modulo $n_0$. Then every orbit represented by an element of $\Omega_2$ has size $2$. A repeated-root cyclic code $
C=\langle g(x)\rangle$ 
of length $n=n_02^r$ over $\F_{q^k}$ is Galois self-dual if and only if
\[
g(x)=
\left(\prod_{Q\in \Omega_1} f_Q(x)^{2^{r-1}}\right)
\left(\prod_{Q\in \Omega_2} f_Q(x)^{b_Q}f_{\sigma(Q)}(x)^{2^r-b_Q}\right),
\] where $0\le b_Q\le 2^r$.
Consequently, the number of Galois self-dual repeated-root cyclic codes of length $n$ over $\F_{q^k}$ is
$
(2^r+1)^{|\Omega_2|}$.
\end{corollary}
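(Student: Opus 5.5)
This follows by specializing Theorem~\ref{thm:rr-selfdual-general} and Remark~\ref{rem:orbit-parity} to orbits of size at most $2$. First, orbit sizes: since $\sigma$ is an involution, $\sigma^2$ is the identity, so every $\sigma$-orbit has size $1$ or $2$. By definition the representatives in $\Omega_2$ have orbits of size greater than $1$, so each such orbit is exactly $\{Q,\sigma(Q)\}$ with $\sigma(Q)\neq Q$. Consequently $\sigma^{-1}=\sigma$ on these orbits.

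Next, the generator. I would apply Theorem~\ref{thm:rr-selfdual-general}. For $Q\in\Omega_1$ the exponent is forced to be $a_Q=2^{r-1}$. For an orbit $\{Q,\sigma(Q)\}$ with $Q\in\Omega_2$, the theorem requires
\[
a_{\sigma(Q)}=2^r-a_{\sigma^{-1}(\sigma(Q))}=2^r-a_Q
\quad\text{and}\quad
a_Q=2^r-a_{\sigma^{-1}(Q)}=2^r-a_{\sigma(Q)}.
\]
These two equations are the same condition. Setting $b_Q=a_Q$, any value $0\le b_Q\le 2^r$ is admissible, and it determines $a_{\sigma(Q)}=2^r-b_Q$; this is the even-length case $m=2$ of Remark~\ref{rem:orbit-parity}. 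Both directions of the equivalence then come directly from the theorem: a polynomial of the displayed form satisfies its exponent conditions, and conversely.

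Finally, counting. Distinct exponent vectors give distinct monic divisors $g(x)$, by unique factorization into the pairwise coprime irreducibles $f_R$. They therefore give distinct codes. The exponents on $\Omega_1$ are fixed, and each $Q\in\Omega_2$ contributes an independent choice of $b_Q$ from $2^r+1$ values. This yields exactly $(2^r+1)^{|\Omega_2|}$ codes.

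There is no real obstacle here. The only point needing care is that the choice of representative $Q$ versus $\sigma(Q)$ in $\Omega_2$ does not cause double counting. Replacing $Q$ by $\sigma(Q)$ simply changes $b$ to $2^r-b$, which is a bijection on $\{0,\dots,2^r\}$, so the count is unaffected.
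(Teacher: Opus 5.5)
Your proposal is correct and follows the paper's proof. Both arguments specialize Theorem~\ref{thm:rr-selfdual-general} to $\sigma$-orbits of size at most $2$, reduce the condition on each orbit $\{Q,\sigma(Q)\}$ to $a_{\sigma(Q)}=2^r-a_Q$, set $b_Q=a_Q$, and count $2^r+1$ independent choices for each element of $\Omega_2$. Your extra remarks (the two relations on a $2$-orbit coincide, and distinct exponent vectors give distinct generator polynomials, hence distinct codes) simply make explicit what the paper leaves implicit.
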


\begin{proof} Since  $
Q\mapsto -q^TQ
$
is an involution, it is clear that 
 every orbit represented by an element of $\Omega_2$ has size $2$. The general condition in Theorem~\ref{thm:rr-selfdual-general} reduces to $
a_Q=2^{r-1} $ for all $Q\in\Omega_1$,
and $
a_{\sigma(Q)}=2^r-a_Q
 $ for all $Q\in\Omega_2$.
Writing $b_Q=a_Q$ gives the asserted form of the generator polynomial.

Conversely, any polynomial of the stated form satisfies
$
g(x)=h^{\ast,\theta}(x),
$
and hence, the corresponding code is Galois self-dual by \eqref{eq-SD}.

Finally, each orbit represented by an element of $\Omega_2$ contributes exactly $2^r+1$ choices for the exponent $b_Q$. Therefore, the total number of such codes is $
(2^r+1)^{|\Omega_2|}$.
\end{proof}

\begin{example}\label{ex:theta-selfdual-34-T1}
Let $q=2$, $k=4$, and $T=1$. Then $
\theta(a)=a^{q^T}=a^2
$ for all $ a\in \F_{16}$.
By Example~\ref{ex:x17minus1-F16}, the $\sigma_1$-orbits on the $2^4$-cyclotomic classes modulo $17$ are
$
\{Q_0\}, \{Q_1,Q_2,Q_4,Q_8\}$, and $ \{Q_3,Q_5,Q_6,Q_7\}$.
Hence, $
\Omega_1=\{Q_0\}$ and $
\Omega_2=\{Q_1,Q_3\}$. It follows that 
\[
x^{17}-1=(x-1)\,A(x)\,B(x),
\]
where
$
A(x)=f_1(x)f_2(x)f_4(x)f_8(x)$ and $ 
B(x)=f_3(x)f_5(x)f_6(x)f_7(x)$ defined in Example~\ref{ex:x17minus1-F16}.
Therefore,
\[
x^{34}-1=(x^{17}-1)^2=(x-1)^2A(x)^2B(x)^2.
\]
Since the orbit sizes in $\Omega_2$ are $4$, Corollary~\ref{cor:rr-selfdual-involutory} does not apply. Instead, by Theorem~\ref{thm:rr-selfdual-general},
\[
a_{Q_0}=1,
\]
and on each orbit of length $4$, the exponents satisfy
$
a_{Q_i}=2-a_{Q_{i-1}}$.
For example, the choices
$
(a_{Q_1},a_{Q_2},a_{Q_4},a_{Q_8})=(0,2,0,2)
$
and
$
(a_{Q_3},a_{Q_6},a_{Q_5},a_{Q_7})=(1,1,1,1)
$
satisfy this condition. Hence
\[
g(x)=(x-1)\,f_2(x)^2f_8(x)^2f_3(x)f_5(x)f_6(x)f_7(x)
\]
generates a $\theta$-self-dual cyclic code of length $34$ over $\F_{16}$.
\end{example}

\begin{example}\label{ex:theta-selfdual-34-T2}
Let $q=2$, $k=4$, and $T=2$. Then
$
\theta(a)=a^{q^T}=a^4$
for all $a\in \F_{16}$.
By Example~\ref{ex:x17minus1-F16}, the $\sigma_2$-orbits are
$
\{Q_0\}$, $\{Q_1,Q_4\}$, $ \{Q_2,Q_8\}$ $ \{Q_3,Q_5\}$,  and $\{Q_6,Q_7\}$.
Thus, the involutory case applies, since each nontrivial orbit has size $2$. From Example~\ref{ex:x17minus1-F16}, we have 
\[
x^{34}-1=(x-1)^2\prod_{i=1}^8 f_i(x)^2.
\]
By Corollary~\ref{cor:rr-selfdual-involutory}, every Galois self-dual cyclic code of length $34$ over $\F_{16}$ has generator polynomial of the form
\[
g(x)
=
(x-1)
f_1(x)^{b_1}f_4(x)^{2-b_1}
f_2(x)^{b_2}f_8(x)^{2-b_2}
f_3(x)^{b_3}f_5(x)^{2-b_3}
f_6(x)^{b_4}f_7(x)^{2-b_4},
\]
where
$
0\le b_1,b_2,b_3,b_4\le 2$.
For example, choosing
$
b_1=b_2=b_3=b_4=1$,
it follows that the Galois self-dual cyclic code of length $34$ over $\F_{16}$ generated by
\[
g(x)=(x-1)\,f_1(x)f_4(x)f_2(x)f_8(x)f_3(x)f_5(x)f_6(x)f_7(x)=x^{17}-1.
\]
Moreover, the number of $\theta$-self-dual cyclic codes of length $34$ over $\F_{16}$ is
$
(2^1+1)^4=3^4=81$.
\end{example}

\section{Conclusion}
\label{sec6}

In this paper, a new family of good integers, namely the $(T,k)$-good integers with respect to $(a,b)$, has been introduced and studied. This family extends the classical notions of good, oddly-good, and evenly-good integers, and provides a   arithmetic framework for divisibility conditions of the form
\[
d\mid \bigl(a^{ks+T}+b^{ks+T}\bigr).
\]
A self-contained arithmetic theory has been developed, including local criteria at odd prime powers, a global characterization of odd integers in terms of $2$-adic valuations of multiplicative orders, a treatment of the even case, and an explicit algorithm for deciding membership in $G_{(T,k)}(a,b)$ and constructing a corresponding exponent.

The specialization $(a,b)=(q,1)$ shows that this arithmetic is closely connected with the structure of $q^k$-cyclotomic classes. In particular, the membership condition
$
d\in G_{(T,k)}(q,1)$
determines when a $q^k$-cyclotomic class is stable under the action $
a\mapsto -q^Ta$.
This orbit  interpretation leads naturally to applications in coding theory. In the cyclic case,  we present a characterization of $\theta$-self-reciprocal irreducible factors of $x^n-1$ over $\F_{q^k}$, and consequently explicit generator-polynomial descriptions and enumeration formulas for simple root $\theta$-self-dual and $\theta$-LCD cyclic codes. These applications  have been developed at the level of general theory. It would be of further interest to focus on specific code lengths and classes of codes for which the arithmetic properties of $G_{(T,k)}(q,1)$ give more explicit structural descriptions and sharper enumeration formulas.

The abelian group formulation developed in this paper indicates that the same arithmetic framework extends beyond cyclic codes. This suggests a natural direction for further work on  abelian codes in group algebras $\F_{q^k}[G]$, where $q^k$-cyclotomic classes in finite abelian groups play the role of the corresponding cyclotomic classes modulo $n$. It would also be of interest to investigate analogous questions for repeated-root settings, constacyclic and quasi-abelian codes, and other families of Galois dual codes.

\bmhead{Acknowledgements}
This research was supported by the National Research Council of Thailand and Silpakorn University under Research Grant N42A650381.

\section*{Declarations}

\begin{itemize}
\item Funding:  This research was supported by the National Research Council of Thailand and Silpakorn University under Research Grant N42A650381.
\item Conflict of interest/Competing interests:  The authors declare no potential conflict of interest. 
\item Ethics approval and consent to participate: Not applicable.
\item Consent for publication: The  authors have read and approved the final manuscript and consent to its publication.
\item Data availability: Data sharing is not applicable to this article as no datasets were generated or analyzed during the current study.
\item Materials availability:  Not applicable.
\item Code availability:  Not applicable.
\item Author contribution:  S.~Jitman was responsible for the conceptualization, methodology, formal analysis, proofs, algorithm design, writing of the manuscript, and partial investigation in Section~4. P.~Boonsuriyatham contributed to the investigation in Section~4. Both authors verified the final version of the manuscript and approved it for publication.
 
\end{itemize}

\end{document}